\numberwithin{equation}{section}
\newtheorem{theorem}{Theorem}[section]
\newtheorem{proposition}[theorem]{Proposition}
\newtheorem{lemma}[theorem]{Lemma}
\newtheorem{corollary}[theorem]{Corollary}
\newtheorem{theorem*}{Theorem}
\theoremstyle{definition}
\newtheorem{definition}[theorem]{Definition}
\theoremstyle{remark}
\newtheorem{remark}[theorem]{Remark}
\def\im{\mathop{\rm Im}\nolimits}
\def\ker{\mathop{\rm Ker}\nolimits}
\def\coker{\mathop{\rm Coker}\nolimits}
\def\Mod{\mathop{\rm Mod}\nolimits}
\def\wfd{\mathop{\rm wfd}\nolimits}
\def\wid{\mathop{\rm wid}\nolimits}
\def\id{\mathop{\rm id}\nolimits}
\def\pd{\mathop{\rm pd}\nolimits}
\def\inf{\mathop{\rm inf}\nolimits}
\def\Hom{\mathop{\rm Hom}\nolimits}
\def\Ext{\mathop{\rm Ext}\nolimits}
\def\Tor{{\rm Tor}}
\def\ra{\rightarrow}
\def\lra{\longrightarrow}
\def\lim{\mathop{\underrightarrow{\rm lim}}\nolimits}
\title{ \bf Foxby equivalence relative to $C$-weak injective and $C$-weak flat modules
\thanks{2010 Mathematics Subject Classification: 18G05,16E30,18G20}
\thanks{Keywords: (faithfully) semidualizing bimodule, Auslander class, Bass class,
$C$-weak injective module, $C$-weak flat module, Foxby equivalence, cover, preenvelope.}}
\author{Zenghui Gao\thanks{E-mail address: gaozenghui@cuit.edu.cn}, \ Tiwei Zhao\thanks{E-mail address:  tiweizhao@hotmail.com} \\
{\it \scriptsize  1. College of Applied Mathematics, Chengdu University of Information Technology, Chengdu 610225, P.R. China}\\
{\it \scriptsize  2. Department of Mathematics, Nanjing University, Nanjing 210093,  P.R. China}}
\date{ }
\begin{document}

\baselineskip=16pt
\maketitle

\begin{abstract}
Let $S$ and $R$ be rings and $_SC_R$ a (faithfully) semidualizing bimodule.
We introduce and study $C$-weak flat and $C$-weak injective modules as a generalization of  $C$-flat and $C$-injective modules
(J. Math. Kyoto Univ. {\bf 47}(2007), 781--808) respectively, and use them to provide additional information concerning
the important Foxby equivalence between the subclasses of the Auslander class $\mathcal{A}_C(R)$ and that of the
Bass class $\mathcal{B}_C(S)$. Then we study the stability of Auslander and Bass classes, which enables us to
give some alternative characterizations of the modules in  $\mathcal{A}_C(R)$ and  $\mathcal{B}_C(S)$.
Finally we consider an open question which is closely relative to the main results (Proc. Edinb. Math. Soc. {\bf 48}(2005), 75--90),
and discuss the relationship between the Bass class $\mathcal{B}_C(S)$ and the class of Gorenstein injective modules.
\end{abstract}

\pagestyle{myheadings}
\markboth{\rightline {\scriptsize  Z. Gao, T. Zhao}}
         {\leftline{\scriptsize  Foxby equivalence relative to $C$-weak injective and $C$-weak flat modules}}

\section*{Introduction}

Over a commutative Noetherian ring $R$, a finitely generated $R$-module
$C$ is \textit{semidualizing} if the natural homothety morphism $R\ra \Hom_R(C,C)$ is an isomorphism
and $\Ext^i_R(C,C)=0$ for all $i\geq 1$. Semidualizing modules (under different names) were independently studied by Foxby, Golod and Vasconcelos (see \cite{Fo73,Go84,Va74}).
In \cite{Ch01}, Christensen extended this notion to semidualizing complexes.
Araya, Takahashi and Yoshino \cite{ATY05} extended the notion of semidualizing modules to a pair of non-commutative, but Noetherian rings.
Furthermore, Holm and White in \cite{HW07} generalized the notion of
a semidualizing module to general associative rings, and defined and
studied Auslander and Bass classes with respect to a semidualizing
bimodule $C$. They obtained some beautiful characterizations of the
modules in the Auslander and Bass classes in terms of $C$-injective,
$C$-projective and $C$-flat modules and showed Foxby equivalence
between the subclasses of the Auslander class and that of the Bass
class. In particular, it was proven in \cite[Lemma 4.1]{HW07} that
the Auslander class $\mathcal{A}_C(R)$ contains all flat left
$R$-modules and the Bass class $\mathcal{B}_C(S)$ contains all
injective left $S$-modules. Recently, Bennis et al. continued a
study of homological notions relative to an extension of a
semidualizing module (see \cite{BGO16a,BGO16b,BGO16c}).

More recently, Gao and his coauthors introduced and studied in \cite{GH15a,GW15} a
generalization of injective and flat modules, named weak injective
and weak flat modules respectively, and generalized many homological
results from coherent rings to arbitrary rings. In this process
finitely presented modules are replaced by super finitely presented
modules. In \cite{BGH14}, Bravo, Gillespie and Hovey described
how Gorenstein homological algebra should work for general rings,
and the weak injective and weak flat modules were also called
$FP_\infty$-injective (or absolutely clean) and level modules
respectively. Following the above philosophy, the following question
naturally arises in this situation: \vspace{0.1cm}

{\bf Question 1.} Is it true that the Auslander class $\mathcal{A}_C(R)$
contains all weak flat left $R$-modules and the Bass class $\mathcal{B}_C(S)$
contains all weak injective left $S$-modules? \vspace{0.1cm}

In \cite{EJL05}, Enochs, Jenda and L\'{o}pez-Ramos proved that if
$R$ and $S$ are right and left Noetherian rings respectively
admitting a dualizing bimodule (see \cite[Definition 3.1]{EJL05}),
then all Gorenstein projective left $R$-modules are in
$\mathcal{A}(R)$ (\cite[Proposition 3.9]{EJL05}) and all Gorenstein
injective left $S$-modules are in $\mathcal{B}(S)$
(\cite[Proposition 3.8]{EJL05}). Moreover, if every flat left
$R$-module has finite projective dimension, then a left $S$-module
$N\in \mathcal{B}(S)$ if and only if $N$ has finite Gorenstein
injective dimension by \cite[Lemma 3.15 and Proposition
3.13]{EJL05}; dually, we can deduce that a left $R$-module $M\in
\mathcal{A}(R)$ if and only if $M$ has finite Gorenstein projective
dimension. In view of the relationship between the Auslander class
(resp. Bass class) and the class of Gorenstein projective (resp.
Gorenstein injective) modules in \cite{EJL05}, it is natural to ask
the following question:\vspace{0.1cm}

{\bf Question 2.}  Is there an appropriate semidualizing bimodule $C$
such that the Auslander class $\mathcal{A}_C(R)$ contains all Gorenstein
projective $R$-modules and the Bass class $\mathcal{B}_C(S)$ contains all
Gorenstein injective $S$-modules?\vspace{0.1cm}

The aim of this paper is to study these two questions, and we will define and
investigate $C$-weak injective and $C$-weak flat modules with respect to a semidualizing bimodule $C$. Suppose that $C$ is a faithfully semidualizing bimodule. We provide additional information concerning the important Foxby equivalence between the subclasses of Auslander class $\mathcal{A}_C(R)$ and that of the Bass class $\mathcal{B}_C(S)$. In addition, we study the stability of the Auslander and Bass classes, and some new characterizations of the modules in the Auslander and Bass classes are given. We will answer Question 1 in Theorem \ref{2.2}, and give a partial
answer to Question 2 at the end of the paper, that is, it is shown that if ${}_SC_R$ is a faithfully semidualizing bimodule with finite $S$-projective dimension,
then every Gorenstein injective left $S$-module is in $\mathcal{B}_C(S)$. This paper is organized as follows.

In Section 1, we give some terminology and some preliminary results.

In Section 2, we introduce the notions of $C$-weak injective and $C$-weak flat
modules with respect to a semidualizing bimodule $C$, and prove that
the Auslander class $\mathcal{A}_C(R)$ contains all weak flat left
$R$-modules and the Bass class $\mathcal{B}_C(S)$ contains all weak
injective left $S$-modules. We show that $C$-weak injective and
$C$-weak flat modules possess many nice properties analogous to that of $C$-injective
and $C$-flat (or $C$-projective) modules as in \cite{HW07}. For
example, we prove that the classes $\mathcal{WF}_C(S)$ and
$\mathcal{WI}_C(R)$, consisting of all $C$-weak flat left
$S$-modules and all $C$-weak injective left $R$-modules
respectively, are closed under direct summands, direct products,
direct sums and direct limits. Also, both of them are closed under
pure submodules and pure quotients. As a consequence, we obtain that
the classes $\mathcal{WF}_C(S)$ and $\mathcal{WI}_C(R)$ are covering
and preenveloping.

In Section 3, we investigate Foxby equivalence relative to $C$-weak injective and $C$-weak flat modules. The following is Theorem \ref{3.4}.
Here $\mathcal{WF}(R)_{\leq n}$ and $\mathcal{WI}(S)_{\leq n}$ stand for the class of left $R$-modules of weak flat dimension at most $n$ and the class of left $S$-modules of weak injective dimension at most $n$, respectively; and $\mathcal{WF}_C(S)_{\leq n}$ and $\mathcal{WI}_C(R)_{\leq n}$ denote the class of left $S$-modules of $C$-weak flat dimension at most $n$ and the class of left $R$-modules of $C$-weak injective dimension at most $n$, respectively.\vspace{0.1cm}

{\bf Theorem A.} {\rm (Foxby Equivalence)} {\it There are equivalences of categories
$$\xymatrix@R=20pt@C=60pt{
\mathcal{WF}(R) \ar@<+4pt>[r]^{C\otimes_R-}\ar@{^{(}->}[d] & \mathcal{WF}_C(S) \ar@<+5pt>[l]^{\Hom_S(C,-)}_{\sim}\ar@{^{(}->}[d]\\
\mathcal{WF}(R)_{\leq n} \ar@<+4pt>[r]^{C\otimes_R-}\ar@{^{(}->}[d] & \mathcal{WF}_C(S)_{\leq n} \ar@<+5pt>[l]^{\Hom_S(C,-)}_{\sim}\ar@{^{(}->}[d]\\
\mathcal{A}_C(R) \ar@<+4pt>[r]^{C\otimes_R-} & \mathcal{B}_C(S) \ar@<+5pt>[l]^{\Hom_S(C,-)}_{\sim}\\
\mathcal{WI}_C(R)_{\leq n} \ar@<+4pt>[r]^{C\otimes_R-}\ar@{^{(}->}[u] & \mathcal{WI}(S)_{\leq n} \ar@<+5pt>[l]^{\Hom_S(C,-)}_{\sim}\ar@{^{(}->}[u]\\
\mathcal{WI}_C(R)           \ar@<+4pt>[r]^{C\otimes_R-}\ar@{^{(}->}[u] & \mathcal{WI}(S).          \ar@<+5pt>[l]^{\Hom_S(C,-)}_{\sim}\ar@{^{(}->}[u]\\}
$$}\vspace{0.1cm}

In Section 4, we characterize the stability of Auslander class $\mathcal{A}_C(R)$ and the Bass class $\mathcal{B}_C(S)$, and then give some applications of them. Motivated by \cite[Theorem A]{SSW08}, we show that an iteration of the procedure used to
describe the Auslander class yields exactly the Auslander class, which
generalizes \cite[Theorem 2]{HW07}.
That is, we set $[\mathcal{A}_C(R)]^1=\mathcal{A}_C(R)$,
and inductively set $[\mathcal{A}_C(R)]^{n+1}=\{M\in \Mod R\mid$
there exists a $C\otimes_R-$ exact exact sequence $\cdots\ra W_1\ra
W_0\ra W^0\ra W^1\ra\cdots$ in $\Mod R$ with all $W_i$ and $W^i$ in
$[\mathcal{A}_C(R)]^{n}$ such that $M\cong\coker(W_1\ra W_0)\}$ for
any $n\geq 1$. Similarly, we inductively set
$[\mathcal{B}_C(S)]^{n}$. The following are Theorems \ref{4.7} and \ref{4.8}.

\vspace{0.1cm}

{\bf Theorem B.} {\it $[\mathcal{A}_C(R)]^{n}=\mathcal{A}_C(R)$ and $[\mathcal{B}_C(S)]^{n}=\mathcal{B}_C(S)$ for any $n\geq 1$. }

\section{Preliminaries}

In this section, we give some terminology and some
preliminary results needed in the sequel. For more details the
reader can consult \cite{EJ00,GW15,GT12,HJ06,HW07,TW10}.

\vspace{0.2cm}

{\bf 1.1} Throughout this paper, $R$ and $S$ are fixed associative rings with unites, and all modules are unitary.
We use $\Mod R$ or $\Mod S$ to stand for the class of left $R$- or $S$-modules.
Right $R$- or $S$-modules are identified with left modules over the opposite rings $R^{op}$ or $S^{op}$.
The notation ${}_SM_R$ is used to indicate that $M$ is an $(S,R)$-bimodule,
and the structures are compatible in the sense that $s(xr)=(sx)r$ for all $s\in S, r\in R, x\in M$.
For a left or right $R$-module $M$, $M^+=\Hom_\mathbb{Z}(M,\mathbb{Q}/\mathbb{Z})$.

\vspace{0.2cm}

{\bf 1.2.} A \textit{degreewise finite projective resolution} of a left $R$-module $M$ is a projective resolution of $M$:\ \ $\cdots\ra P_n\ra \cdots\ra P_1\ra P_0\ra M\ra 0$ in $\Mod R$ with each $P_i$ finitely generated projective.
Note that a left $R$-module admitting a degreewise finite projective resolution is also called $FP_{\infty}$ in \cite{BGH14,Br82,HM09}, \textit{infinitely presented} in \cite{BM07}, \textit{strongly finitely presented} in \cite{GT12}, and \textit{super finitely presented} in \cite{GW15}. Also, it is shown that this class of modules plays a crucial role in the process of generalizing many homological results from coherent rings to arbitrary rings (see \cite{BGH14, GH15a,GW15}).

\vspace{0.2cm}

{\bf 1.3.}  An $(S,R)$-bimodule $C={}_SC_R$ is \textit{semidualizing} if

(a1) ${}_SC$ admits a degreewise finite projective resolution in $\Mod S$.

(a2) $C_R$ admits a degreewise finite projective resolution in $\Mod R^{op}$.

(b1) The homothety map ${}_SS_S\buildrel{{}_S\gamma}\over\lra\Hom_{R^{op}}(C,C)$ is an isomorphism.

(b2) The homothety map ${}_RR_R\buildrel{\gamma_R}\over\lra\Hom_{S}(C,C)$ is an isomorphism.

(c1) $\Ext_S^i(C,C)=0$ for all $i\geq 1$.

(c2) $\Ext_{R^{op}}^i(C,C)=0$ for all $i\geq 1$.

A semidualizing bimodule ${}_SC_R$ is \textit{faithfully semidualizing} if
 it satisfies the following conditions for all modules ${}_SN$ and $M_R$:

(1) If $\Hom_S(C,N)=0$, then $N=0$.

(2) If $\Hom_{R^{op}}(C,M)=0$, then $M=0$.\vspace{0.2cm}

By definition, it follows that every semidualizing module is super finitely presented as a left $S$-module or a right $R$-module. It was shown in \cite[Proposition 3.1]{HW07} that if $R=S$ is commutative, then every semidualizing $R$-module is faithfully semidualizing. Also in \cite{HW07} many examples of faithfully semidualizing bimodules were provided over a wide class of non-commutative rings.

\vspace{0.2cm}

{\bf 1.4.}  The \textit{Auslander class} $\mathcal{A}_C(R)$ with respect to $C$ consists of all modules $M$ in $\Mod R$ satisfying:

(A1) $\Tor_i^R(C,M)=0$ for all $i\geq 1$.

(A2) $\Ext_S^i(C,C\otimes_RM)=0$ for all $i\geq 1$.

(A3) The natural evaluation homomorphism $\mu_{_M}: M\longrightarrow\Hom_S(C,C\otimes_RM)$ is an isomorphism (of left $R$-modules).

The \textit{Bass class} $\mathcal{B}_C(S)$ with respect to $C$ consists of all modules $N\in\Mod S$ satisfying:

(B1) $\Ext_S^i(C,N)=0$ for all $i\geq 1$.

(B2) $\Tor^R_i(C,\Hom_S(C,N))=0$ for all $i\geq 1$.

(B3) The natural evaluation homomorphism $\nu_{_N}: C\otimes_R\Hom_S(C,N)\longrightarrow N$ is an isomorphism (of left $S$-modules).

It is an important property of Auslander and Bass classes that they are equivalent under the pair of functors (\cite[Proposition 4.1]{HW07}):
$$\xymatrix@C=80pt{ \mathcal{A}_C(R) \ar@<+4pt>[r]^{C\otimes_R-} & \mathcal{B}_C(S) \ar@<+5pt>[l]^{\Hom_S(C,-)}_{\sim}. }
$$

{\bf 1.5.} Let $\mathcal{F}$ be a subcategory of $\Mod R$. The homomorphism
$f:F\ra M$ in $\Mod R$ with $F\in \mathcal{F}$ is an \textit{$\mathcal{F}$-precover} of $M$ if for any
homomorphism $g:F_0\ra M$ in $\Mod R$ with $F_0\in \mathcal{F}$, there exists a homomorphism
$h:F_0\ra F$ such that the following diagram commutes:
$$\xymatrix{ & F_0 \ar[d]^{g} \ar@{-->}[ld]_{h}\\
F \ar[r]^{f} & M.}$$
The homomorphism $f:F\ra M$ is \textit{right minimal} if an endomorphism $h:F\ra F$ is an automorphism whenever $f=fh$.
An $\mathcal{F}$-precover $f:F\ra M$ is an \textit{$\mathcal{F}$-cover} if $f$ is right minimal. We say that $\mathcal{F}$ is
{\it (pre)covering} if every module in $\Mod R$ admits an $\mathcal{F}$-(pre)cover.

Dually, the notions of an {\it $\mathcal{F}$-preenvelope}, a {\it left minimal
homomorphism}, an {\it $\mathcal{F}$-envelope} and a {\it (pre)enveloping subcategory} are defined.

\vspace{0.2cm}

{\bf 1.6.} We say that a
sequence $\mathbf{X}=\ \ \cdots\ra X_1\ra X_0\ra X_{-1}\ra \cdots$ in $\Mod R$ (resp. in $\Mod S^{op}$) is $C\otimes_R-$ (resp. $-\otimes_S C$) exact if
the complex $C\otimes_R\mathbf{X}$ (resp. $\mathbf{X}\otimes_S C$) is
exact; and a sequence $\mathbf{X}$  in $\Mod S$ is $\Hom_S(C,-)$ (resp.
$\Hom_S(-,C)$) exact if the complex $\Hom_S(C,\mathbf{X})$ (resp.
$\Hom_S(\mathbf{X},C)$) is exact.

We denote by $\mathcal{X}$ a fixed class of left $R$-modules.
An \textit{$\mathcal{X}$-resolution} of a left $R$-module $M$ is an
exact sequence $\mathbf{X}=\ \ \cdots\ra X_1\ra X_0\ra M\ra 0$ in $\Mod R$ with
$X_i\in \mathcal{X}$ for all $i\geq 0$. An
\textit{$\mathcal{X}$-coresolution} of a left $R$-module $M$ is an exact
sequence $\mathbf{X}=\ \ 0 \ra M\ra X^0\ra X^1\ra\cdots$ in $\Mod R$ with
$X^i\in \mathcal{X}$ for all $i\geq 0$.

If the class $\mathcal{X}$ is precovering, then for any left $R$-module $M$,
there exists an \textit{augmented proper $\mathcal{X}$-resolution} of $M$, that is,
a complex $$\mathbf{X}=\ \ \cdots\overset{\partial_{2}^\mathbf{X}}\lra X_1\overset{\partial_{1}^\mathbf{X}}\lra X_0\lra M\lra 0$$
 in $\Mod R$ such that it is $\Hom_R(X,-)$ exact for each $X\in \mathcal{X}$. The truncated complex
$$\mathbf{X}_M=\ \ \cdots\overset{\partial_{3}^\mathbf{X}}\lra X_2\overset{\partial_{2}^\mathbf{X}}\lra X_1\overset{\partial_{1}^\mathbf{X}}\lra X_0\lra 0$$
is a \textit{proper $\mathcal{X}$-resolution} of $M$.

In general, an augmented proper $\mathcal{X}$-resolution $\mathbf{X}$ need not be exact. However, the complex $\mathbf{X}$ is exact if $\mathcal{X}$ contains all projective left $R$-modules. Dually, the augmented coproper $\mathcal{X}$-coresolutions are defined, and they must be exact if the class $\mathcal{X}$ contains all injective left $R$-modules.

\vspace{0.2cm}

{\bf 1.7.} A module in $\Mod S$ is \textit{$C$-flat} (resp. \textit{$C$-projective}) if it has the form $C\otimes_RF$ for some flat
(resp. projective) module $F\in\Mod R$. A module in $\Mod R$ is \textit{$C$-injective}
if it has the form $\Hom_S(C,I)$ for some injective module $I\in\Mod S$. We set
\[\begin{array}{ll}\vspace{0.1cm}
&\mathcal{F}_C(S)=\{C\otimes_RF\mid F \mbox{ is a flat left $R$-module}\},\\\vspace{0.1cm}
&\mathcal{P}_C(S)=\{C\otimes_RF\mid P \mbox{ is a projective left $R$-module}\},\\\vspace{0.1cm}
&\mathcal{I}_C(R)=\{\Hom_S(C,I)\mid I \mbox{ is an injective left $S$-module}\}.\\
 \end{array}\]

 Over a commutative ring $R$, the notions of $C$-projective and $C$-injective dimensions of an $R$-module were introduced in \cite{TW10}.
That is, the \textit{$\mathcal{P}_C$-projective dimension} of an $R$-module $M$ is
$$\mathcal{P}_C\mbox{-}\pd(M)=\mbox{inf}\{\mbox{sup}\{n\mid X_n\neq 0\}\mid \mbox{$X$ is a proper $\mathcal{P}_C$-projective resolution of $M$}\}.$$
The \textit{$\mathcal{I}_C$-injective dimension}, denoted by $\mathcal{I}_C$-$\id(-)$, can be defined dually.
It was also proven in \cite[Corollary 2.10]{TW10} that

 (a) $\mathcal{P}_C\mbox{-}\pd(M)\leq n$ if and only if there is an exact sequence
 $$0\ra C\otimes_RP_n\ra\cdots\ra C\otimes_RP_1\ra C\otimes_RP_0\ra M\ra 0$$ with each $P_i$ a projective  $R$-module.

 (b) $\mathcal{I}_C\mbox{-}\id(M)\leq n$ if and only if there is an exact sequence
 $$0\ra M\ra \Hom_R(C,I^0)\ra\Hom_R(C,I^1)\ra\cdots\ra\Hom_R(C,I^n)\ra 0$$ with each $I^i$ an injective $R$-module.

\vspace{0.2cm}

{\bf 1.8.} A module $M$ in $\Mod R$ (resp. $N$ in $\Mod R^{op}$)
is \textit{weak injective} (resp. \textit{weak flat}) if $\Ext_R^1(F,M)=0$
(resp. $\Tor^R_1(N,F)=0$) for any super finitely presented left $R$-module $F$.
We use $\mathcal{WI}(R)$ (resp. $\mathcal{WF}(R^{op})$)
to denote the full subcategory of $\Mod R$ (resp. $\Mod R^{op}$)
consisting of weak injective modules (resp. weak flat modules).

The weak injective
dimension of a module $M$ in $\Mod R$, denoted by $\wid_R(M)$, is defined as
\begin{align*}
  \wid_R(M)= \inf\left\{ \right.&n\mid\Ext_R^{n+1}(F,M)=0\mbox{ for any super} \\
   &  \left.\mbox{finitely presented left $R$-module $F$}\right\}.
\end{align*}
$
$ If no such $n$ exists, we set
$\wid_R(M)=\infty$.
Dually, the weak flat dimension $\wid_R(-)$ of a module is defined.

\vspace{0.2cm}
 {\bf 1.9.} Given a short exact sequence $0\ra A\ra B\ra C\ra 0$ in $\Mod R$ where $A$
is a submodule of $B$ and $C$ is the corresponding quotient module. The sequence is said to be \textit{pure exact} if $\Hom_R(P,B)\ra \Hom_R(P,C)\ra 0$ is exact for any
finitely presented module $P$ in $\Mod R$, or equivalently, if $0\ra M\otimes_R A\ra M\otimes_R B$
is exact for any module $M$ in $\Mod R^{op}$.
In this case, $A$ and $C$ are called a \textit{pure submodule} and a \textit{pure quotient} of $B$ respectively.


\section{$C$-weak injective and $C$-weak flat modules}

In this section, we give a treatment of $C$-weak injective and $C$-weak flat modules with respect to a (faithful) semidualizing bimodule $C$.

\begin{definition}\label{2.1}  A module in $\Mod S$ is called \textit{$C$-weak flat}  if
it has the form $C\otimes_RF$ for some weak flat module $F\in \Mod R$.
A module in $\Mod R$ is called \textit{$C$-weak injective} if it has the form $\Hom_S(C,I)$ for some weak injecitve module $I\in \Mod S$. We set
\[\begin{array}{ll}\vspace{0.1cm}
&\mathcal{WF}_C(S)=\{C\otimes_RF\mid F \mbox{ is a weak flat left $R$-module}\},\\
&\mathcal{WI}_C(R)=\{\Hom_S(C,I)\mid I \mbox{ is a weak injective left $S$-module}\}.\\
 \end{array}\]

The \textit{$C$-weak flat dimension} of a module $M\in \Mod S$ is defined that
 $C\mbox{-}\wfd_S(M)\leq n$ if and only if there is an exact sequence
 $$0\ra C\otimes_RF_n\ra\cdots\ra C\otimes_RF_1\ra C\otimes_RF_0\ra M\ra 0$$ in $\Mod S$ with each $F_i$ in $\mathcal{WF}(R)$. If no such $n$ exists, set
 $C\mbox{-}\wfd_S(M)=\infty$.

The \textit{$C$-weak injective dimension} of a module $M\in \Mod R$ is defined that  $C\mbox{-}\wid_R(M)\leq n$ if and only if there is an exact sequence
 $$0\ra M\ra \Hom_S(C,I^0)\ra\Hom_S(C,I^1)\ra\cdots\ra\Hom_S(C,I^n)\ra 0$$ in $\Mod R$ with each $I^i$ in $\mathcal{WI}(S)$. If no such $n$ exists, set
$C\mbox{-}\wid_R(M)=\infty$. \end{definition}

The following theorem gives an affirmative answer to Question 1.

\begin{theorem}\label{2.2}  {The Auslander class $\mathcal{A}_C(R)$ contains all weak flat modules in $\Mod R$,
and the Bass class $\mathcal{B}_C(S)$ contains all weak injective modules in $\Mod S$.}\end{theorem}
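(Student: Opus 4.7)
The plan is to verify the three axioms (A1)--(A3) of $\mathcal{A}_C(R)$ for an arbitrary weak flat left $R$-module $M$; the Bass-side statement for a weak injective $N\in \Mod S$ will follow by an entirely symmetric argument. The crucial input in both directions is that $C$ is super finitely presented on both sides (by (a1) and (a2) of \textbf{1.3}), so the weak flat/weak injective hypothesis on $M$ (resp.\ $N$) is strong enough to control entire projective resolutions of $C$ rather than merely individual syzygies.

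For condition (A1), the degreewise finite projective resolution of $C_R$ provided by (a2) consists of finitely generated projective right $R$-modules whose syzygies remain super finitely presented; dimension shifting therefore upgrades the hypothesis $\Tor_1^R(F,M)=0$ for every super finitely presented right $R$-module $F$ to $\Tor_i^R(C,M)=0$ for all $i\geq 1$. For (A2) and (A3) together, take a degreewise finite projective resolution $\cdots\to Q_1\to Q_0\to C\to 0$ in $\Mod S$ from (a1) and apply $\Hom_S(-,C)$. Conditions (c1) and (b2) yield the exact sequence of right $R$-modules
\[
0\to R \to \Hom_S(Q_0,C) \to \Hom_S(Q_1,C) \to \cdots.
\]
Each $\Hom_S(Q_j,C)$ is a direct summand of some $C^{n_j}$ and is therefore super finitely presented as a right $R$-module; a two-out-of-three induction for the class of super finitely presented modules shows that the same holds for every kernel arising along this long sequence. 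Hence tensoring by $M$ preserves exactness (all intermediate $\Tor_1^R(-,M)$ vanish), and invoking the swap isomorphism $\Hom_S(Q_j,C)\otimes_R M\cong \Hom_S(Q_j,C\otimes_R M)$ for finitely generated projective $Q_j$ converts the result into
\[
0\to M\to \Hom_S(Q_0,C\otimes_R M) \to \Hom_S(Q_1,C\otimes_R M)\to\cdots.
\]
A direct inspection identifies the map $M\to \Hom_S(Q_0,C\otimes_R M)$ with $\mu_M$ followed by the inclusion $\Hom_S(C,C\otimes_R M)\hookrightarrow \Hom_S(Q_0,C\otimes_R M)$, so the exactness of this sequence delivers simultaneously $\Ext_S^i(C,C\otimes_R M)=0$ for $i\geq 1$ (condition (A2)) and $\mu_M\colon M\xrightarrow{\sim}\Hom_S(C,C\otimes_R M)$ (condition (A3)).

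The Bass statement is proved by dualizing this construction. For $N\in\mathcal{WI}(S)$, condition (B1) is immediate from the syzygy argument applied to the resolution of ${}_SC$ from (a1). For (B2) and (B3), start with the degreewise finite projective resolution $\cdots\to P_1\to P_0\to C\to 0$ in $\Mod R^{op}$ provided by (a2), apply $\Hom_{R^{op}}(-,C)$ invoking (b1) and (c2) to produce an exact sequence of left $S$-modules $0\to S\to \Hom_{R^{op}}(P_0,C)\to\cdots$ whose terms and syzygies are super finitely presented, apply $\Hom_S(-,N)$ (which preserves exactness by weak injectivity of $N$ plus dimension shifting), and finally use the swap $\Hom_S(\Hom_{R^{op}}(P_j,C),N)\cong P_j\otimes_R \Hom_S(C,N)$ to arrive at the augmented complex
\[
\cdots\to P_1\otimes_R \Hom_S(C,N)\to P_0\otimes_R \Hom_S(C,N)\to N\to 0,
\]
which simultaneously exhibits $\nu_N$ as an isomorphism and gives $\Tor_i^R(C,\Hom_S(C,N))=0$ for $i\geq 1$.

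The principal technical ingredient is the closure of the super finitely presented class under direct summands, extensions, and the two-out-of-three property for short exact sequences, which keeps every syzygy arising after the two successive applications of $\Hom$ super finitely presented; once this is in place, the weak flat/weak injective hypothesis supplies the needed exactness under $-\otimes_R M$ (resp.\ $\Hom_S(-,N)$), and the remaining steps reduce to the standard Hom--tensor swap identities for finitely generated projective arguments together with dimension shifting for the higher $\Tor$/$\Ext$ vanishing.
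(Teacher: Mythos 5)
Your argument is correct and follows essentially the same route as the paper's proof: quote the $\Hom_S(-,C)$-exact degreewise finite projective resolution of $C$, note that the terms $\Hom_S(Q_j,C)$ and their syzygies are super finitely presented so that tensoring with the weak flat module $M$ preserves exactness, and then use the tensor-evaluation isomorphisms for finitely generated projectives to read off (A2) and (A3), with (A1) obtained by dimension shifting (the paper cites this from the weak flat literature and handles the last step via two commutative diagrams and the five lemma rather than your single augmented complex, and it leaves the Bass-class half, which you write out, as the dual argument). No gaps; the two-out-of-three closure of super finitely presented modules you invoke is exactly the cited Hummel--Marley lemma.
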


\begin{proof} We only show that $\mathcal{A}_C(R)$ contains all weak flat modules in $\Mod R$, the other proof is dual.
Let $M$ be a weak flat module in $\Mod R$. Then $\Tor_i^R(C,M)=0$ for all $i\geq 1$ by \cite[Proposition 3.1]{GW15}.
Since ${}_SC_R$ is semidualizing, it follows that $\Ext_S^i(C,C)=0$ for all $i\geq 1$,
and there exists a $\Hom_S(-,C)$-exact exact sequence
$$\cdots\lra P_n\lra\cdots\lra P_1\lra P_0\lra C\lra 0\eqno{(2.1)}$$
in $\Mod S$ with all $P_i$ finite generated projective. This gives rise to the exactness of
$$0\ra\Hom_S(C,C)\ra \Hom_S(P_0,C)\ra  \cdots\ra \Hom_S(P_n,C)\ra\cdots.$$
One easily checks that each $\Hom_S(P_i,C)$ is super finitely presented by applying $\Hom_S(P_i,-)$ to the sequence (2.1).
Now that $\Hom_S(C,C)\cong R$ since ${}_SC_R$ is semidualizing,
it follows that all $\im(\Hom_S(P_i,C)\ra \Hom_S(P_{i+1},C))$ are super finitely presented by \cite[Lemma 2.3]{HM09}.
Because $M$ is a weak flat left $R$-module, one gets the following exact sequence
$$0\ra\Hom_S(C,C)\otimes_RM\ra \Hom_S(P_0,C)\otimes_RM\ra\Hom_S(P_1,C)\otimes_RM\ra\cdots.$$
By the tensor evaluation morphism (cf.\cite[1.10]{HW07}), we have the following isomorphisms:
$$\omega_{P_iCM}:\Hom_S(P_i,C)\otimes_RM\buildrel{\cong}\over\lra\Hom_S(P_i,C\otimes_RM) \ \ \mbox{for all } i\geq 0$$
since $P_i$ is finitely generated projective. Thus we obtain the following commutative diagram with exact rows:
$$\xymatrix@C=10pt@R=15pt{0 \ar[r] & \Hom_S(C,C)\otimes_RM \ar[d]_{\omega_{CCM}} \ar[r] & \Hom_S(P_0,C)\otimes_RM
\ar[d]_{\omega_{P_0CM}}^{\cong}\ar[r] & \Hom_S(P_1,C)\otimes_RM \ar[d]_{\omega_{P_1CM}}^{\cong} \\
0 \ar[r] & \Hom_S(C,C\otimes_RM) \ar[r] & \Hom_S(P_0,C\otimes_RM)  \ar[r] &\Hom_S(P_1,C\otimes_RM).}$$
Hence $\omega_{CCM}:\Hom_S(C,C)\otimes_RM\ra\Hom_S(C,C\otimes_RM)$ is an isomorphism by the five lemma,
which implies that $\mu_M:M\ra\Hom_S(C,C\otimes_RM)$ is an isomorphism.
Now consider the following commutative diagram with the upper row exact:
$$\xymatrix@C=10pt@R=15pt{0 \ar[r] & \Hom_S(C,C)\otimes_RM \ar[d]^{\cong} \ar[r] & \Hom_S(P_0,C)\otimes_RM
\ar[d]^{\cong}\ar[r] & \Hom_S(P_1,C)\otimes_RM \ar[d]^{\cong} \ar[r] & \cdots\\
 0 \ar[r] & \Hom_S(C,C\otimes_RM) \ar[r] & \Hom_S(P_0,C\otimes_RM)  \ar[r] &\Hom_S(P_1,C\otimes_RM)\ar[r] & \cdots.}$$
 Then we obtain the exactness of $$0\ra\Hom_S(C,C\otimes_RM)\ra \Hom_S(P_0,C\otimes_RM)\ra\Hom_S(P_1,C\otimes_RM)\ra\cdots.$$
It follows that $\Ext_S^i(C,C\otimes_RM)=0$ for all $i\geq 1$, and hence $M\in \mathcal{A}_C(R)$.
 \end{proof}

In what follows, $C={}_SC_R$ always stands for a faithfully semidualizing bimodule. By Theorem \ref{2.2} and \cite[Theorem 6.3]{HW07}, we immediately get the following result.

\begin{corollary}\label{2.3} {The Auslander class $\mathcal{A}_C(R)$ contains all modules in $\Mod R$ of finite weak flat dimension,
the Bass class $\mathcal{B}_C(S)$ contains all modules in $\Mod S$ of finite weak injective dimension. }\end{corollary}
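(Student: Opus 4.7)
The plan is to prove both assertions simultaneously by a straightforward dimension-shifting induction, using Theorem \ref{2.2} as the base case and the closure properties of $\mathcal{A}_C(R)$ and $\mathcal{B}_C(S)$ under short exact sequences supplied by \cite[Theorem 6.3]{HW07}. Since the two statements are formally dual, I would write the proof only for the Auslander class and indicate that the Bass class case follows by dualizing the argument (replacing weak flat resolutions by weak injective coresolutions, and $C\otimes_R-$ by $\Hom_S(C,-)$).

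Let $M\in\Mod R$ with $\wfd_R(M)=n<\infty$. I would induct on $n$. For $n=0$, the module $M$ is weak flat, so $M\in\mathcal{A}_C(R)$ directly by Theorem \ref{2.2}. For $n\geq 1$, by the definition of weak flat dimension there is an exact sequence
$$0\lra F_n\lra F_{n-1}\lra\cdots\lra F_1\lra F_0\lra M\lra 0$$
in $\Mod R$ with each $F_i$ weak flat. Splitting off the rightmost step yields a short exact sequence
$$0\lra K\lra F_0\lra M\lra 0,$$
where $K$ denotes the $(n-1)$-st syzygy; by the standard syzygy/dimension-shift, $\wfd_R(K)\leq n-1$. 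The inductive hypothesis gives $K\in\mathcal{A}_C(R)$, and $F_0\in\mathcal{A}_C(R)$ by Theorem \ref{2.2}. Invoking the closure of $\mathcal{A}_C(R)$ under short exact sequences from \cite[Theorem 6.3]{HW07}—that is, whenever two of the three terms of a short exact sequence lie in $\mathcal{A}_C(R)$ then so does the third—we conclude $M\in\mathcal{A}_C(R)$.

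For the Bass class side, I would carry out the dual induction on $\wid_S(N)$: choose a weak injective coresolution
$$0\lra N\lra I^0\lra I^1\lra\cdots\lra I^n\lra 0$$
with each $I^j\in\mathcal{WI}(S)$, split off the leftmost short exact sequence $0\to N\to I^0\to L\to 0$ with $\wid_S(L)\leq n-1$, and apply the two-out-of-three property for $\mathcal{B}_C(S)$ together with Theorem \ref{2.2}. I do not anticipate any genuine obstacle here; the only care needed is to check that the two-out-of-three property we borrow from \cite[Theorem 6.3]{HW07} is stated in a form applicable to arbitrary short exact sequences (not just those arising from projective/injective resolutions), which is indeed how Holm–White formulate it.
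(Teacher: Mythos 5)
Your argument is correct and is essentially the paper's own proof: the authors deduce the corollary immediately from Theorem \ref{2.2} together with the two-of-three property of $\mathcal{A}_C(R)$ and $\mathcal{B}_C(S)$ from \cite[Theorem 6.3]{HW07}, which is exactly the induction you spell out. The only cosmetic point is that the weak flat (resp.\ weak injective) dimension in 1.8 is defined by Tor (resp.\ Ext) vanishing, so the finite weak flat resolution (resp.\ weak injective coresolution) you start from is obtained by truncating a projective (resp.\ injective) resolution and dimension shifting, as in \cite{GW15}; this is routine and does not affect the proof.
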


The following result plays a fundamental role in this paper.

\begin{proposition}\label{2.4}  {The following statements hold for modules ${}_SV$ and ${}_RU$:

$\mathrm{(1)}$ $V\in\mathcal{WF}_C(S)$ if and only if $V\in\mathcal{B}_C(S)$ and $\Hom_S(C,V)$ is weak flat over $R$.

$\mathrm{(2)}$ $U\in\mathcal{WI}_C(R)$ if and only if $U\in\mathcal{A}_C(R)$ and $C\otimes_RU$ is weak injective over $S$. }
\end{proposition}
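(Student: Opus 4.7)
The plan is to prove both directions of each statement by leveraging Theorem \ref{2.2} together with Foxby equivalence (the adjoint pair in 1.4), which says that for any $M\in\mathcal{A}_C(R)$ the evaluation map $\mu_M\colon M\to\Hom_S(C,C\otimes_R M)$ is an isomorphism, and for any $N\in\mathcal{B}_C(S)$ the evaluation map $\nu_N\colon C\otimes_R\Hom_S(C,N)\to N$ is an isomorphism.

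For (1), in the ``only if'' direction, I would start from $V=C\otimes_R F$ with $F$ weak flat over $R$. By Theorem \ref{2.2}, $F\in\mathcal{A}_C(R)$, so Foxby equivalence yields $V=C\otimes_R F\in\mathcal{B}_C(S)$. Applying $\Hom_S(C,-)$ and using that $\mu_F$ is an isomorphism then gives $\Hom_S(C,V)\cong F$, which is weak flat by hypothesis. Conversely, suppose $V\in\mathcal{B}_C(S)$ and $F:=\Hom_S(C,V)$ is weak flat. Foxby equivalence tells us $F\in\mathcal{A}_C(R)$ and that $\nu_V\colon C\otimes_R F\to V$ is an isomorphism, so $V\cong C\otimes_R F$ with $F$ weak flat, which puts $V$ in $\mathcal{WF}_C(S)$ by Definition \ref{2.1}.

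For (2), the argument is dual. If $U=\Hom_S(C,I)$ with $I$ weak injective over $S$, Theorem \ref{2.2} gives $I\in\mathcal{B}_C(S)$, and then Foxby equivalence places $U=\Hom_S(C,I)$ in $\mathcal{A}_C(R)$ with $C\otimes_R U\cong I$ via $\nu_I$, hence $C\otimes_R U$ is weak injective. Conversely, if $U\in\mathcal{A}_C(R)$ and $I:=C\otimes_R U$ is weak injective, then $\mu_U\colon U\to\Hom_S(C,C\otimes_R U)=\Hom_S(C,I)$ is an isomorphism, exhibiting $U$ as $\Hom_S(C,I)$ with $I$ weak injective, so $U\in\mathcal{WI}_C(R)$.

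There is no serious obstacle; the entire proof is a bookkeeping exercise once one recognizes that Theorem \ref{2.2} supplies the Auslander/Bass class membership for free, and the evaluation isomorphisms $\mu$ and $\nu$ from condition (A3)/(B3) let one pass back and forth between the ``external'' description $V=C\otimes_R F$ (resp.\ $U=\Hom_S(C,I)$) and the ``internal'' description via $F=\Hom_S(C,V)$ (resp.\ $I=C\otimes_R U$). The only point worth being careful about is checking that weak flatness (resp.\ weak injectivity) really does transfer across these natural isomorphisms, which is immediate because the classes $\mathcal{WF}(R)$ and $\mathcal{WI}(S)$ are closed under isomorphism.
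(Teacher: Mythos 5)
Your proposal is correct and follows essentially the same route as the paper: Theorem \ref{2.2} gives the Auslander/Bass class membership of $F$ (resp.\ $I$), the equivalence of \cite[Proposition 4.1]{HW07} transfers it across $C\otimes_R-$ and $\Hom_S(C,-)$, and the evaluation isomorphisms $\mu$ and $\nu$ identify $\Hom_S(C,V)$ with $F$ (resp.\ exhibit $V\cong C\otimes_R\Hom_S(C,V)$), exactly as in the paper's argument. No gaps; the closure-under-isomorphism remark is the same implicit point the paper relies on.
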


\begin{proof} We only prove (1), and (2) is dual.

``Only if\," part. Let $V\in\mathcal{WF}_C(S)$. Then $V=C\otimes_RF$ for some weak flat left $R$-module $F$.
Note that $F\in \mathcal{A}_C(R)$ by Theorem \ref{2.2}, it follows that $V\in \mathcal{B}_C(S)$ by \cite[Proposition 4.1]{HW07}. It is clear that
$\Hom_S(C,C\otimes_RF)\cong F$, and so $\Hom_S(C,V)$ is a weak flat left $R$-module.

``If\," part. Suppose that $V\in\mathcal{B}_C(S)$ and $\Hom_S(C,V)$ is a weak flat left $R$-module. Then it is clear that
$V\cong C\otimes_R\Hom_S(C,V)$, and thus $V$ is a $C$-weak flat left $S$-module.\end{proof}

\begin{proposition}\label{2.5}  {The following statements hold.

$\mathrm{(1)}$  The class $\mathcal{WF}_C(S)$ is closed under extensions and kernels of epimorphisms.

$\mathrm{(2)}$  The class $\mathcal{WI}_C(R)$ is closed under extensions and cokernels of monomorphisms. }
\end{proposition}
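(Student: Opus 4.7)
The plan is to bootstrap from the characterization in Proposition \ref{2.4}: $V\in\mathcal{WF}_C(S)$ iff $V\in\mathcal{B}_C(S)$ and $\Hom_S(C,V)$ is weak flat over $R$, and dually $U\in\mathcal{WI}_C(R)$ iff $U\in\mathcal{A}_C(R)$ and $C\otimes_RU$ is weak injective over $S$. Each of the four closure claims therefore splits into a Bass/Auslander closure and a weak flat/weak injective closure, linked by the functors $\Hom_S(C,-)$ and $C\otimes_R-$.

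For part (1), consider a short exact sequence $0\to V'\to V\to V''\to 0$ in $\Mod S$ in which two of the three terms lie in $\mathcal{WF}_C(S)$, hence in $\mathcal{B}_C(S)$. The Bass class satisfies the two-out-of-three property for short exact sequences (via the long exact sequences in $\Ext_S^*(C,-)$ and $\Tor_*^R(C,\Hom_S(C,-))$, together with a routine diagram chase for the evaluation map $\nu$; cf. the treatment in \cite{HW07}), so all three terms lie in $\mathcal{B}_C(S)$. In particular $\Ext_S^1(C,-)$ vanishes on each term, so $\Hom_S(C,-)$ preserves exactness and produces a short exact sequence of left $R$-modules in which two entries are weak flat by Proposition \ref{2.4}. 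I then invoke closure of $\mathcal{WF}(R)$ under extensions (immediate from the $\Tor$ long exact sequence) and under kernels of epimorphisms: given $0\to A\to B\to D\to 0$ with $B,D\in\mathcal{WF}(R)$ and $N$ super finitely presented, the fragment $\Tor_2^R(N,D)\to\Tor_1^R(N,A)\to\Tor_1^R(N,B)$ has both outer terms zero, the right by weak flatness of $B$ and the left via the dimension shift $\Tor_2^R(N,D)\cong\Tor_1^R(\Omega N,D)$, where $\Omega N$ is again super finitely presented by \cite[Lemma 2.3]{HM09}. Hence the missing $\Hom_S(C,-)$ entry is weak flat, and a final application of Proposition \ref{2.4} places the missing $S$-module in $\mathcal{WF}_C(S)$.

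Part (2) proceeds by the dual argument. For a short exact sequence $0\to U'\to U\to U''\to 0$ in $\Mod R$ with two terms in $\mathcal{WI}_C(R)$, the two-out-of-three property of $\mathcal{A}_C(R)$ (the analogue of the Bass version, using the evaluation map $\mu$) puts all three terms into $\mathcal{A}_C(R)$; the vanishing $\Tor_1^R(C,-)=0$ on $\mathcal{A}_C(R)$ makes $C\otimes_R-$ preserve exactness; and closure of $\mathcal{WI}(S)$ under extensions and cokernels of monomorphisms, proved by the $\Ext$ long exact sequence together with the same syzygy trick, finishes the reduction via Proposition \ref{2.4}. No substantial obstacle arises: the only mildly subtle point is that closure of $\mathcal{WF}(R)$ under kernels of epimorphisms (resp.\ of $\mathcal{WI}(S)$ under cokernels of monomorphisms) is not immediate from the $\Tor_1$- (resp.\ $\Ext^1$-)vanishing definition, but the super finite presentation of syzygies upgrades weak flatness to vanishing of all higher $\Tor_i^R(N,-)$ with $N$ super finitely presented, which is exactly what drives the argument.
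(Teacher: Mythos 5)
Your proposal follows essentially the same route as the paper's proof: reduce via Proposition \ref{2.4} to the two-of-three property of $\mathcal{B}_C(S)$ (resp. $\mathcal{A}_C(R)$) — which the paper simply cites as \cite[Theorems 6.2 and 6.3]{HW07}, and which does require $C$ to be faithfully semidualizing rather than being a purely routine chase — then apply $\Hom_S(C,-)$ (resp. $C\otimes_R-$), exact here because $\Ext^1_S(C,-)$ vanishes on the kernel term, and conclude from closure of weak flat (resp. weak injective) modules under extensions and kernels of epimorphisms (resp. cokernels of monomorphisms). The only deviation is that you re-prove this last closure by the syzygy dimension shift, where the paper cites \cite[Proposition 2.6(2)]{GH15a}; both are correct.
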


\begin{proof} We only prove (1), and (2) is the dual of (1).

Let $$0\ra M'\ra M\ra M''\ra 0 \eqno{(2.2)}$$ be a short exact
sequence in $\Mod S$. If $M', M''\in \mathcal{WF}_C(S)$, then $M', M''\in \mathcal{B}_C(S)$
and $\Hom_S(C,M')$ and $\Hom_S(C,M'')$ are weak flat left $R$-modules by Proposition \ref{2.4}. It follows that $M\in\mathcal{B}_C(S)$ by \cite[Theorem 6.2]{HW07}.
Also, we have $\Ext^1_S(C,M')=0$.
Using $\Hom_S(C,-)$ to the sequence (2.2), we obtain the following exactness of
$$0\ra \Hom_S(C,M')\ra \Hom_S(C,M)\ra \Hom_S(C,M'')\ra 0$$ in $\Mod R.$ Then $\Hom_S(C,M)$ is weak flat
by \cite[Proposition 2.6(2)]{GH15a}. Hence $M\in\mathcal{WF}_C(S)$ by Proposition \ref{2.4}(1).

Assume that $M, M''\in \mathcal{WF}_C(S)$ in the sequence (2.2), then $M, M''\in \mathcal{B}_C(S)$,
and $\Hom_S(C,M)$ and $\Hom_S(C,M'')$ are weak flat left $R$-modules. By \cite[Theorem 6.3]{HW07}, we have $M'\in\mathcal{B}_C(S)$,
and so $\Ext^1_S(C,M')=0$.
Applying $\Hom_S(C,-)$ to the sequence (2.2), one gets the following exact sequence
$$0\ra \Hom_S(C,M')\ra \Hom_S(C,M)\ra \Hom_S(C,M'')\ra 0$$ in $\Mod R$.
Then, by \cite[Proposition 2.6(2)]{GH15a}, we have $\Hom_S(C,M')$ is weak flat.
It follows from Proposition \ref{2.4}(1) that $M'\in\mathcal{WF}_C(S)$, as desired.
 \end{proof}

The following result generalizes \cite[Theorem 2.10, and Remark 2.2(2)]{GW15}.

\begin{proposition}\label{2.6}  {The following statements hold for modules ${}_SV$ and ${}_RU$.

$\mathrm{(1)}$ $V\in \mathcal{WF}_C(S)$ if and only if $V^+\in \mathcal{WI}_C(S^{op})$.

$\mathrm{(2)}$ $U\in \mathcal{WI}_C(R)$ if and only if $U^+\in \mathcal{WF}_C(R^{op})$. }
\end{proposition}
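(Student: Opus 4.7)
The plan is to combine the intrinsic characterizations from Proposition \ref{2.4} with two instances of character-module duality: (i) the duality between weak flat and weak injective modules recorded in \cite[Theorem 2.10 and Remark 2.2(2)]{GW15}, and (ii) a duality between $\mathcal{A}_C$ and $\mathcal{B}_C$ under $(-)^+$. I sketch part (1); part (2) is entirely dual.

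For the ``only if'' direction, suppose $V=C\otimes_R F$ with $F\in\mathcal{WF}(R)$. Hom-tensor adjunction yields $V^+=(C\otimes_R F)^+\cong\Hom_{R^{op}}(C,F^+)$ as right $S$-modules, and by (i) $F^+$ is weak injective in $\Mod R^{op}$. Hence $V^+\in\mathcal{WI}_C(S^{op})$ directly.

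For the ``if'' direction, assume $V^+\in\mathcal{WI}_C(S^{op})$. The right-module analogue of Proposition \ref{2.4}(2) gives $V^+\in\mathcal{A}_C(S^{op})$ and $V^+\otimes_S C$ weak injective in $\Mod R^{op}$. I would translate these two conditions back to $V$ via the natural isomorphisms
\begin{align*}
\Tor^S_i(V^+,C) &\cong \Ext^i_S(C,V)^+, \\
V^+\otimes_S C &\cong \Hom_S(C,V)^+, \\
\Ext^i_{R^{op}}(C,\Hom_S(C,V)^+) &\cong \Tor^R_i(C,\Hom_S(C,V))^+, \\
\Hom_{R^{op}}(C,\Hom_S(C,V)^+) &\cong (C\otimes_R\Hom_S(C,V))^+,
\end{align*}
each of which is valid because ${}_SC_R$ admits degreewise finite projective resolutions on both sides (the first and third by a f.g.-projective resolution of $C$; the second by tensor evaluation for a f.p. module; the fourth by standard Hom-tensor adjunction). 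Because $\mathbb{Q}/\mathbb{Z}$ is a faithful injective cogenerator, $M=0$ iff $M^+=0$ and $f$ is an isomorphism iff $f^+$ is. These facts, combined with the displayed isomorphisms, convert the three defining conditions of $\mathcal{A}_C(S^{op})$ for $V^+$ into the three defining conditions of $\mathcal{B}_C(S)$ for $V$; and reapplying (i) converts ``$V^+\otimes_S C$ weak injective'' into ``$\Hom_S(C,V)$ weak flat in $\Mod R$''. Proposition \ref{2.4}(1) then delivers $V\in\mathcal{WF}_C(S)$.

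The main technical obstacle is the duality $V^+\in\mathcal{A}_C(S^{op})\Leftrightarrow V\in\mathcal{B}_C(S)$: besides confirming the four listed natural isomorphisms, condition (A3) requires checking that under these adjunctions the canonical evaluation map $V^+\to\Hom_{R^{op}}(C,V^+\otimes_S C)$ is identified with $(\nu_V)^+$, so that its being an isomorphism forces $\nu_V$ itself to be one. Once this naturality verification is in place, the remainder is a routine assembly.
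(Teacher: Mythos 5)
Your proposal is correct and follows essentially the same route as the paper: the ``only if'' direction is word-for-word the paper's argument (adjunction plus the fact that $F^+$ is weak injective), and the ``if'' direction likewise starts from the right-module version of Proposition \ref{2.4}(2), passes through $V^+\otimes_SC\cong\Hom_S(C,V)^+$ to see that $\Hom_S(C,V)$ is weak flat, and finishes with Proposition \ref{2.4}(1). The one point where you diverge is the key intermediate claim $V^+\in\mathcal{A}_C(S^{op})\Rightarrow V\in\mathcal{B}_C(S)$: the paper simply invokes a non-commutative version of \cite[Proposition 7.2(b) and Remark 4]{HW07}, whereas you prove it directly from the four standard character-module isomorphisms (valid since ${}_SC$ and $C_R$ are $FP_\infty$, indeed the third needs no finiteness at all) together with the fact that $(-)^+$ is faithful and reflects isomorphisms. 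This buys self-containedness at the cost of the naturality check identifying $\mu_{V^+}$ with $(\nu_V)^+$, which you correctly flag as the delicate point but do not carry out; that verification is routine and is exactly what the cited result of Holm--White encapsulates, so the proposal stands as a sound, slightly more explicit version of the paper's proof.
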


\begin{proof} (1) ``Only if" part. Let $V\in \mathcal{WF}_C(S)$. Then by definition $V=C\otimes_RF$ for some weak flat left $R$-module $F$.
Since $F^+$ is weak injective in $\Mod R^{op}$ by \cite[Remark 2.2(2)]{GW15},
it follows that $V^+\cong\Hom_{R^{op}}(C,F^+)\in \mathcal{WI}_C(S^{op})$.

``If" part. Let $V^+\in \mathcal{WI}_C(S^{op})$. Then we have $V^+\in\mathcal{A}_C(S^{op})$ and $V^+\otimes_SC$ is weak injective over $R^{op}$ by Proposition \ref{2.4}(2).
It is not hard to check that $V\in\mathcal{B}_C(S)$ by a non-commutative version of \cite[Proposition 7.2(b) and Remark 4]{HW07}.
So we have the isomorphism: $V^+\otimes_SC\cong\Hom_S(C,V)^+$ by \cite[Lemma 2.16]{GT12} since $C$ is finitely presented.
It follows from \cite[Remark 2.2(2)]{GW15} that $\Hom_S(C,V)$ is weak flat. Thus $V\in \mathcal{WF}_C(S)$ by Proposition 2.4(1).

(2) The proof is similar to that of (1). \end{proof}

\begin{corollary}\label{2.7}  {The following statements hold.

$\mathrm{(1)}$ $V\in \mathcal{WF}_C(S)$ if and only if $V^{++}\in \mathcal{WF}_C(S)$.

$\mathrm{(2)}$ $U\in \mathcal{WI}_C(R)$ if and only if $U^{++}\in \mathcal{WI}_C(R)$.}
\end{corollary}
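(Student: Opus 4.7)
The strategy is to reduce both statements to a double application of Proposition \ref{2.6}. The key observation is that the bimodule ${}_SC_R$ is equally a (faithfully) semidualizing bimodule when viewed as ${}_{R^{op}}C_{S^{op}}$, since every axiom in 1.3 is invariant under the interchange $S\leftrightarrow R^{op}$, $R\leftrightarrow S^{op}$. Consequently Proposition \ref{2.6} holds verbatim in the ``opposite'' form: for a left $S^{op}$-module $V'$ one has $V'\in\mathcal{WI}_C(S^{op})$ iff $(V')^{+}\in\mathcal{WF}_C(S)$, and for a left $R^{op}$-module $U'$ one has $U'\in\mathcal{WF}_C(R^{op})$ iff $(U')^{+}\in\mathcal{WI}_C(R)$.

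Granting this symmetric availability, part (1) is immediate by chaining equivalences:
\[
V\in\mathcal{WF}_C(S)\ \Longleftrightarrow\ V^{+}\in\mathcal{WI}_C(S^{op})\ \Longleftrightarrow\ V^{++}\in\mathcal{WF}_C(S),
\]
where the first step is Proposition \ref{2.6}(1) applied to $V$, and the second is the opposite form of Proposition \ref{2.6}(2) applied to the left $S^{op}$-module $V^{+}$. Part (2) is the mirror argument: apply Proposition \ref{2.6}(2) to $U$, then the opposite form of Proposition \ref{2.6}(1) to the left $R^{op}$-module $U^{+}$.

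The only point requiring attention is the transfer of Proposition \ref{2.6} to the opposite bimodule, but this is essentially bookkeeping: the proof of \ref{2.6} relies exclusively on the tensor-evaluation isomorphism, the fact that $C$ is super finitely presented on both sides, the Auslander/Bass class characterizations from 1.3--1.4, and the identification $V^{+}\otimes_S C\cong \Hom_S(C,V)^{+}$ (and its analogues), all of which are manifestly symmetric under $(S,R)\leftrightarrow (R^{op},S^{op})$. Thus no genuine obstacle arises and the corollary is a formal consequence of \ref{2.6}.
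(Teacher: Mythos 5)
Your argument is correct and is exactly the mechanism behind the paper's one-line proof (``follows immediately from Proposition \ref{2.6}''): apply Proposition \ref{2.6} once, then its opposite-side form to the character module, using that ${}_SC_R$ is also faithfully semidualizing as an $(R^{op},S^{op})$-bimodule — a symmetry the paper itself invokes implicitly (e.g.\ when it applies Proposition \ref{2.4}(2) to $V^+$ in $\Mod S^{op}$). So your proposal matches the paper's approach, just with the bookkeeping made explicit.
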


\begin{proof} The assertions follows immediately from Proposition \ref{2.6}. \end{proof}

\begin{proposition}\label{2.8}  {The classes $\mathcal{WF}_C(S)$ and $\mathcal{WI}_C(R)$ are closed under direct summands,
direct products, direct sums and direct limits.}\end{proposition}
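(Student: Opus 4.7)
The natural strategy is to reduce everything to Proposition~\ref{2.4}. By that proposition, $V\in\mathcal{WF}_C(S)$ if and only if (i)~$V\in\mathcal{B}_C(S)$ and (ii)~$\Hom_S(C,V)\in\mathcal{WF}(R)$; the dual characterization holds for $\mathcal{WI}_C(R)$. So the plan is, for each of the four operations---direct summand, direct product, direct sum, and direct limit---to verify that both (i) and (ii) are simultaneously preserved. The argument for $\mathcal{WI}_C(R)$ is entirely dual, so in the write-up I would treat $\mathcal{WF}_C(S)$ in detail and indicate the dual passage at the end.

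\textbf{Handling condition (ii).} First I would record the commutation properties of $\Hom_S(C,-)$ that flow from the hypothesis that ${}_SC$ is super finitely presented. Concretely, $\Hom_S(C,-)$ commutes with direct summands and direct products automatically, with direct sums because $C$ is finitely generated, and with direct limits because $C$ is finitely presented. Once this is noted, condition (ii) for a product/sum/limit/summand of modules in $\mathcal{WF}_C(S)$ reduces to the corresponding closure property of $\mathcal{WF}(R)$, which is already established in \cite{GW15, GH15a}.

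\textbf{Handling condition (i).} This is the step where the super finite presentation of $C$ really does the work. The class $\mathcal{B}_C(S)$ is defined by the vanishing of $\Ext^i_S(C,-)$ and of $\Tor^R_i(C,\Hom_S(C,-))$ together with the isomorphism $\nu_{_N}$. Vanishing of $\Ext^i_S(C,-)$ passes to direct products tautologically, and to direct sums and direct limits because $C$ is $FP_\infty$ and so $\Ext^i_S(C,-)$ commutes with these colimits. For the Tor-vanishing, I would combine the commutations of $\Hom_S(C,-)$ above with the fact that $\Tor^R_i(C,-)$ commutes with direct sums and direct limits (standard) and with direct products (because ${}_RC$ is finitely presented, so each term in a finitely generated projective resolution of $C$ commutes with products on the tensor side). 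The isomorphism $\nu$ is natural, so on a product/sum/limit/summand it is the induced map between the product/sum/limit/summand of isomorphisms, hence again an isomorphism.

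\textbf{Main obstacle.} The only non-trivial point is the commutation of $\Tor^R_i(C,-)$ with arbitrary direct products: it uses that $C$ admits a degreewise finite projective resolution over $R$, so that $(\prod_j X_j)\otimes_R P$ splits as $\prod_j(X_j\otimes_R P)$ for each $P$ in the resolution and homology commutes with the product. Once the corresponding $\Ext$/$\Tor$/$\Hom$-commutations are all in hand, the remainder is a routine diagram chase relying on the naturality of $\mu$ and $\nu$, and the dual statement for $\mathcal{WI}_C(R)$ follows by symmetric reasoning using Proposition~\ref{2.4}(2) and the closure properties of $\mathcal{WI}(S)$ from \cite{GW15, GH15a}.
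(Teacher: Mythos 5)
Your proposal is correct and takes essentially the same route as the paper: reduce via Proposition~\ref{2.4} to the two conditions (membership in $\mathcal{B}_C(S)$, resp. $\mathcal{A}_C(R)$, plus weak flatness, resp. weak injectivity, of the image under $\Hom_S(C,-)$, resp. $C\otimes_R-$), then use the commutation of $\Hom_S(C,-)$ with summands, products, sums and direct limits together with the closure properties of $\mathcal{WF}(R)$ and $\mathcal{WI}(S)$. The only divergence is that the paper simply cites \cite[Proposition 4.2]{HW07} for closure of the Bass and Auslander classes under these operations, whereas you re-derive that closure from the definitions using the degreewise finite projective resolutions of $C$ (in particular for $\Tor^R_i(C,-)$ versus products), which is a valid, slightly more self-contained substitute for the citation.
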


\begin{proof} To prove $\mathcal{WF}_C(S)$ is closed under direct summands, we assume that
$$0\ra X_1\ra X_2\ra X_3\ra 0$$ is a split exact sequence in $\Mod S$ with $X_2\in \mathcal{WF}_C(S)$.
Then $X_2\in \mathcal{B}_C(S)$ and $\Hom_S(C,X_2)$ is weak flat by Proposition \ref{2.4}(1). It follows from \cite[Proposition 4.2]{HW07} that $X_1,X_3\in \mathcal{B}_C(S)$.
One easily checks that the sequence $$ 0\lra \Hom_S(C,X_1)\lra \Hom_S(C,X_2)\lra \Hom_S(C,X_3)\lra 0$$ in $\Mod R$ is split exact.
Then $\Hom_S(C,X_1)$ and $\Hom_S(C,X_3)$ are weak flat since the weak flat modules are closed under direct summands by \cite[Proposition 2.3]{GW15}.
It follows that $X_1,X_3\in \mathcal{WF}_C(S)$ by Proposition \ref{2.4}(1) again.

Let $\{F_\lambda\}_{\lambda\in \Lambda}$ be a family of $C$-weak flat modules in $\Mod S$.
Then $F_\lambda\in \mathcal{B}_C(S)$ and $\Hom_S(C,F_\lambda)$ is weak flat in $\Mod R$ for any $\lambda\in \Lambda$ by Proposition \ref{2.4}(1).
By \cite[Theorem 2.6]{Ro79}, we have the isomorphism
$$\prod_{\lambda\in \Lambda}\Hom_R(C,F_\lambda)\cong\Hom_R(C,\prod_{\lambda\in \Lambda}F_\lambda).$$
Note that $\prod_{\lambda\in \Lambda}\Hom_R(C,F_\lambda)$ is weak flat by \cite[Theorem 2.13]{GW15},
and $\prod_{\lambda\in \Lambda}F_\lambda\in\mathcal{B}_C(S)$ by \cite[Proposition 4.2]{HW07},
it follows that $\prod_{\lambda\in \Lambda}F_\lambda\in \mathcal{WF}_C(S)$ by Proposition \ref{2.4}(1).
Therefore $\mathcal{WF}_C(S)$ is closed under direct products.
Since $C$ is finitely presented and the weak flat modules are closed under direct sums,
similar to the arguments above, one can deduce that the class $\mathcal{WF}_C(S)$ is closed under direct sums.

Let $\{F_i\}_{i\in I}$ be a direct system of $C$-weak flat modules in $\Mod S$.
Then $F_i\in \mathcal{B}_C(S)$ for any $i\in I$ and $\{\Hom_S(C,F_i)\}_{i\in I}$
is a direct system of weak flat modules in $\Mod R$ by Proposition 2.4(1).
By \cite[Proposition 4.2]{HW07}, we get that $\lim F_i\in \mathcal{B}_C(S)$.
Note that the weak flat modules are closed under direct limits since $\Tor$ commutes with direct limits,
it follows that $\lim \Hom_S(C,F_i)$ is weak flat. By \cite[Lemma 2.7]{GT12}, we have the isomorphism
$$\lim\Hom_S(C,F_i)\cong\Hom_S(C,\lim F_i).$$ Then $\Hom_S(C,\lim F_i)$ is a weak flat left $R$-module.
Therefore, we have $\lim F_i\in \mathcal{WF}_C(S)$ by Proposition \ref{2.4}(1). Thus the class $\mathcal{WF}_C(S)$ is closed under direct limits.

As a similar argument to the above, we can deduce that the class $\mathcal{WI}_C(R)$ is closed under direct summands,
direct products, direct sums and direct limits. \end{proof}

The following lemma is stated in \cite{TW10} for a commutative ring, but the proof is valid in
the present context.

\begin{lemma}\label{2.9} (\cite[Theorem 2.8]{TW10}) {The following statements hold.

$\mathrm{(1)}$ If $M\in \Mod S$, then $M\in \mathcal{B}_C(S)$ if and only if $\Hom_S(C,M)\in\mathcal{A}_C(R)$.

$\mathrm{(2)}$ If $M\in \Mod R$, then $M\in \mathcal{A}_C(R)$ if and only if $C\otimes_RM\in\mathcal{B}_C(S)$. }
\end{lemma}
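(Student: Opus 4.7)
The plan is the following. The ``only if'' halves of both (1) and (2) are immediate from the Foxby equivalence $\mathcal{A}_C(R)\rightleftarrows\mathcal{B}_C(S)$ of \cite[Proposition 4.1]{HW07} recalled in 1.4: each of the functors $C\otimes_R-$ and $\Hom_S(C,-)$ sends the source class into the target class. So the content is in the two ``if'' directions, and the only non-formal ingredient those require is the faithfulness hypothesis on ${}_SC_R$. I will sketch (1); part (2) is strictly dual, with the roles of $\Hom_S(C,-)$ and $C\otimes_R-$ swapped and the other half of the faithfulness condition used in the form $C\otimes_R X=0\Rightarrow X=0$ for left $R$-modules (which follows from the stated version via the character module, since $(C\otimes_R X)^+\cong\Hom_{R^{op}}(C,X^+)$).

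For (1), set $N=\Hom_S(C,M)$ and assume $N\in\mathcal{A}_C(R)$. Condition (B2) for $M$ is simply (A1) for $N$, so it comes for free. For (B3), I examine the counit $\nu_M:C\otimes_R N\to M$ via the triangle identity $\Hom_S(C,\nu_M)\circ\mu_N=\mathrm{id}_N$; since (A3) for $N$ gives that $\mu_N$ is an isomorphism, so is $\Hom_S(C,\nu_M)$. Factor $\nu_M$ through its image $L$, producing
\[
0\to K\to C\otimes_R N\to L\to 0,\qquad 0\to L\to M\to Q\to 0,
\]
with $K=\ker\nu_M$ and $Q=\coker\nu_M$. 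Applying $\Hom_S(C,-)$ and using $\Ext^1_S(C,C\otimes_R N)=0$ from (A2) for $N$, a short diagram chase shows that the composition $\Hom_S(C,C\otimes_R N)\to\Hom_S(C,L)\hookrightarrow\Hom_S(C,M)$ being an isomorphism forces each factor to be an isomorphism; in particular $\Hom_S(C,K)=0$, and faithfulness of ${}_SC_R$ yields $K=0$.

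Consequently $L\cong C\otimes_R N\in\mathcal{B}_C(S)$ by the easy direction of Foxby equivalence, hence $\Ext^i_S(C,L)=0$ for all $i\geq 1$. Substituting this into the long exact Ext sequence attached to the second short exact sequence, together with the already established isomorphism $\Hom_S(C,L)\cong\Hom_S(C,M)$, yields $\Hom_S(C,Q)=0$; faithfulness again forces $Q=0$. Therefore $\nu_M$ is an isomorphism, which is (B3), and simultaneously $\Ext^i_S(C,M)=\Ext^i_S(C,L)=0$ for all $i\geq 1$, which is (B1). The main obstacle I anticipate is precisely the bookkeeping of the two long Ext sequences so that the two successive invocations of faithfulness line up correctly; once the diagram is organized, everything else is formal.
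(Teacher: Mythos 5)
Your argument is correct. Note, however, that the paper does not prove this lemma at all: it simply cites \cite[Theorem 2.8]{TW10} and observes that the commutative proof goes through verbatim for a faithfully semidualizing bimodule. Your blind proof is essentially that canonical argument made explicit, and it is sound: the ``only if'' halves are the easy direction of \cite[Proposition 4.1]{HW07}; for the ``if'' half of (1) you correctly reduce (B2) to (A1) for $N=\Hom_S(C,M)$, use the triangle identity $\Hom_S(C,\nu_M)\circ\mu_N=\mathrm{id}_N$ to see $\Hom_S(C,\nu_M)$ is an isomorphism, and then kill $\ker\nu_M$ and $\coker\nu_M$ by two applications of faithfulness; the identification $L\cong C\otimes_RN\in\mathcal{B}_C(S)$ then gives (B1) and (B3) simultaneously. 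Two small remarks: (i) in the first diagram chase the vanishing $\Ext^1_S(C,C\otimes_RN)=0$ from (A2) is not actually needed --- left exactness of $\Hom_S(C,-)$ together with injectivity and surjectivity of the composite already forces both factors to be isomorphisms and $\Hom_S(C,K)=0$; (A2) only enters implicitly later via $L\in\mathcal{B}_C(S)$, so nothing is lost, but the invocation is superfluous; (ii) your dual treatment of (2) is fine, with the order reversed (first $C\otimes_RQ=0$ from right exactness, giving $Q=0$, then $\Tor_1^R(C,L)=0$ from $L\cong\Hom_S(C,C\otimes_RM)\in\mathcal{A}_C(R)$ to conclude $C\otimes_RK=0$), and your derivation of the tensor-side faithfulness from condition (2) of 1.3 via $(C\otimes_RX)^+\cong\Hom_{R^{op}}(C,X^+)$ is exactly the right way to transport the hypothesis. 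So the proposal would serve as a self-contained substitute for the citation.
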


\begin{corollary}\label{2.10}  {The following statements hold.

$\mathrm{(1)}$ $\Hom_S(C,I)\in\mathcal{WI}_C(R)$ if and only if $I\in \mathcal{WI}(S)$.

$\mathrm{(2)}$  $C\otimes_RF\in \mathcal{WF}_C(S)$ if and only if $F\in \mathcal{WF}(R)$. }
\end{corollary}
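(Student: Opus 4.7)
The two statements are formally dual, so my plan is to prove (2) in detail and then note that (1) follows by the same argument with the roles of Auslander and Bass, and of $\otimes$ and $\Hom$, swapped. In each case one direction is immediate from the definition of $\mathcal{WF}_C(S)$ (resp.\ $\mathcal{WI}_C(R)$), so the content is in the ``only if'' direction, which I intend to deduce cleanly by combining Proposition \ref{2.4} with Lemma \ref{2.9}.

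For (2), first I would note that if $F \in \mathcal{WF}(R)$, then $C \otimes_R F \in \mathcal{WF}_C(S)$ holds by Definition \ref{2.1}. For the converse, I would assume $C \otimes_R F \in \mathcal{WF}_C(S)$ and invoke Proposition \ref{2.4}(1) to obtain simultaneously that $C \otimes_R F \in \mathcal{B}_C(S)$ and that $\Hom_S(C, C \otimes_R F)$ is weak flat over $R$. Then Lemma \ref{2.9}(2) gives $F \in \mathcal{A}_C(R)$, so the evaluation homomorphism $\mu_F \colon F \to \Hom_S(C, C \otimes_R F)$ is an isomorphism of left $R$-modules. Since the target is weak flat and weak flatness transfers across isomorphisms, $F \in \mathcal{WF}(R)$.

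For (1), the parallel argument runs as follows. If $I \in \mathcal{WI}(S)$, then $\Hom_S(C, I) \in \mathcal{WI}_C(R)$ by Definition \ref{2.1}. Conversely, suppose $\Hom_S(C, I) \in \mathcal{WI}_C(R)$. By Proposition \ref{2.4}(2) this forces $\Hom_S(C, I) \in \mathcal{A}_C(R)$ and $C \otimes_R \Hom_S(C, I)$ weak injective over $S$. Applying Lemma \ref{2.9}(1) yields $I \in \mathcal{B}_C(S)$, so the evaluation morphism $\nu_I \colon C \otimes_R \Hom_S(C, I) \to I$ is an isomorphism of left $S$-modules, and $I$ inherits weak injectivity from the source.

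I do not anticipate a genuine obstacle here: the corollary is essentially a bookkeeping consequence of the intrinsic characterization in Proposition \ref{2.4} together with the compatibility between the Auslander/Bass classes and the adjunction $(C \otimes_R -, \Hom_S(C, -))$ recorded in Lemma \ref{2.9}. The only point requiring mild care is that ``$\Hom_S(C, I) \in \mathcal{WI}_C(R)$'' a priori only means $\Hom_S(C, I) \cong \Hom_S(C, I')$ for some weak injective $I'$, so one must not try to identify $I$ with $I'$ directly; the use of Proposition \ref{2.4}(2) sidesteps this by characterizing $\mathcal{WI}_C(R)$-membership intrinsically, and then the Bass-class isomorphism $\nu_I$ transports weak injectivity back to $I$ itself.
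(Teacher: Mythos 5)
Your proof is correct and follows essentially the same route as the paper: in each part the ``if'' direction is definitional, and the ``only if'' direction is obtained by combining Proposition \ref{2.4} with Lemma \ref{2.9} and then transporting weak flatness/injectivity along the evaluation isomorphism coming from membership in $\mathcal{A}_C(R)$ or $\mathcal{B}_C(S)$. The only (harmless) difference is in the endgame of (1): the paper introduces the defining witness $I'$ and uses Theorem \ref{2.2} to conclude via $I\cong C\otimes_R\Hom_S(C,I)\cong C\otimes_R\Hom_S(C,I')\cong I'$, whereas you finish more directly by noting that Proposition \ref{2.4}(2) already yields weak injectivity of $C\otimes_R\Hom_S(C,I)$ and then using $\nu_I$ -- both arguments are valid.
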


\begin{proof} (1) ``If" part is by definition.

``Only if" part.  Since $\Hom_S(C,I)\in\mathcal{WI}_C(R)$, then we have $\Hom_S(C,I)\in \mathcal{A}_C(R)$ by Proposition \ref{2.4}(2),
and so $I\in\mathcal{B}_C(S)$ by Lemma \ref{2.9}(1).
On the other hand, there exists a weak injective left $S$-module $I'$ such that $\Hom_S(C,I)=\Hom_S(C,I')$.
Also, we have $I'\in\mathcal{B}_C(S)$ by Theorem \ref{2.2}.
Thus we have the following isomorphism: $$I\cong C\otimes_R\Hom_S(C,I)\cong C\otimes_R\Hom_S(C,I')\cong I'.$$ Hence $I$ is a weak injective left $S$-module, as desired.

(2) Similar to the proof of (1). \end{proof}

 It was shown in \cite[Propositions 2.7(2) and 2.10(2)]{BGH14} that the classes of weak flat and weak injective modules are both closed under pure submodules and pure quotients. Here we have

\begin{proposition}\label{2.11}  {The following statements hold.

$\mathrm{(1)}$  The class $\mathcal{WF}_C(S)$ is closed under pure submodules and pure quotients.

$\mathrm{(2)}$  The class $\mathcal{WI}_C(R)$ is closed under pure submodules and pure quotients. }
\end{proposition}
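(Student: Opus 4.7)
The plan is to exploit character-module duality to reduce the closure under pure submodules and pure quotients to the closure under direct summands, which we already have from Proposition \ref{2.8}, combined with the ``double-dual invariance'' of $\mathcal{WF}_C(S)$ and $\mathcal{WI}_C(R)$ provided by Corollary \ref{2.7}.

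For part (1), I would start with a pure exact sequence $0\to A\to B\to D\to 0$ in $\Mod S$ with $B\in \mathcal{WF}_C(S)$. Because $\mathbb{Q}/\mathbb{Z}$ is an injective cogenerator, purity of this sequence is equivalent to the splitting of the character-dual sequence $0\to D^+\to B^+\to A^+\to 0$ in $\Mod S^{op}$. Dualizing once more preserves splitting, so $B^{++}\cong A^{++}\oplus D^{++}$ and both $A^{++}$ and $D^{++}$ are direct summands of $B^{++}$. Now Corollary \ref{2.7}(1) yields $B^{++}\in \mathcal{WF}_C(S)$, Proposition \ref{2.8} then gives $A^{++},D^{++}\in \mathcal{WF}_C(S)$, and a final application of Corollary \ref{2.7}(1) in the reverse direction produces $A,D\in \mathcal{WF}_C(S)$, as required.

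Part (2) is proved by the verbatim dual argument, substituting Corollary \ref{2.7}(2) and the half of Proposition \ref{2.8} concerning $\mathcal{WI}_C(R)$: a pure submodule $U'$ or pure quotient $U''$ of $U\in \mathcal{WI}_C(R)$ produces a split decomposition of $U^{++}$, and two applications of the ``iff $U^{++}\in \mathcal{WI}_C(R)$'' criterion around the direct-summand closure finish the job.

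I do not foresee a real obstacle; the only point that deserves attention is verifying that \emph{both} the pure submodule $A$ and the pure quotient $D$ become direct summands of $B^{++}$ after the double dual, which is transparent from the splitting of the short exact sequence after applying $(-)^+$ (since a split short exact sequence remains split after any additive functor). All structural ingredients---character-module duality for purity, Corollary \ref{2.7}, and the direct-summand closure in Proposition \ref{2.8}---are already in place, so the rest is routine bookkeeping.
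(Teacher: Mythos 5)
Your argument is correct, and it is genuinely different from the one in the paper. You reduce everything to the classical fact that a short exact sequence is pure exact if and only if its character dual splits (available in the paper's references, e.g.\ [EJ00] or [GT12]), then note that applying $(-)^+$ once more turns the split sequence into a direct-sum decomposition $B^{++}\cong A^{++}\oplus D^{++}$, and finish with Corollary \ref{2.7} (double-dual invariance of $\mathcal{WF}_C(S)$ and $\mathcal{WI}_C(R)$) together with the direct-summand closure from Proposition \ref{2.8}; since both of those results precede Proposition \ref{2.11}, there is no circularity, and the dual case (2) goes through verbatim as you say. The paper instead applies $\Hom_S(C,-)$ to the pure exact sequence: because $C$ is finitely presented the sequence stays exact, and via the adjunction $\Hom_R(Q,\Hom_S(C,\mathbb{Y}))\cong\Hom_S(C\otimes_RQ,\mathbb{Y})$ it stays \emph{pure} exact; then the closure of weak flat modules under pure submodules and pure quotients (from [BGH14]) applies to $\Hom_S(C,Y_i)$, and Theorem \ref{2.2}, Lemma \ref{2.9} and Proposition \ref{2.4} pull the membership back to $\mathcal{B}_C(S)$ and hence to $\mathcal{WF}_C(S)$. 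The paper's route yields along the way the independently useful observation that $\Hom_S(C,-)$ preserves pure exactness and keeps the argument inside the Foxby-adjunction machinery; your route is shorter and more self-contained, bypassing the purity-closure results for weak flat and weak injective modules entirely and using only the duality criterion of Corollary \ref{2.7} plus summand closure. The only housekeeping point is to cite the purity--splitness equivalence explicitly, since the paper's definition 1.9 of purity does not state it.
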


\begin{proof} (1)
Let $$\mathbb{Y}=\ \ 0\ra Y_1\ra Y_2\ra Y_3\ra 0$$ be a pure exact sequence in $\Mod S$ with $Y_2\in \mathcal{WF}_C(S)$.
We will show that $Y_1,Y_3\in\mathcal{WF}_C(S)$.  Since $C$ is a finitely presented left $S$-module, the sequence
$$\Hom_S(C,\mathbb{Y})=\ \ 0\lra \Hom_S(C,Y_1)\lra \Hom_S(C,Y_2)\lra \Hom_S(C,Y_3)\lra 0$$ is exact in $\Mod R$.
We claim that $\Hom_S(C,\mathbb{Y})$ is pure exact. Let $Q$ be a finitely presented left $R$-module. It is easy to check that $C\otimes_RQ$ is a finitely presented left $S$-module. By the natural isomorphism
$$\Hom_R(Q,\Hom_S(C,\mathbb{Y}))\cong\Hom_S(C\otimes_RQ,\mathbb{Y}).$$ Then we have $\Hom_S(C\otimes_RQ,\mathbb{Y})$
 is an exact sequence since $\mathbb{Y}$ is pure exact.
It follows that $\Hom_R(Q,\Hom_S(C,\mathbb{Y}))$ is exact, and hence $\Hom_S(C,\mathbb{Y})$ is a pure exact sequence.

 In the pure exact sequence $\Hom_S(C,\mathbb{Y})$, the module $\Hom_S(C,Y_2)$ is a weak flat left $R$-module by Proposition \ref{2.4}(1) since $Y_2$ is $C$-weak flat. Note that the class of weak flat modules is closed under pure submodules and pure quotients, it follows that $\Hom_S(C,Y_1)$ and $\Hom_S(C,Y_3)$ are weak flat left $R$-modules. Hence $\Hom_S(C,Y_1)$ and $\Hom_S(C,Y_3)$ belong to $\mathcal{A}_C(R)$ by Theorem \ref{2.2}. By Lemma \ref{2.9}(1), we have $Y_1,Y_3$ belong to $\mathcal{B}_C(S)$.
Consequently, $Y_1,Y_3\in \mathcal{WF}_C(S)$ by Proposition \ref{2.4}(1).

(2) By analogy with the proof of (1), one can deduce that the class $\mathcal{WI}_C(R)$ is closed under pure submodules and pure quotients. \end{proof}

\begin{theorem}\label{2.12}  {The following statements hold.

$\mathrm{(1)}$ The class $\mathcal{WF}_C(S)$ is covering and preenveloping.

$\mathrm{(2)}$ The class $\mathcal{WI}_C(R)$ is covering and preenveloping.}
\end{theorem}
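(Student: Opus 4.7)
The plan is to deduce this theorem as an immediate consequence of the closure properties established in Propositions \ref{2.5}, \ref{2.8}, and \ref{2.11}, combined with standard existence theorems for (pre)covers and preenvelopes of classes closed under purity. All the substantive homological work has already been done in those three propositions, so the remaining task is essentially to match hypotheses; no new \emph{ad hoc} constructions should be needed.

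For part (1), I would first show that $\mathcal{WF}_C(S)$ is precovering. The class is closed under direct sums (Proposition \ref{2.8}) and under pure quotients (Proposition \ref{2.11}). By a standard Kaplansky-style cardinality argument, such a class is automatically precovering: one fixes a cardinal $\kappa$ bounding the cardinalities of $S$ and $C$, observes that every member of $\mathcal{WF}_C(S)$ is a $\kappa$-directed union of $\leq\kappa$-presented $C$-weak flat submodules (using pure-quotient closure plus the Kaplansky decomposition), and obtains a precover of any target $M\in\Mod S$ as a direct sum indexed by a representative set of such small $C$-weak flat modules mapping into $M$. Combining this with closure under direct limits (Proposition \ref{2.8}) and the well-known criterion that every precovering class closed under direct limits is covering, one concludes that $\mathcal{WF}_C(S)$ is covering.

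For preenveloping, I would exploit the dual pattern: closure under direct products (Proposition \ref{2.8}) together with closure under pure submodules (Proposition \ref{2.11}) forces $\mathcal{WF}_C(S)$ to be preenveloping. Concretely, given $M\in\Mod S$, fix the bounding cardinal $\kappa$ as above, pick a representative set $\{F_i\}_{i\in I}$ of $\leq\kappa$-presented members of $\mathcal{WF}_C(S)$, and form the canonical map $M\to\prod_{i\in I} F_i^{\Hom_S(M,F_i)}$; every morphism from $M$ into a member $N$ of $\mathcal{WF}_C(S)$ factors, via the standard observation that $f(M)$ is contained in a pure submodule of $N$ of cardinality $\leq\kappa$ (which is then itself in $\mathcal{WF}_C(S)$ by pure-submodule closure), through one of the $F_i$, so this product map is the desired preenvelope.

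Part (2) for $\mathcal{WI}_C(R)$ is entirely parallel, since Propositions \ref{2.5}, \ref{2.8}, and \ref{2.11} grant it exactly the same closure properties. The one genuine subtlety I expect to be the main obstacle is verifying the uniform Kaplansky bound: one must check that a single cardinal $\kappa$ works for the whole class, which rests on pure-submodule/pure-quotient closure together with the fact that $C$ is super finitely presented on both sides, and will require careful but routine set-theoretic bookkeeping rather than any further homological input.
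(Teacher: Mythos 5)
Your proposal is correct, and for the covering half it is in substance the paper's own argument: the paper simply cites \cite[Theorem 2.5]{HJ08}, which says precisely that a class closed under pure quotients (Proposition \ref{2.11}) and direct sums (Proposition \ref{2.8}) is covering, and the proof of that cited theorem is the cardinality/factorization argument plus Enochs' ``precovering and closed under direct limits implies covering'' that you sketch. Where you genuinely diverge is the preenveloping half: the paper establishes that $\mathcal{WF}_C(S)$ (resp.\ $\mathcal{WI}_C(R)$) is a Kaplansky class via \cite[Proposition 3.2]{HJ08} and then invokes \cite[Theorem 2.5]{EL02}, which needs closure under direct limits; you instead use closure under direct products together with pure submodules and build the preenvelope directly as the canonical map into a product indexed by a representative set of $\leq\kappa$-sized members, factoring any map $M\to N$ through a small pure submodule of $N$ lying in the class. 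This is the Rada--Saor\'{\i}n/Holm--J{\o}rgensen purity criterion rather than the Kaplansky-class route; both are legitimate since Propositions \ref{2.8} and \ref{2.11} supply all four closure properties, and your route has the mild advantage of not needing direct limits for the envelope side, while the paper's route outsources all set-theoretic bookkeeping to the cited results. One caution on your precovering sketch: writing each member of the class as a directed union of small submodules does not by itself make the direct-sum candidate a precover, since a map $G\to M$ need not factor through any single small submodule of $G$; the correct device (and the one used in \cite{HJ08}) is that such a map factors through a small \emph{pure quotient} of $G$, which is again in the class by Proposition \ref{2.11}. With that adjustment your argument is complete.
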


\begin{proof} (1) Since the class $\mathcal{WF}_C(S)$ is closed under pure quotients by Proposition \ref{2.11} and is closed under direct sums by Proposition \ref{2.8}, one gets directly that $\mathcal{WF}_C(S)$ is covering by \cite[Theorem 2.5]{HJ08}. On the other hand, it follows from Proposition \ref{2.11} and \cite[Proposition 3.2]{HJ08} that $\mathcal{WF}_C(S)$ is a Kaplansky class. Also, the class $\mathcal{WF}_C(S)$ is closed under direct limits by Proposition \ref{2.8}. Therefore $\mathcal{WF}_C(S)$ is preenveloping by \cite[Theorem 2.5]{EL02}.

(2) The proof is similar to that of (1). \end{proof}

\section{Foxby equivalence}

In this section we investigate Foxby equivalence relative to $C$-weak injective and $C$-weak flat modules. Some known results in \cite{HW07} are generalized.

\begin{proposition}\label{3.1} {There are equivalences of categories
$$\xymatrix@C=80pt{\mathcal{WF}(R) \ar@<+4pt>[r]^{C\otimes_R-} & \mathcal{WF}_C(S) \ar@<+5pt>[l]^{\Hom_S(C,-)}_{\sim}\\
\mathcal{WI}_C(R) \ar@<+4pt>[r]^{C\otimes_R-} & \mathcal{WI}(S) \ar@<+5pt>[l]^{\Hom_S(C,-)}_{\sim}. }
$$}
\end{proposition}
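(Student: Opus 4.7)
The plan is to bootstrap the Holm--White Foxby equivalence $\mathcal{A}_C(R)\leftrightarrow\mathcal{B}_C(S)$ of \cite[Proposition 4.1]{HW07} down to the four subcategories in the statement. All the ingredients have already been assembled: by Theorem \ref{2.2} one has the containments $\mathcal{WF}(R)\subseteq\mathcal{A}_C(R)$ and $\mathcal{WI}(S)\subseteq\mathcal{B}_C(S)$; by Proposition \ref{2.4}(1) and (2) one has $\mathcal{WF}_C(S)\subseteq\mathcal{B}_C(S)$ and $\mathcal{WI}_C(R)\subseteq\mathcal{A}_C(R)$. Consequently, the unit $\mu_M\colon M\to\Hom_S(C,C\otimes_RM)$ and counit $\nu_N\colon C\otimes_R\Hom_S(C,N)\to N$ are isomorphisms on all four subcategories, which handles the ``equivalence'' part for free.

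What remains is to check that the two functors genuinely restrict to the claimed subcategories. For the top row, if $F\in\mathcal{WF}(R)$ then $C\otimes_RF\in\mathcal{WF}_C(S)$ directly from Definition \ref{2.1}; conversely, if $V\in\mathcal{WF}_C(S)$ then Proposition \ref{2.4}(1) yields $\Hom_S(C,V)\in\mathcal{WF}(R)$. For the bottom row the situation is symmetric: if $I\in\mathcal{WI}(S)$ then $\Hom_S(C,I)\in\mathcal{WI}_C(R)$ by Definition \ref{2.1}, while if $U\in\mathcal{WI}_C(R)$ then Proposition \ref{2.4}(2) supplies $C\otimes_RU\in\mathcal{WI}(S)$.

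I would then close the argument by noting that the two round-trips agree with $\mathrm{id}$ through $\mu$ and $\nu$: since $\mathcal{WF}(R)\subseteq\mathcal{A}_C(R)$, for $F\in\mathcal{WF}(R)$ the map $\mu_F\colon F\to\Hom_S(C,C\otimes_RF)$ is an isomorphism; since $\mathcal{WF}_C(S)\subseteq\mathcal{B}_C(S)$, for $V\in\mathcal{WF}_C(S)$ the map $\nu_V\colon C\otimes_R\Hom_S(C,V)\to V$ is an isomorphism. The same applies to the bottom pair using $\mathcal{WI}_C(R)\subseteq\mathcal{A}_C(R)$ and $\mathcal{WI}(S)\subseteq\mathcal{B}_C(S)$. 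Naturality is inherited from the parent equivalence and requires no further verification.

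There is no real obstacle here; the whole proposition is an immediate harvest of the material built in Section 2. If anything needed careful phrasing, it would be emphasizing which inclusion (Theorem \ref{2.2} versus Proposition \ref{2.4}) is doing the work in each of the four quadrants, so that the reader sees at a glance why the functors stabilize each subcategory and why $\mu$ and $\nu$ remain isomorphisms throughout. Corollary \ref{2.10} can optionally be cited to make the ``reflecting'' direction (e.g.\ that $C\otimes_RF\in\mathcal{WF}_C(S)$ forces $F\in\mathcal{WF}(R)$) completely explicit, though it is not strictly needed for the statement as written.
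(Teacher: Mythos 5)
Your proposal is correct and follows essentially the same route as the paper's own proof: the functors restrict by Definition \ref{2.1} in one direction and Proposition \ref{2.4} in the other, and the containments $\mathcal{WF}(R)\subseteq\mathcal{A}_C(R)$, $\mathcal{WF}_C(S)\subseteq\mathcal{B}_C(S)$ (resp.\ their duals) from Theorem \ref{2.2} and Proposition \ref{2.4} make $\mu$ and $\nu$ isomorphisms, exactly as in the paper. The only cosmetic difference is that the paper treats the top row and invokes duality for the bottom one, while you spell out both.
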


\begin{proof}  It suffices to prove the first assertion. Dually, we get the second one.

We have that the functor $C\otimes_R-$ maps $\mathcal{WF}(R)$ to $\mathcal{WF}_C(S)$ by definition, and
the functor $\Hom_S(C,-)$ maps $\mathcal{WF}_C(S)$ to $\mathcal{WF}(R)$ by Proposition \ref{2.4}(1).
On the other hand, if $M\in\mathcal{WF}(R)$ and $N\in\mathcal{WF}_C(S)$, then $M\in \mathcal{A}_C(R)$ by Theorem \ref{2.2} and $N\in\mathcal{B}_C(S)$ by Proposition \ref{2.4}(1).
Then there exist natural isomorphisms: $M\cong \Hom_S(C,C\otimes_RM)$ and $N\cong C\otimes_R\Hom_S(C,M)$. Thus the assertion follows. \end{proof}

Let $n$ be a non-negative integer. For convenience, we set
\[\begin{array}{ll}\vspace{0.1cm}
\mathcal{WF}(R)_{\leq n}=\mbox{the class of left $R$-modules of weak flat dimension at most $n$,}\\\vspace{0.1cm}
\mathcal{WI}(S)_{\leq n}=\mbox{the class of left $S$-modules of weak injective dimension at most $n$,}\\\vspace{0.1cm}
\mathcal{WF}_C(S)_{\leq n}=\mbox{the class of left $S$-modules of $C$-weak flat dimension at most $n$,}\\\vspace{0.1cm}
\mathcal{WI}_C(R)_{\leq n}=\mbox{the class of left $R$-modules of $C$-weak injective dimension at most $n$.}\\
 \end{array}\]

\begin{proposition}\label{3.2} {There are equivalences of categories
$$\xymatrix@C=80pt{\mathcal{WF}(R)_{\leq n} \ar@<+4pt>[r]^{C\otimes_R-} & \mathcal{WF}_C(S)_{\leq n} \ar@<+5pt>[l]^{\Hom_S(C,-)}_{\sim}\\
\mathcal{WI}_C(R)_{\leq n} \ar@<+4pt>[r]^{C\otimes_R-} & \mathcal{WI}(S)_{\leq n} \ar@<+5pt>[l]^{\Hom_S(C,-)}_{\sim}. }
$$}
\end{proposition}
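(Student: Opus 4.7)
The plan is to upgrade the equivalence established in Proposition~\ref{3.1} by showing that $C\otimes_R-$ and $\Hom_S(C,-)$ preserve the weak flat (resp. weak injective) dimension of a module, so that they restrict to mutually inverse equivalences on the subcategories of modules of dimension at most $n$.

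I will treat only the upper row, as the lower row is formally dual. Suppose $M\in\mathcal{WF}(R)_{\leq n}$ and choose a weak flat resolution
\[
0\lra F_n\lra F_{n-1}\lra\cdots\lra F_0\lra M\lra 0
\]
in $\Mod R$. By Theorem~\ref{2.2} each $F_i$ lies in $\mathcal{A}_C(R)$, and by Corollary~\ref{2.3} so does $M$. Hence $\Tor_i^R(C,F_j)=0=\Tor_i^R(C,M)$ for all $i\geq 1$, and a routine dimension-shift (break the resolution into short exact sequences and induct on $n$) shows that $C\otimes_R-$ preserves exactness, yielding
\[
0\lra C\otimes_R F_n\lra\cdots\lra C\otimes_R F_0\lra C\otimes_R M\lra 0.
\]
Thus $C\otimes_R M\in\mathcal{WF}_C(S)_{\leq n}$. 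Conversely, suppose $N\in\mathcal{WF}_C(S)_{\leq n}$ with a resolution
\[
0\lra C\otimes_R F_n\lra\cdots\lra C\otimes_R F_0\lra N\lra 0,
\]
where each $F_i\in\mathcal{WF}(R)$. By Proposition~\ref{2.4}(1) every $C\otimes_R F_i$ lies in $\mathcal{B}_C(S)$, so the class $\mathcal{B}_C(S)$, being closed under kernels of epimorphisms and extensions (cf.\ \cite[Theorem 6.2]{HW07}), contains $N$. In particular $\Ext_S^{i}(C,C\otimes_R F_j)=0=\Ext_S^i(C,N)$ for all $i\geq 1$, so the same dimension-shifting argument makes $\Hom_S(C,-)$ exact on this resolution. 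Since $F_j\in\mathcal{A}_C(R)$ the canonical map $F_j\to\Hom_S(C,C\otimes_R F_j)$ is an isomorphism, producing a weak flat resolution
\[
0\lra F_n\lra\cdots\lra F_0\lra \Hom_S(C,N)\lra 0
\]
of length $n$. Thus $\Hom_S(C,N)\in\mathcal{WF}(R)_{\leq n}$.

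The natural isomorphisms $M\cong\Hom_S(C,C\otimes_R M)$ and $N\cong C\otimes_R\Hom_S(C,N)$ are immediate because $M\in\mathcal{A}_C(R)$ and $N\in\mathcal{B}_C(S)$. Therefore the pair $(C\otimes_R-,\Hom_S(C,-))$ restricts to an equivalence $\mathcal{WF}(R)_{\leq n}\simeq\mathcal{WF}_C(S)_{\leq n}$. Running the dual argument --- replacing weak flat by weak injective, $\Tor$ by $\Ext$, resolutions by coresolutions, and interchanging the roles of $\mathcal{A}_C(R)$ and $\mathcal{B}_C(S)$ --- gives the equivalence $\mathcal{WI}_C(R)_{\leq n}\simeq\mathcal{WI}(S)_{\leq n}$.

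The only step that requires real care is the transfer of exactness along the resolution; this is where the combination of Theorem~\ref{2.2}, Corollary~\ref{2.3}, Proposition~\ref{2.4} and Lemma~\ref{2.9} is essential, since it guarantees the vanishing of $\Tor_i^R(C,-)$ (resp.\ $\Ext_S^i(C,-)$) on every syzygy that appears. Once exactness is established, identifying the resulting complex as a $C$-weak flat resolution (or conversely a weak flat one) is purely formal.
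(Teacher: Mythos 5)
Your proposal is correct and follows essentially the same route as the paper's proof: tensor the weak flat resolution and use Theorem \ref{2.2}/Corollary \ref{2.3} to kill the relevant $\Tor$'s, and in the converse apply $\Hom_S(C,-)$ after placing the modules $C\otimes_RF_i$ and their syzygies in $\mathcal{B}_C(S)$, then use $\mu_{F_i}$ to recover a weak flat resolution of $\Hom_S(C,N)$. One small slip: to get $N\in\mathcal{B}_C(S)$ from the left-hand resolution you need closure of $\mathcal{B}_C(S)$ under cokernels of monomorphisms (the two-of-three property of \cite[Corollary 6.3]{HW07}, equivalently Proposition \ref{3.3}(1)), not closure under kernels of epimorphisms and extensions as you cite; with that reference corrected your argument coincides with the paper's.
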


\begin{proof}  We only prove the first assertion, and the second one is dual.

The case $n=0$ holds by Proposition \ref{3.1}. Now suppose that $n\geq 1$ and $M\in\mathcal{WF}(R)_{\leq n}$. Then there exists an exact sequence
$$0\lra F_n\overset{f_n}\lra \cdots\overset{f_2}\lra F_1\overset{f_1}\lra F_0\lra M\lra 0\eqno{(3.1)}$$
in $\Mod R$ with each $F_i$ weak flat. By Corollary \ref{2.3}, we have $\coker(f_i)\in\mathcal{A}_C(R)$ for $1\leq i\leq n$.
Applying the functor $C\otimes_R-$ to (3.1), we get an exact sequence
$$0\lra C\otimes_RF_n\lra \cdots\lra C\otimes_RF_1\lra C\otimes_RF_0\lra C\otimes_RM\lra 0$$ in $\Mod S$.
Note that each $C\otimes_RF_i$ is $C$-weak flat, it follows that
$C$-$\wfd_S(C\otimes_RM)\leq n$, and thus $C\otimes_RM\in\mathcal{WF}_C(S)_{\leq n}$.

Conversely, assume that $M\in\mathcal{WF}_C(S)_{\leq n}$. Then there is an exact sequence
 $$0\lra C\otimes_RQ_n\overset{1_C\otimes_Rf_n}\lra \cdots\overset{1_C\otimes_Rf_2}\lra C\otimes_RQ_1\overset{1_C\otimes_Rf_1}\lra C\otimes_RQ_0\lra M\lra 0\eqno{(3.2)}$$ in $\Mod S$ with each $Q_i$ a weak flat left $R$-module.
Because $Q_i\in\mathcal{A}_C(R)$ by Theorem \ref{2.2}, we have each $C\otimes_RQ_i\in \mathcal{B}_C(S)$ by \cite[Proposition 4.1]{HW07}.
 It follows from \cite[Corollary 6.3]{HW07} that all $\coker(1_C\otimes_Rf_i)$ in (3.2) are in $\mathcal{B}_C(S)$.
So we get the following exact sequence
 $$0\ra \Hom_S(C,C\otimes_RQ_n)\ra \cdots\ra  \Hom_S(C,C\otimes_RQ_0)\ra \Hom_S(C,M)\ra 0.$$
Note that $\mu_{Q_i}:Q_i\lra \Hom_S(C,C\otimes_RQ_i)$ is an isomorphism for any $1\leq i\leq n$, which gives rise to the exactness of
 $$0\lra Q_n\lra \cdots\lra Q_1\lra Q_0\lra \Hom_S(C,M)\lra 0.$$ Thus $\Hom_S(C,M)\in\mathcal{WF}(R)_{\leq n}$, as desired. \end{proof}

\begin{proposition}\label{3.3}  {For any integer $n\geq 0$, the following statements hold.

$\mathrm{(1)}$ If $C$-$\wfd_S(M)\leq n$, then $M\in \mathcal{B}_C(S)$.

$\mathrm{(2)}$ If $C$-$\wid_R(M)\leq n$, then $M\in \mathcal{A}_C(R)$.}
\end{proposition}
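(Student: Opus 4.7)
The plan is to prove part (1) by induction on $n$ and derive part (2) by the evident dualization, since the definitions and ambient results are entirely symmetric. The essential content of both statements is that a finite resolution by objects already known to lie in $\mathcal{B}_C(S)$ (respectively $\mathcal{A}_C(R)$) forces membership in the class itself, a standard consequence of the two-out-of-three closure of these classes under short exact sequences.

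First I would dispose of the base case $n=0$. If $C\mbox{-}\wfd_S(M)\leq 0$ then $M\cong C\otimes_RF_0$ for some weak flat $F_0\in\Mod R$, so Theorem \ref{2.2} gives $F_0\in\mathcal{A}_C(R)$, and Lemma \ref{2.9}(2) (equivalently, the Foxby equivalence of \cite[Proposition~4.1]{HW07}) transports $F_0$ across to $M=C\otimes_RF_0\in\mathcal{B}_C(S)$. Dually, for (2) at $n=0$ one has $M\cong\Hom_S(C,I^0)$ with $I^0$ weak injective, hence $I^0\in\mathcal{B}_C(S)$ by Theorem \ref{2.2} and Lemma \ref{2.9}(1) gives $M\in\mathcal{A}_C(R)$.

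For the inductive step with $n\geq 1$, I would split a witness resolution
$$0\lra C\otimes_RF_n\lra\cdots\lra C\otimes_RF_1\lra C\otimes_RF_0\lra M\lra 0$$
into the short exact sequence $0\to K\to C\otimes_RF_0\to M\to 0$, where $K=\ker(C\otimes_RF_0\to M)$. The truncated resolution $0\to C\otimes_RF_n\to\cdots\to C\otimes_RF_1\to K\to 0$ certifies $C\mbox{-}\wfd_S(K)\leq n-1$, so the inductive hypothesis gives $K\in\mathcal{B}_C(S)$; combined with $C\otimes_RF_0\in\mathcal{B}_C(S)$ from the base case, the two-out-of-three closure of $\mathcal{B}_C(S)$ (\cite[Theorems~6.2 and~6.3]{HW07}, already invoked in the proof of Proposition \ref{2.5}) forces $M\in\mathcal{B}_C(S)$. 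Part (2) is dual, using the coresolution by modules $\Hom_S(C,I^i)$, the equivalence $\Hom_S(C,-)\colon\mathcal{B}_C(S)\to\mathcal{A}_C(R)$, and the dual two-out-of-three property of $\mathcal{A}_C(R)$.

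The main obstacle I anticipate is purely bookkeeping: one must verify that the witness resolution truncates cleanly so that the kernel $K$ inherits a $C$-weak flat resolution of length exactly one less. There is no substantive homological difficulty, because every closure property of $\mathcal{A}_C(R)$ and $\mathcal{B}_C(S)$ required for the induction has already been established in \cite{HW07} and has been used freely elsewhere in the present paper. Consequently, the proof amounts to assembling these existing facts along the inductive chain rather than producing any new machinery.
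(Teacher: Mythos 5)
Your proof is correct and follows essentially the same route as the paper: the paper also reduces to the case $n=0$ via Proposition \ref{2.4}(1) (your Theorem \ref{2.2} plus Lemma \ref{2.9} argument is the same content) and then chases the finite resolution by modules $C\otimes_RF_i\in\mathcal{B}_C(S)$ using the two-out-of-three closure from \cite[Theorem 6.2]{HW07}, merely phrased as ``every cokernel in the resolution lies in $\mathcal{B}_C(S)$'' rather than as an explicit induction on $n$. Your dimension-shift induction is just a repackaging of that same decomposition, so there is nothing to correct.
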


\begin{proof} It suffices to prove the first assertion. Dually, we get the second one.

If $n=0$, then the assertion follows by Proposition \ref{2.4}(1). Now suppose $n\geq 1$, then there exists an exact sequence
$$0\ra C\otimes_RF_n\ra\cdots\ra C\otimes_RF_1\ra C\otimes_RF_0\ra M\ra 0 \eqno{(3.3)}$$ in $\Mod S$ with each $F_i$ a weak flat left $R$-module.
Since $C\otimes_RF_i\in\mathcal{B}_C(S)$ for any $0\leq i\leq n$, we have every cokernel in (3.3) is in $\mathcal{B}_C(S)$ by \cite[Theorem 6.2]{HW07}. Thus $M\in\mathcal{B}_C(S)$.\end{proof}

The following theorem is one of main results in this paper.

\begin{theorem}\label{3.4} {\rm (Foxby Equivalence)} {There are equivalences of categories
$$\xymatrix@R=20pt@C=60pt{
\mathcal{WF}(R) \ar@<+4pt>[r]^{C\otimes_R-}\ar@{^{(}->}[d] & \mathcal{WF}_C(S) \ar@<+5pt>[l]^{\Hom_S(C,-)}_{\sim}\ar@{^{(}->}[d]\\
\mathcal{WF}(R)_{\leq n} \ar@<+4pt>[r]^{C\otimes_R-}\ar@{^{(}->}[d] & \mathcal{WF}_C(S)_{\leq n} \ar@<+5pt>[l]^{\Hom_S(C,-)}_{\sim}\ar@{^{(}->}[d]\\
\mathcal{A}_C(R) \ar@<+4pt>[r]^{C\otimes_R-} & \mathcal{B}_C(S) \ar@<+5pt>[l]^{\Hom_S(C,-)}_{\sim}\\
\mathcal{WI}_C(R)_{\leq n} \ar@<+4pt>[r]^{C\otimes_R-}\ar@{^{(}->}[u] & \mathcal{WI}(S)_{\leq n} \ar@<+5pt>[l]^{\Hom_S(C,-)}_{\sim}\ar@{^{(}->}[u]\\
\mathcal{WI}_C(R)           \ar@<+4pt>[r]^{C\otimes_R-}\ar@{^{(}->}[u] & \mathcal{WI}(S).          \ar@<+5pt>[l]^{\Hom_S(C,-)}_{\sim}\ar@{^{(}->}[u]\\}
$$}
\end{theorem}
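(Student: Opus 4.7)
The plan is a straightforward assembly: every horizontal equivalence and every vertical inclusion in the diagram has already been established in the preceding results, so the proof will amount to citing them in the correct arrangement and noting that all the equivalences are implemented by the same pair of functors $C\otimes_R-$ and $\Hom_S(C,-)$.

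First I would account for the five horizontal equivalences. Rows one and five come directly from Proposition \ref{3.1}, which supplies the equivalences $\mathcal{WF}(R)\leftrightarrow\mathcal{WF}_C(S)$ and $\mathcal{WI}_C(R)\leftrightarrow\mathcal{WI}(S)$. Rows two and four come from Proposition \ref{3.2}. The middle row is the classical Foxby equivalence $\mathcal{A}_C(R)\leftrightarrow\mathcal{B}_C(S)$ recorded in Subsection~1.4 as \cite[Proposition 4.1]{HW07}. Since in each row the equivalence is implemented by the same adjoint pair $(C\otimes_R-,\Hom_S(C,-))$, the horizontal arrows of the diagram are automatically compatible with one another.

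Next I would verify the eight vertical inclusions. On the $R$-module side, the inclusion $\mathcal{WF}(R)\hookrightarrow\mathcal{WF}(R)_{\leq n}$ is tautological since weak flat modules have weak flat dimension zero, and similarly $\mathcal{WI}_C(R)\hookrightarrow\mathcal{WI}_C(R)_{\leq n}$ is immediate from the definitions. The inclusion $\mathcal{WF}(R)_{\leq n}\hookrightarrow\mathcal{A}_C(R)$ is precisely Corollary \ref{2.3}, while $\mathcal{WI}_C(R)_{\leq n}\hookrightarrow\mathcal{A}_C(R)$ is Proposition \ref{3.3}(2). Symmetrically, on the $S$-module side, the tautological inclusions $\mathcal{WF}_C(S)\hookrightarrow\mathcal{WF}_C(S)_{\leq n}$ and $\mathcal{WI}(S)\hookrightarrow\mathcal{WI}(S)_{\leq n}$ are immediate, $\mathcal{WF}_C(S)_{\leq n}\hookrightarrow\mathcal{B}_C(S)$ is Proposition \ref{3.3}(1), and $\mathcal{WI}(S)_{\leq n}\hookrightarrow\mathcal{B}_C(S)$ is Corollary \ref{2.3}.

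There is no genuine obstacle here: the substantive work has already been carried out, both in Section 2 (where $\mathcal{WF}_C(S)$ and $\mathcal{WI}_C(R)$ were shown via Proposition \ref{2.4} to be preserved by the two functors and to lie in the Bass and Auslander classes respectively) and in Propositions \ref{3.1}--\ref{3.3}. Consequently the proof reduces to recording that each of the five horizontal pairs is an equivalence implemented by $(C\otimes_R-,\Hom_S(C,-))$ and that the vertical hooks fit together as claimed. If any delicate point is to be flagged, it is simply to remind the reader that the inclusions into $\mathcal{A}_C(R)$ and $\mathcal{B}_C(S)$ are precisely what guarantees that the restricted functors in the outer rows are well-defined and agree with the ones provided by Holm--White on the inner row.
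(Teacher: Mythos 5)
Your proposal is correct and follows essentially the same route as the paper, which proves Theorem \ref{3.4} by citing Propositions \ref{3.1}, \ref{3.2} and \ref{3.3} (with Corollary \ref{2.3} and \cite[Proposition 4.1]{HW07} implicitly supplying the remaining inclusions and the middle row). Your more explicit accounting of each horizontal equivalence and vertical inclusion is just a fuller write-up of the same argument.
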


\begin{proof} This follows directly from Propositions \ref{3.1}, \ref{3.2} and \ref{3.3}. \end{proof}

\begin{proposition}\label{3.5}  {The following equalities hold.

$\mathrm{(1)}$  $\wfd_R(M)=C$-$\wfd_S(C\otimes_RM)$ for any left $R$-module $M$.

$\mathrm{(2)}$  $\wid_S(M)=C$-$\wid_R(\Hom_S(C,M))$ for any left $S$-module $M$.

$\mathrm{(3)}$ $C$-$\wid_R(M)=\wid_S(C\otimes_RM)$ for any left $R$-module $M$.

$\mathrm{(4)}$ $C$-$\wfd_S(M)=\wfd_R(\Hom_S(C,M))$ for any left $S$-module $M$.}
\end{proposition}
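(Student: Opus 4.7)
The plan is to prove each of the four equalities by transferring (co)resolutions across the pair of functors $C\otimes_R-$ and $\Hom_S(C,-)$. The recurring ingredients will be Theorem \ref{2.2} (weak flat modules lie in $\mathcal{A}_C(R)$ and weak injective modules lie in $\mathcal{B}_C(S)$), Proposition \ref{3.3} (finite $C$-weak flat, resp.\ $C$-weak injective, dimension forces membership in $\mathcal{B}_C(S)$, resp.\ $\mathcal{A}_C(R)$), Lemma \ref{2.9} (which identifies $\mathcal{A}_C(R)$-modules with $\mathcal{B}_C(S)$-modules under the two functors), and the closure of $\mathcal{A}_C(R)$ and $\mathcal{B}_C(S)$ under kernels of epimorphisms and cokernels of monomorphisms from \cite[Theorems 6.2 and 6.3]{HW07}. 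In each case I will argue by proving the bounded implication ``$X\le n\Rightarrow Y\le n$'' in both directions, which simultaneously handles the case where one side is infinite.

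For (1), if $\wfd_R(M)\le n$, choose a weak flat resolution $0\to F_n\to\cdots\to F_0\to M\to 0$. Every $F_i$ lies in $\mathcal{A}_C(R)$, and an induction along the short exact pieces using closure of $\mathcal{A}_C(R)$ puts every syzygy, hence $M$ itself, in $\mathcal{A}_C(R)$. In particular $\Tor_i^R(C,-)$ vanishes on all these modules, so $C\otimes_R-$ is exact on the resolution and yields a $C$-weak flat resolution of $C\otimes_RM$ of length $\le n$. Conversely, a $C$-weak flat resolution $0\to C\otimes_RF_n\to\cdots\to C\otimes_RF_0\to C\otimes_RM\to 0$ has every term in $\mathcal{B}_C(S)$ (the $C\otimes_RF_i$ by Lemma \ref{2.9}, the module $C\otimes_RM$ by Proposition \ref{3.3}(1)), so closure places every syzygy there too. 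Then $\Hom_S(C,-)$ is exact on the resolution, and the isomorphisms $\Hom_S(C,C\otimes_RF_i)\cong F_i$ together with $\Hom_S(C,C\otimes_RM)\cong M$ (the latter holding since $M\in\mathcal{A}_C(R)$ by Lemma \ref{2.9}) recover a weak flat resolution of $M$ of length $\le n$. Equality (2) follows by the dual argument, with weak injective coresolutions replacing weak flat resolutions and the roles of $\mathcal{A}_C(R)$ and $\mathcal{B}_C(S)$ exchanged.

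For (3), a $C$-weak injective coresolution $0\to M\to \Hom_S(C,I^0)\to\cdots\to \Hom_S(C,I^n)\to 0$ lies entirely in $\mathcal{A}_C(R)$ by Proposition \ref{2.4}(2) and Proposition \ref{3.3}(2), so $C\otimes_R-$ preserves exactness and, via $C\otimes_R\Hom_S(C,I^j)\cong I^j$, delivers a weak injective coresolution of $C\otimes_RM$ of length $\le n$. The converse takes a weak injective coresolution of $C\otimes_RM$ of length $\le n$, infers by a backwards induction on its short exact pieces that every cosyzygy (and hence $C\otimes_RM$ itself) lies in $\mathcal{B}_C(S)$, then pulls the coresolution back through the exact functor $\Hom_S(C,-)$ to a $C$-weak injective coresolution of $M$ of the same length. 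Claim (4) is dual to (3). The only genuine obstacle in any of this is the bookkeeping required to confirm that each syzygy or cosyzygy appearing in the resolutions is in the correct Auslander or Bass class so that both the Tor/Ext vanishing and the natural evaluation isomorphisms are available; this is a routine inductive consequence of the stated closure properties of $\mathcal{A}_C(R)$ and $\mathcal{B}_C(S)$.
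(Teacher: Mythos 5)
Your proposal is correct and follows essentially the same route as the paper: both directions transfer the (co)resolution through $C\otimes_R-$ and $\Hom_S(C,-)$, using Theorem \ref{2.2}/Corollary \ref{2.3}, Proposition \ref{3.3}, Lemma \ref{2.9} and the two-out-of-three closure of $\mathcal{A}_C(R)$ and $\mathcal{B}_C(S)$ to guarantee the needed Tor/Ext vanishing and evaluation isomorphisms. The only cosmetic difference is that the paper invokes Corollary \ref{2.3} directly where you run the syzygy induction by hand.
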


\begin{proof} We only prove (1), and (2)--(4) can be proved similarly.

Let $\wfd_R(M)=t<\infty$. Then $M\in\mathcal{A}_C(R)$ by Corollary \ref{2.3}, and so $\Tor_i^R(C,M)=0$ for all $i\geq 1$.
On the other hand, there is a weak flat resolution of $M$:
$$0\ra F_t\ra\cdots\ra F_1\ra F_0\ra M\ra 0,$$ which gives rise to the exactness of
$$0\ra C\otimes_RF_t\ra\cdots\ra C\otimes_RF_1\ra C\otimes_RF_0\ra C\otimes_RM\ra 0,$$
where each $C\otimes_RF_i$ is $C$-weak flat. This implies that $C$-$\wfd_S(C\otimes_RM)\leq t=\wfd_R(M)$.

Conversely, assume that $C$-$\wfd_S(C\otimes_RM)=s<\infty$. Then, by Proposition \ref{3.3}(1), we have $C\otimes_RM\in \mathcal{B}_C(S)$.
It follows that $M\in\mathcal{A}_C(R)$ by Proposition \ref{2.4}(2), which implies that  $\mu_{_{M}}:M\ra \Hom_S(C,C\otimes_RM)$ is an isomorphism.
On the other hand, there exists an exact sequence
$$\mathbb{X}=\ \ 0\ra C\otimes_RF_s\ra C\otimes_RF_{s-1}\ra\cdots\ra C\otimes_RF_0\ra C\otimes_RM\ra 0$$
in $\Mod S$ with each $F_i$ a weak flat left $R$-module. Since $C\otimes_RF_i\in\mathcal{B}_C(S)$ for any $0\leq i\leq s$,
we have that $\Hom_S(C,\mathbb{X})$ is exact by \cite[Corollary 6.3]{HW07}. Consider the following commutative diagram with the lower row exact:
$${\small\xymatrix@R=20pt@C=10pt{0 \ar[r] & F_s \ar[d]^{\cong} \ar[r] & \cdots\ar[r] & F_0 \ar[d]^{\cong} \ar[r] & M\ar[d]^{\cong} \ar[r] & 0\\
 0 \ar[r] & \Hom_S(C,C\otimes_RF_s) \ar[r] & \cdots\ar[r] &\Hom_S(C,C\otimes_RF_0)  \ar[r]  &\Hom_S(C,C\otimes_RM)\ar[r] & 0.}}$$
Then the upper row is exact in $\Mod R$ with each $F_i$ weak flat, and hence $\wfd_R(M)\leq s$.
\end{proof}

We finish this section with some applications of Theorem \ref{3.4}, which is of independent interest. For convenience, we assume that $R$ is a commutative ring.

In view of Theorem \ref{3.4}, it is natural to ask whether there is a relationship between the subcategory $\mathcal{WI}_C(R)$ and the subcategory of $\mathcal{WF}_C(S)$.
The answer is positive, that is, a suitable functor $\Hom_R(-,E)$ is discovered for any injective $R$-module $E$. Here we have

\begin{proposition}\label{3.6}  {
Let $R$ be a commutative ring and $E$ an injective $R$-module. Then

$\mathrm{(1)}$ $W\in \mathcal{WI}_C(R)$ implies $\Hom_R(W,E)\in
\mathcal{WF}_C(R)$.

$\mathrm{(2)}$ $W\in \mathcal{WF}_C(R)$ implies $\Hom_R(W,E)\in
\mathcal{WI}_C(R)$.}
\end{proposition}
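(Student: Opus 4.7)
The strategy is to unpack the defining descriptions of $\mathcal{WI}_C(R)$ and $\mathcal{WF}_C(R)$, and then pull the functor $\Hom_R(-,E)$ across $\Hom_R(C,-)$ and $C\otimes_R -$ by means of two natural comparison morphisms. Writing $W=\Hom_R(C,I)$ with $I\in\mathcal{WI}(R)$ in part~(1), I would invoke the Hom-evaluation morphism
\[
C\otimes_R\Hom_R(I,E)\lra\Hom_R(\Hom_R(C,I),E).
\]
Since $C$ admits a degreewise finite projective resolution (see 1.3), it is in particular finitely presented, so picking a presentation $R^m\to R^n\to C\to 0$ and comparing the two resulting cokernel descriptions (using that $\Hom_R(-,E)$ is exact because $E$ is injective) shows that this map is an isomorphism. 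For part~(2), writing $W=C\otimes_RF$ with $F\in\mathcal{WF}(R)$, the standard Hom-tensor adjunction gives
\[
\Hom_R(C\otimes_RF,E)\cong\Hom_R(C,\Hom_R(F,E)).
\]

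It then suffices to verify the ``inner'' statements: $\Hom_R(I,E)\in\mathcal{WF}(R)$ when $I\in\mathcal{WI}(R)$, and $\Hom_R(F,E)\in\mathcal{WI}(R)$ when $F\in\mathcal{WF}(R)$. I would prove both by the same device. Pick a super finitely presented left $R$-module $G$ together with a degreewise finite projective resolution $P_\bullet\to G$. For part~(2), the adjunction $\Hom_R(P_i\otimes_RF,E)\cong\Hom_R(P_i,\Hom_R(F,E))$ together with the exactness of $\Hom_R(-,E)$ gives
\[
\Ext_R^1(G,\Hom_R(F,E))\cong\Hom_R(\Tor_1^R(G,F),E),
\]
which vanishes by weak flatness of $F$. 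For part~(1), the Hom-evaluation isomorphism in the finitely generated projective case, $P_i\otimes_R\Hom_R(I,E)\cong\Hom_R(\Hom_R(P_i,I),E)$, similarly yields
\[
\Tor_1^R(G,\Hom_R(I,E))\cong\Hom_R(\Ext_R^1(G,I),E),
\]
which vanishes by weak injectivity of~$I$.

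Assembling the two ingredients produces $\Hom_R(W,E)\cong C\otimes_R\Hom_R(I,E)\in\mathcal{WF}_C(R)$ in~(1) and $\Hom_R(W,E)\cong\Hom_R(C,\Hom_R(F,E))\in\mathcal{WI}_C(R)$ in~(2). I expect the only substantive obstacle to be the Hom-evaluation isomorphism needed in part~(1); once its validity for finitely presented $C$ and injective $E$ is nailed down, the remaining steps are routine manipulations with the finite projective resolutions supplied by the super finite presentation hypotheses and the exactness of $\Hom_R(-,E)$.
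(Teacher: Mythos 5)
Your proposal is correct and follows essentially the same route as the paper: write $W=\Hom_R(C,I)$ or $W=C\otimes_RF$, show $\Hom_R(I,E)$ is weak flat via $\Tor^R_1(N,\Hom_R(I,E))\cong\Hom_R(\Ext^1_R(N,I),E)$ and $\Hom_R(F,E)$ is weak injective via $\Ext^1_R(N,\Hom_R(F,E))\cong\Hom_R(\Tor^R_1(N,F),E)$, then move $C$ across by Hom-evaluation and adjunction. The only difference is that you verify these auxiliary isomorphisms directly from the degreewise finite projective resolutions and the exactness of $\Hom_R(-,E)$, whereas the paper simply cites them (\cite[Lemma 1.2]{HW07}, \cite[Lemma 2.16]{GT12}); this is a presentational, not a substantive, difference.
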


\begin{proof}  (1) Let $W\in \mathcal{WI}_C(R)$. Then there exists a weak injective $R$-module $I$ such that $W=\operatorname{Hom}_R(C,I)$. Let $E$ be an injective $R$-module. One can easily check that $\operatorname{Hom}_R(I,E)$ is weak flat by the isomorphism (\cite[Lemma 1.2]{HW07}): $$\mbox{Tor}^R_1(N,\mbox{Hom}_R(I,E))\cong\mbox{Hom}_R(\mbox{Ext}^1_R(N,I),E)$$ for any super finitely presented $R$-module $N$. Thus we have
$$ \operatorname{Hom}_R(W,E)=\operatorname{Hom}_R(\operatorname{Hom}_R(C,I),E)\cong C\otimes_R\operatorname{Hom}_R(I,E).$$
It follows that $\operatorname{Hom}_R(W,E)$ is $C$-weak flat by definition.

(2) Let $W\in \mathcal{WF}_C(R)$. Then $W=C\otimes_RF$ for some weak flat $R$-module $F$. For any injective $R$-module $E$, one easily gets that $\operatorname{Hom}_R(F,E)$ is weak injective by the isomorphism (\cite[Lemma 2.16]{GT12}): $$\mbox{Ext}^1_R(N,\operatorname{Hom}_R(F,E))\cong\mbox{Hom}_R(\mbox{Tor}^R_1(N,F),E).$$ Thus we have
$$\operatorname{Hom}_R(W,E)=\operatorname{Hom}_R(C\otimes_RF,E)\cong \operatorname{Hom}_R(C,\operatorname{Hom}_R(F,E)).$$
Therefore $\operatorname{Hom}_R(W,E)$ is $C$-weak injective, as desired.\end{proof}

For convenience, we will denote by ${_R(-,-)}:=\Hom_R(-,-)$ in the following commutative diagrams. By Theorem \ref{3.4} and Proposition \ref{3.6}, we have the following two commutative diagrams for any injective $R$-module $E$:
$${\small
\xymatrix@C=1.5cm{
  \mathcal{WI}_C(R) \ar[r]^{{_R(-,E)}}\ar[d]_{C\otimes_R-} & \mathcal{WF}_C(R)\ar[d]^{{_R(C,-)}}\\
   \mathcal{WI}(R) \ar[r]^{{_R(-,E)}} & \mathcal{WF}(R), }
   \xymatrix@C=1.5cm{
  \mathcal{WF}_C(R) \ar[r]^{{_R(-,E)}}\ar[d]_{{_R(C,-)}} & \mathcal{WI}_C(R)\ar[d]^{C\otimes_R-}\\
   \mathcal{WF}(R) \ar[r]^{{_R(-,E)}} & \mathcal{WI}(R) }}$$
which give rise to the commutative diagram for all injective $R$-modules $E$ and $E'$:
$${\small
\xymatrix@C=1.5cm{
  \mathcal{WI}_C(R) \ar[r]^{{_R(-,E)}}\ar[d]_{C\otimes_R-} & \mathcal{WF}_C(R)\ar[r]^{{_R(-,E')}}&\mathcal{WI}_C(R)\\
   \mathcal{WI}(R) \ar[r]^{{_R(-,E)}}\ar[d]_{{_R(C,-)}} & \mathcal{WF}(R) \ar[r]^{{_R(-,E')}}&\mathcal{WI}(R)\ar[u]_{{_R(C,-)}}\\
   \mathcal{WI}_C(R) \ar[r]^{{_R(-,E)}}& \mathcal{WF}_C(R)\ar[r]^{{_R(-,E')}}&\mathcal{WI}_C(R).\ar[u]_{C\otimes_R-}
   }}$$

\begin{proposition}\label{3.7} {
Let $R$ be a commutative ring and $F$ a flat $R$-module. Then

$\mathrm{(1)}$ $W\in \mathcal{WI}_C(R)$ implies $W\otimes_RF\in
\mathcal{WI}_C(R)$.

$\mathrm{(2)}$ $W\in \mathcal{WF}_C(R)$ implies $W\otimes_RF\in
\mathcal{WF}_C(R)$. } \end{proposition}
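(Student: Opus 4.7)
Both parts share the same structure: write $W$ in its canonical form, pull the flat factor $F$ past the outer $\Hom_R(C,-)$ or $C\otimes_R-$, and then verify a closure property of the underlying weak injective or weak flat class. My plan is to establish those closure properties first and then perform the rewrite.

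For (1), I will write $W=\Hom_R(C,I)$ with $I$ weak injective and begin by proving that $I\otimes_R F$ is again weak injective. Given a super finitely presented $R$-module $N$, pick a degreewise finite projective resolution $P_\bullet\to N$. Each $P_i$ is finitely presented, so the canonical map $\Hom_R(P_i,I)\otimes_R F\to\Hom_R(P_i,I\otimes_R F)$ is an isomorphism, and because $-\otimes_R F$ is exact it commutes with cohomology. This yields
\[
\Ext^1_R(N,I\otimes_R F)\;\cong\;\Ext^1_R(N,I)\otimes_R F\;=\;0,
\]
so $I\otimes_R F\in\mathcal{WI}(R)$. Since $C$ is finitely presented and $F$ is flat, the tensor evaluation morphism (the same one used in the proof of Theorem \ref{2.2}) provides an isomorphism $\Hom_R(C,I)\otimes_R F\cong\Hom_R(C,I\otimes_R F)$, so $W\otimes_R F$ has the required form and lies in $\mathcal{WI}_C(R)$.

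For (2), write $W=C\otimes_R G$ with $G$ weak flat. Associativity of tensor gives $W\otimes_R F\cong C\otimes_R(G\otimes_R F)$, so it remains only to check that $G\otimes_R F$ is weak flat. For a super finitely presented $N$ with finitely generated projective resolution $P_\bullet$ as above, flatness of $F$ allows $-\otimes_R F$ to pass through the homology of $P_\bullet\otimes_R G$, producing
\[
\Tor^R_1(N,G\otimes_R F)\;\cong\;\Tor^R_1(N,G)\otimes_R F\;=\;0.
\]
Hence $G\otimes_R F\in\mathcal{WF}(R)$ and $W\otimes_R F\in\mathcal{WF}_C(R)$.

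The only non-formal ingredients are the tensor evaluation isomorphism for the finitely presented $C$ and the flat base-change isomorphisms for $\Ext^1$ and $\Tor_1$ against super finitely presented modules; both are standard, and I anticipate no substantive obstacle. The one bookkeeping point worth flagging is that the super finitely presented hypothesis on $N$ is used precisely to guarantee a resolution by finitely presented (in fact finitely generated projective) modules, which is what licenses the flat base change in the $\Ext$ computation of (1).
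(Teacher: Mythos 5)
Your argument is correct and follows the paper's proof essentially verbatim: decompose $W$ as $\Hom_R(C,I)$ or $C\otimes_RG$, move $F$ inside via tensor evaluation (resp. associativity), and conclude from the base-change isomorphisms $\Ext^1_R(N,I\otimes_RF)\cong\Ext^1_R(N,I)\otimes_RF$ and $\Tor^R_1(N,G\otimes_RF)\cong\Tor^R_1(N,G)\otimes_RF$. The only difference is that you re-derive these isomorphisms from a degreewise finite projective resolution and flatness of $F$, where the paper simply cites \cite[Lemma 1.1]{HW07} and \cite[Theorem 9.48]{Ro79}.
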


\begin{proof}  (1) Let $W\in \mathcal{WI}_C(R)$. Then there exists a weak injective $R$-module $I$ such that $W=\operatorname{Hom}_R(C,I)$. For any super finitely presented $R$-module $N$, we have that $I\otimes_RF$ is weak injective by the isomorphism (\cite[Lemma 1.1]{HW07}): $$\mbox{Ext}_R^1(N,I\otimes_RF)\cong\mbox{Ext}^1_R(N,I)\otimes_RF.$$
On the other hand, by the tensor evaluation morphism (\cite[1.10]{HW07}), one easily gets that $$W\otimes_RF=\operatorname{Hom}_R(C,I)\otimes_RF\cong \operatorname{Hom}_R(C,I\otimes_RF)$$since $C$ is a semidualizing module.
Hence $W\otimes_RF$ is $C$-weak injective by definition.

(2) Let $W\in \mathcal{WF}_C(R)$. Then we have $W=C\otimes_RQ$ for some weak flat $R$-module $Q$. For any super finitely presented $R$-module $N$, one gets that $Q\otimes_RF$ is weak flat by the isomorphism (\cite[Theorem 9.48]{Ro79}): $$\Tor^R_1(N,Q\otimes_RF)\cong\Tor_1^R(N,Q)\otimes_RF.$$
On the other hand, we have that $$W\otimes_RF=(C\otimes_RQ)\otimes_RF\cong C\otimes_R(Q\otimes_RF).$$
Therefore $W\otimes_RF$ is $C$-weak flat, as desired.  \end{proof}

By Theorem \ref{3.4} and Proposition \ref{3.7}, we have the following commutative diagrams for any flat $R$-module $F$:
$${\small
\xymatrix@C=1.5cm{
  \mathcal{WI}_C(R) \ar[r]^{-\otimes_RF}\ar[d]_{C\otimes_R-} & \mathcal{WI}_C(R)\ar[d]^{C\otimes_R-}\\
   \mathcal{WI}(R) \ar[r]^{-\otimes_RF} & \mathcal{WI}(R), }
   \xymatrix@C=1.5cm{
  \mathcal{WF}_C(R) \ar[r]^{-\otimes_RF}\ar[d]_{{_R(C,-)}} & \mathcal{WF}_C(R)\ar[d]^{{_R(C,-)}}\\
   \mathcal{WF}(R) \ar[r]^{-\otimes_RF} & \mathcal{WF}(R). }}$$

Moreover, for any flat $R$-module $F$ and any injective $R$-module $E$, we obtain the following commutative diagrams:
$${\scriptsize
\xymatrix@C=15pt{
  \mathcal{WI}_C(R) \ar[r]^{-\otimes_RF}\ar[d]_{C\otimes_R-} & \mathcal{WI}_C(R)\ar[r]^{{_R(-,E)}}&\mathcal{WF}_C(R)\\
   \mathcal{WI}(R) \ar[r]^{-\otimes_RF}\ar[d]_{{_R(C,-)}} & \mathcal{WI}(R) \ar[r]^{{_R(-,E)}}&\mathcal{WF}(R)\ar[u]_{C\otimes_R-}\\
   \mathcal{WI}_C(R) \ar[r]^{-\otimes_RF}& \mathcal{WI}_C(R)\ar[r]^{{_R(-,E)}}&\mathcal{WF}_C(R),\ar[u]_{{_R(C,-)}}
   }
\!\!\!\xymatrix@C=15pt{
  \mathcal{WF}_C(R) \ar[r]^{{_R(-,E)}}\ar[d]_{{_R(C,-)}} & \mathcal{WI}_C(R)\ar[r]^{-\otimes_RF}&\mathcal{WI}_C(R)\\
   \mathcal{WF}(R) \ar[r]^{{_R(-,E)}}\ar[d]_{C\otimes_R-} & \mathcal{WI}(R) \ar[r]^{-\otimes_RF}&\mathcal{WI}(R)\ar[u]_{{_R(C,-)}}\\
   \mathcal{WF}_C(R) \ar[r]^{{_R(-,E)}}& \mathcal{WI}_C(R)\ar[r]^{-\otimes_RF}&\mathcal{WI}_C(R),\ar[u]_{C\otimes_R-}
   }}$$
   $${\scriptsize
\xymatrix@C=15pt{
  \mathcal{WF}_C(R) \ar[r]^{-\otimes_RF}\ar[d]_{{_R(C,-)}} & \mathcal{WF}_C(R)\ar[r]^{{_R(-,E)}}&\mathcal{WI}_C(R)\\
   \mathcal{WF}(R) \ar[r]^{-\otimes_RF}\ar[d]_{C\otimes_R-} & \mathcal{WF}(R) \ar[r]^{{_R(-,E)}}&\mathcal{WI}(R)\ar[u]_{{_R(C,-)}}\\
   \mathcal{WF}_C(R) \ar[r]^{-\otimes_RF}& \mathcal{WF}_C(R)\ar[r]^{{_R(-,E)}}&\mathcal{WI}_C(R),\ar[u]_{C\otimes_R-}
   }
\!\!\!\xymatrix@C=15pt{
  \mathcal{WI}_C(R) \ar[r]^{{_R(-,E)}}\ar[d]_{C\otimes_R-} & \mathcal{WF}_C(R)\ar[r]^{-\otimes_RF}&\mathcal{WF}_C(R)\\
   \mathcal{WI}(R) \ar[r]^{{_R(-,E)}}\ar[d]_{{_R(C,-)}} & \mathcal{WF}(R) \ar[r]^{-\otimes_RF}&\mathcal{WF}(R)\ar[u]_{C\otimes_R-}\\
   \mathcal{WI}_C(R) \ar[r]^{{_R(-,E)}}& \mathcal{WF}_C(R)\ar[r]^{-\otimes_RF}&\mathcal{WF}_C(R).\ar[u]_{{_R(C,-)}}
   }}$$

\begin{proposition}\label{3.8} {
Let $R$ be a commutative ring and $P$ a projective $R$-module. Then

$\mathrm{(1)}$ $W\in \mathcal{WF}_C(R)$ implies $\Hom_R(P,W)\in
\mathcal{WF}_C(R)$.

$\mathrm{(2)}$ $W\in \mathcal{WI}_C(R)$ implies $\Hom_R(P,W)\in
\mathcal{WI}_C(R)$.}
\end{proposition}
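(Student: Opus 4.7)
The plan is to exploit the closure of the relevant classes under direct products and direct summands, together with the observation that any projective $P$ fits into a decomposition $P\oplus P'\cong R^{(\Lambda)}$ for some set $\Lambda$, so that $\Hom_R(P,-)$ is realized as a direct summand of the $\Lambda$-fold product functor $(-)^{\Lambda}\cong\Hom_R(R^{(\Lambda)},-)$.

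For part (1), I would write $W=C\otimes_RF$ with $F$ a weak flat $R$-module, and first treat the free case $P=R^{(\Lambda)}$. Here one has $\Hom_R(R^{(\Lambda)},W)\cong W^{\Lambda}=(C\otimes_RF)^{\Lambda}\cong C\otimes_RF^{\Lambda}$, where the last isomorphism uses that $C$ is finitely presented and hence $C\otimes_R -$ commutes with arbitrary direct products (apply $-\otimes_R\prod_{\Lambda}F$ to a finite presentation $R^m\ra R^n\ra C\ra 0$ and compare cokernels, noting that cokernels commute with products in $\Mod R$). Since products of weak flat modules are weak flat by \cite[Theorem 2.13]{GW15}, $F^{\Lambda}$ is weak flat, so $\Hom_R(R^{(\Lambda)},W)\in\mathcal{WF}_C(R)$. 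For a general projective $P$, the module $\Hom_R(P,W)$ is a direct summand of $\Hom_R(R^{(\Lambda)},W)$, and closure of $\mathcal{WF}_C(R)$ under direct summands (Proposition \ref{2.8}) concludes the argument.

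For part (2), write $W=\Hom_R(C,I)$ with $I$ a weak injective $R$-module. The Hom-Hom adjunction (available because $R$ is commutative) gives the isomorphism $\Hom_R(P,W)\cong\Hom_R(C\otimes_RP,I)\cong\Hom_R(C,\Hom_R(P,I))$, so it suffices to show that $\Hom_R(P,I)$ is weak injective. When $P=R^{(\Lambda)}$ this is $I^{\Lambda}$, which is weak injective because $\Ext^1_R(N,-)$ commutes with arbitrary direct products in its second argument for any super finitely presented $N$; for general projective $P$ one passes to a direct summand of $I^{\Lambda}$ and appeals to closure of the class of weak injective modules under direct summands.

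The principal technical point is the commutativity of $C\otimes_R -$ with direct products in part (1), which rests on $C$ being finitely presented. Part (2), by contrast, is essentially formal once the Hom-Hom adjunction is applied: the argument never needs to leave the class of weak injective modules, since $\Hom_R(P,I)$ inherits weak injectivity directly from $I$ via the summand-of-product realization.
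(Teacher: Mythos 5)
Your argument is correct, but it takes a genuinely different route from the paper's. The paper argues ``testwise'': it fixes a super finitely presented module $N$, chooses $0\to N'\to P_0\to N\to 0$ with $P_0$ finitely generated projective, and uses swap/evaluation isomorphisms (quoting Sather-Wagstaff's notes) to identify $\Tor_1^R(\Hom_R(P,F),N)\cong\Hom_R(P,\Tor_1^R(N,F))$ and $\Ext_R^1(N,\Hom_R(P,I))\cong\Hom_R(P,\Ext_R^1(N,I))$ for an arbitrary projective $P$; this shows directly that $\Hom_R(P,F)$ is weak flat and $\Hom_R(P,I)$ is weak injective, after which the evaluation isomorphism $\Hom_R(P,C\otimes_RF)\cong C\otimes_R\Hom_R(P,F)$ (resp. the swap $\Hom_R(P,\Hom_R(C,I))\cong\Hom_R(C,\Hom_R(P,I))$) finishes. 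You instead exploit $P\oplus P'\cong R^{(\Lambda)}$ to realize $\Hom_R(P,-)$ as a direct summand of the product functor $(-)^{\Lambda}$ and then use closure properties: your isomorphism $(C\otimes_RF)^{\Lambda}\cong C\otimes_RF^{\Lambda}$ (valid since $C$ is finitely presented) together with product-closure of weak flats handles the free case, and summand-closure of $\mathcal{WF}_C(R)$ (Proposition \ref{2.8}) handles general $P$; in (2) the adjunction reduces everything to $\Hom_R(P,I)$ being a summand of $I^{\Lambda}$, which is weak injective since $\Ext_R^1(N,-)$ commutes with products and respects summands. In fact part (1) can be shortened further: $W^{\Lambda}\in\mathcal{WF}_C(R)$ already follows from the product-closure statement of Proposition \ref{2.8}, so your explicit computation reproves a special case of it. What your route buys is economy --- no appeal to the external evaluation/swap lemma and no diagram chase with a presentation of $N$; what the paper's route buys is the slightly finer intermediate fact that $\Hom_R(P,F)$ is weak flat and $\Hom_R(P,I)$ is weak injective for every projective $P$, proved without passing through products. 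Both arguments are sound.
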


\begin{proof}
(1) Let $W\in \mathcal{WF}_C(R)$. Then there exists a weak flat $R$-module $F$ such that $W=C\otimes_RF$. For any super finitely presented $R$-module $N$, we have an exact sequence $$0\rightarrow N'\rightarrow P_0\rightarrow N\rightarrow 0$$ with $P_0$ finitely generated projective and $N'$ super finitely presented. Now consider the following commutative diagram
$$\xymatrix@C=15pt{0\ar[r] & \operatorname{Hom}_R(P,\operatorname{Tor}_1^R(N,F))\ar[r]\ar[d] & \operatorname{Hom}_R(P,N'\otimes_RF) \ar[r]\ar[d]^\cong & \operatorname{Hom}_R(P,P_0\otimes_RF)\ar[d]^\cong\\
0\ar[r] & \operatorname{Tor}^R_1(\operatorname{Hom}_R(P,F),N)\ar[r] & \operatorname{Hom}_R(P,F)\otimes_RN' \ar[r] & \operatorname{Hom}_R(P,F)\otimes_RP_0.
}$$
Since $N'$ and $P_0$ are finitely generated and $P$ is projective, one gets that the right two morphisms are isomorphic by \cite[Appendix A, Lemma 1.4]{SSW}. Hence we have
 $$\mbox{Tor}^R_1(\mbox{Hom}_R(P,F),N)\cong \mbox{Hom}_R(P,\operatorname{Tor}^R_1(N,F))=0$$ since $F$ is weak flat.  It follows that
 $\operatorname{Hom}_R(P,F)$ is weak flat. On the other hand, we have
$$\operatorname{Hom}_R(P,W)=\operatorname{Hom}_R(P,C\otimes_RF)\cong \operatorname{Hom}_R(P,F)\otimes_RC\cong C\otimes_R\operatorname{Hom}_R(P,F).$$
Thus $\operatorname{Hom}_R(P,W)$ is $C$-weak flat.

(2)  Let $W\in \mathcal{WI}_C(R)$. Then $W=\operatorname{Hom}_R(C,I)$ for some weak injective $R$-module $I$. For any super finitely presented $R$-module $N$, we have an exact sequence $$0\rightarrow N'\rightarrow P_0\rightarrow N\rightarrow 0$$ with $P_0$ finitely generated projective. Consider the following commutative diagram
$${\small\xymatrix@C=10pt{ \operatorname{Hom}_R(P,\operatorname{Hom}_R(P_0,I))\ar[r]\ar[d]^\cong & \operatorname{Hom}_R(P,\operatorname{Hom}_R(N',I)) \ar[r]\ar[d]^\cong & \operatorname{Hom}_R(P,\operatorname{Ext}^1_R(N,I))\ar[d]\ar[r] &0\\
 \operatorname{Hom}_R(P_0,\operatorname{Hom}_R(P,I))\ar[r] & \operatorname{Hom}_R(N',\operatorname{Hom}_R(P,I)) \ar[r] & \operatorname{Ext}^1_R(N,\operatorname{Hom}_R(P,I))\ar[r]&0.
}}$$
Then the left two morphisms in the above diagram are isomorphic viewed as the swap maps. It follows that $\operatorname{Ext}^1_R(N,\operatorname{Hom}_R(P,I))\cong\operatorname{Hom}_R(P,\operatorname{Ext}^1_R(N,I))$. Note that $I$ is weak injective, we get that $\operatorname{Hom}_R(P,I)$ is weak injective.
\end{proof}

By Theorem \ref{3.4} and Proposition \ref{3.8},  we have the following commutative diagrams for any projective $R$-module $P$:
$${\small
\xymatrix@C=1.5cm{
  \mathcal{WF}_C(R) \ar[r]^{{_R(P,-)}}\ar[d]_{{_R(C,-)}} & \mathcal{WF}_C(R)\ar[d]^{{_R(C,-)}}\\
   \mathcal{WF}(R) \ar[r]^{{_R(P,-)}} & \mathcal{WF}(R), }
   \xymatrix@C=1.5cm{
  \mathcal{WI}_C(R) \ar[r]^{{_R(P,-)}}\ar[d]_{C\otimes_R-} & \mathcal{WI}_C(R)\ar[d]^{C\otimes_R-}\\
   \mathcal{WI}(R) \ar[r]^{{_R(P,-)}} & \mathcal{WI}(R). }}$$
Consequently, we obtain the following commutative diagrams for any injective $R$-module $E$, any flat $R$-module $F$ and any projective $R$-module $P$:
$${\scriptsize
  \xymatrix@C=15pt{
  \mathcal{WF}_C(R) \ar[r]^{{_R(P,-)}}\ar[d]_{{_R(C,-)}} & \mathcal{WF}_C(R)\ar[r]^{{_R(-,E)}}&\mathcal{WI}_C(R)\\
   \mathcal{WF}(R) \ar[r]^{{_R(P,-)}}\ar[d]_{C\otimes_R-} & \mathcal{WF}(R) \ar[r]^{{_R(-,E)}}&\mathcal{WI}(R)\ar[u]_{{_R(C,-)}}\\
   \mathcal{WF}_C(R) \ar[r]^{{_R(P,-)}}& \mathcal{WF}_C(R)\ar[r]^{{_R(-,E)}}&\mathcal{WI}_C(R),\ar[u]_{C\otimes_R-}
   }
  \!\!\! \xymatrix@C=15pt{
  \mathcal{WI}_C(R) \ar[r]^{{_R(-,E)}}\ar[d]_{C\otimes_R-} & \mathcal{WF}_C(R)\ar[r]^{{_R(P,-)}}&\mathcal{WF}_C(R)\\
   \mathcal{WI}(R) \ar[r]^{{_R(-,E)}}\ar[d]_{{_R(C,-)}} & \mathcal{WF}(R) \ar[r]^{{_R(P,-)}}&\mathcal{WF}(R)\ar[u]_{C\otimes_R-}\\
   \mathcal{WI}_C(R) \ar[r]^{{_R(-,E)}}& \mathcal{WF}_C(R)\ar[r]^{{_R(P,-)}}&\mathcal{WF}_C(R),\ar[u]_{{_R(C,-)}}
   }}$$
 $${\scriptsize
  \xymatrix@C=15pt{
  \mathcal{WF}_C(R) \ar[r]^{{_R(P,-)}}\ar[d]_{{_R(C,-)}} & \mathcal{WF}_C(R)\ar[r]^{-\otimes_RF}&\mathcal{WF}_C(R)\\
   \mathcal{WF}(R) \ar[r]^{{_R(P,-)}}\ar[d]_{C\otimes_R-} & \mathcal{WF}(R) \ar[r]^{-\otimes_RF}&\mathcal{WF}(R)\ar[u]_{C\otimes_R-}\\
   \mathcal{WF}_C(R) \ar[r]^{{_R(P,-)}}& \mathcal{WF}_C(R)\ar[r]^{-\otimes_RF}&\mathcal{WF}_C(R),\ar[u]_{{_R(C,-)}}
   }
  \!\!\! \xymatrix@C=15pt{
  \mathcal{WF}_C(R) \ar[r]^{-\otimes_RF}\ar[d]_{{_R(C,-)}} & \mathcal{WF}_C(R)\ar[r]^{{_R(P,-)}}&\mathcal{WF}_C(R)\\
   \mathcal{WF}(R) \ar[r]^{-\otimes_RF}\ar[d]_{C\otimes_R-} & \mathcal{WF}(R) \ar[r]^{{_R(P,-)}}&\mathcal{WF}(R)\ar[u]_{C\otimes_R-}\\
   \mathcal{WF}_C(R) \ar[r]^{-\otimes_RF}& \mathcal{WF}_C(R)\ar[r]^{{_R(P,-)}}&\mathcal{WF}_C(R),\ar[u]_{{_R(C,-)}}
   }}$$
$${\scriptsize
  \xymatrix@C=15pt{
  \mathcal{WI}_C(R) \ar[r]^{{_R(P,-)}}\ar[d]_{C\otimes_R-} & \mathcal{WI}_C(R)\ar[r]^{{_R(-,E)}}&\mathcal{WF}_C(R)\\
   \mathcal{WI}(R) \ar[r]^{{_R(P,-)}}\ar[d]_{{_R(C,-)}} & \mathcal{WI}(R) \ar[r]^{{_R(-,E)}}&\mathcal{WF}(R)\ar[u]_{C\otimes_R-}\\
   \mathcal{WI}_C(R) \ar[r]^{{_R(P,-)}}& \mathcal{WI}_C(R)\ar[r]^{{_R(-,E)}}&\mathcal{WF}_C(R),\ar[u]_{{_R(C,-)}}
   }
  \!\!\! \xymatrix@C=15pt{
  \mathcal{WF}_C(R) \ar[r]^{{_R(-,E)}}\ar[d]_{{_R(C,-)}} & \mathcal{WI}_C(R)\ar[r]^{{_R(P,-)}}&\mathcal{WI}_C(R)\\
   \mathcal{WF}(R) \ar[r]^{{_R(-,E)}}\ar[d]_{C\otimes_R-} & \mathcal{WI}(R) \ar[r]^{{_R(P,-)}}&\mathcal{WI}(R)\ar[u]_{{_R(C,-)}}\\
   \mathcal{WF}_C(R) \ar[r]^{{_R(-,E)}}& \mathcal{WI}_C(R)\ar[r]^{{_R(P,-)}}&\mathcal{WI}_C(R),\ar[u]_{C\otimes_R-}
   }}$$
 $${\scriptsize
  \xymatrix@C=15pt{
  \mathcal{WI}_C(R) \ar[r]^{{_R(P,-)}}\ar[d]_{C\otimes_R-} & \mathcal{WI}_C(R)\ar[r]^{-\otimes_RF}&\mathcal{WI}_C(R)\\
   \mathcal{WI}(R) \ar[r]^{{_R(P,-)}}\ar[d]_{{_R(C,-)}} & \mathcal{WI}(R) \ar[r]^{-\otimes_RF}&\mathcal{WI}(R)\ar[u]_{{_R(C,-)}}\\
   \mathcal{WI}_C(R) \ar[r]^{{_R(P,-)}}& \mathcal{WI}_C(R)\ar[r]^{-\otimes_RF}&\mathcal{WI}_C(R),\ar[u]_{C\otimes_R-}
   }
  \!\!\! \xymatrix@C=15pt{
  \mathcal{WI}_C(R) \ar[r]^{-\otimes_RF}\ar[d]_{C\otimes_R-} & \mathcal{WI}_C(R)\ar[r]^{{_R(P,-)}}&\mathcal{WI}_C(R)\\
   \mathcal{WI}(R) \ar[r]^{-\otimes_RF}\ar[d]_{{_R(C,-)}} & \mathcal{WI}(R) \ar[r]^{{_R(P,-)}}&\mathcal{WI}(R)\ar[u]_{{_R(C,-)}}\\
   \mathcal{WI}_C(R) \ar[r]^{-\otimes_RF}& \mathcal{WI}_C(R)\ar[r]^{{_R(P,-)}}&\mathcal{WI}_C(R).\ar[u]_{C\otimes_R-}
   }}$$

\section{Stability of the Auslander and Bass classes }

In this section, we show that an iteration of the procedure used to describe the Auslander class (resp. Bass class) yields exactly the
Auslander class (resp. Bass class), which generalize \cite[Theorems 2 and 6.1]{HW07} and \cite[Propositions 3.6 and 3.7]{EH09}. This enables us to provide more beautiful characterizations of the modules in the Auslander
and Bass classes. Finally, special attention is
paid to giving a partial answer to Question 2.\vspace{0.2cm}

\begin{lemma}\label{4.1} (\cite[Theorems 2 and 6.1]{HW07}) {A left $R$-module
$M\in \mathcal{A}_C(R)$ if and only if there exists an exact sequence
$$\mathbb{X}=\ \ \cdots\lra P_1\lra P_0\lra U^0\lra U^1\lra\cdots$$ in $\Mod R$ with each $P_i$ projective (or flat)
and $U^i$ $C$-injective such that $M\cong\coker(P_1\ra P_0)$ and the complex $C\otimes_R\mathbb{X}$ is exact.

 A left $S$-module $N\in \mathcal{B}_C(S)$ if and only if there exists an exact sequence
$$\mathbb{Y}=\ \ \cdots\lra W_1\lra W_0\lra I^0\lra I^1\lra\cdots$$ in $\Mod S$ with each $I^i$ injective
and $W_i$ $C$-projective such that $N\cong\ker(I^0\ra I^1)$ and the complex $\Hom_S(C,\mathbb{Y})$ is exact.}
\end{lemma}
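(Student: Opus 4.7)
The plan is to prove the characterization for $\mathcal{A}_C(R)$ by explicitly assembling the complete resolution from data that already lives inside $\mathcal{A}_C(R)$ and $\mathcal{B}_C(S)$; the statement for $\mathcal{B}_C(S)$ follows by a dual argument, swapping $\otimes_R$ with $\Hom_S$, projectives with injectives, and $C$-injective with $C$-projective modules throughout.

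For the ``only if'' direction, let $M\in\mathcal{A}_C(R)$. On the left I would take any ordinary projective resolution $\cdots\to P_1\to P_0\to M\to 0$; condition (A1) guarantees that $C\otimes_R-$ keeps it exact. On the right, Lemma \ref{2.9}(2) gives $C\otimes_R M\in\mathcal{B}_C(S)$, so I pick an injective coresolution $0\to C\otimes_R M\to I^0\to I^1\to\cdots$ in $\Mod S$. Every injective $S$-module lies in $\mathcal{B}_C(S)$ (\cite[Lemma 4.1]{HW07}), and $\mathcal{B}_C(S)$ is closed under cokernels of monomorphisms by \cite[Theorem 6.2]{HW07}, so every cosyzygy is again in $\mathcal{B}_C(S)$; hence $\Hom_S(C,-)$ keeps the coresolution exact, yielding $0\to\Hom_S(C,C\otimes_R M)\to\Hom_S(C,I^0)\to\Hom_S(C,I^1)\to\cdots$. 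Using $\mu_M$ from (A3) to identify the leftmost term with $M$, and splicing with the projective resolution via the composition $P_0\to M\to\Hom_S(C,I^0)$, produces the desired $\mathbb{X}$. Its $C\otimes_R-$ exactness follows from (A1) on the left and from the natural isomorphism $C\otimes_R\Hom_S(C,I^i)\cong I^i$ on the right.

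For the ``if'' direction, suppose $\mathbb{X}$ is given with $M\cong\coker(P_1\to P_0)$. Truncating, the left half is a projective (or flat) resolution of $M$, and $C\otimes_R-$ exactness of this portion — which follows from $C\otimes_R\mathbb{X}$ being exact together with right exactness of the tensor product — yields $\Tor_i^R(C,M)=0$ for $i\geq 1$, so (A1) holds. The right half $0\to M\to U^0\to U^1\to\cdots$ is exact, and applying $C\otimes_R-$ gives $0\to C\otimes_R M\to I^0\to I^1\to\cdots$ (using $C\otimes_R\Hom_S(C,I^i)\cong I^i$ since each injective $I^i$ lies in $\mathcal{B}_C(S)$). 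This is an injective coresolution of $C\otimes_R M$, so $\Ext_S^i(C,C\otimes_R M)$ is computed by applying $\Hom_S(C,-)$ to it; doing so reproduces the original right half of $\mathbb{X}$ from degree one onward, and its exactness therefore supplies (A2). A comparison of the degree-zero terms then forces $\mu_M$ to be an isomorphism, giving (A3).

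I expect the delicate step to be precisely this last comparison: verifying that the connecting map $M\to U^0=\Hom_S(C,I^0)$ coming from $\mathbb{X}$ agrees, under the isomorphism $\mu_{U^0}$ available because $U^0\in\mathcal{I}_C(R)\subseteq\mathcal{A}_C(R)$, with the composite $M\xrightarrow{\mu_M}\Hom_S(C,C\otimes_R M)\hookrightarrow\Hom_S(C,I^0)$. This is a naturality diagram chase rather than a conceptual obstacle, but it is the pivot that upgrades exactness of the reconstructed sequence into the isomorphism condition (A3), so it deserves the most care when writing the full proof.
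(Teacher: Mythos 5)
Your argument is correct, but note that the paper itself offers no proof of this lemma to compare against: it is imported verbatim from Holm--White (\cite[Theorems 2 and 6.1]{HW07}), and what you have written is essentially the standard adjunction-and-closure argument underlying that cited result. Your ``only if'' direction (splice a projective resolution, which is $C\otimes_R-$ exact by (A1), with $\Hom_S(C,-)$ applied to an injective coresolution of $C\otimes_RM\in\mathcal{B}_C(S)$, which stays exact because every cosyzygy remains in $\mathcal{B}_C(S)$) and your ``if'' direction (read off $\Tor^R_i(C,M)=0$ from the tensored left half, recognize $0\to C\otimes_RM\to I^0\to I^1\to\cdots$ as an injective coresolution, then recover (A2) and (A3)) are both sound. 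Two details deserve explicit writing in the final version. First, in the ``if'' direction the injectivity of $C\otimes_RM\to C\otimes_RU^0$ is not a formal consequence of exactness of the right half of $\mathbb{X}$ alone: it follows because $C\otimes_RP_0\to C\otimes_RU^0$ factors through the surjection $C\otimes_RP_0\to C\otimes_RM$ (right exactness of the tensor product) and has kernel equal to the image of $C\otimes_RP_1$ by exactness of $C\otimes_R\mathbb{X}$ at $C\otimes_RP_0$. Second, the identification of the differentials of $\Hom_S(C,I^\bullet)$ with those of $U^0\to U^1\to\cdots$, as well as the degree-zero comparison you correctly single out for (A3), rests on naturality of $\mu$ and $\nu$ together with the triangle identity $\Hom_S(C,\nu_{I})\circ\mu_{\Hom_S(C,I)}=\mathrm{id}$, using that each injective $I^i$ lies in $\mathcal{B}_C(S)$ and each $U^i=\Hom_S(C,I^i)$ lies in $\mathcal{A}_C(R)$. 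With these spelled out your proof goes through for both the projective and the flat variant, and the dual argument handles the $\mathcal{B}_C(S)$ statement.
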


\begin{remark}\label{4.2}  Since ${}_SC_R$ is a faithfully
semidualizing bimodule, it is straightforward to get that all
kernels and cokernels of $\mathbb{X}$ in Lemma \ref{4.1} are in
$\mathcal{A}_C(R)$ by \cite[Lemmas 4.1, 5.1 and Corollary
6.3]{HW07}. Similarly, one can easily obtain that all kernels and
cokernels of $\mathbb{Y}$ in Lemma \ref{4.1} are in $\mathcal{B}_C(S)$.
\end{remark}

In the following, $\mathscr{A}$ denotes an abelian category, all subcategories are full subcategories of $\mathscr{A}$ closed under isomorphisms.
We fix subcategories $\mathscr{X}$ and $\mathscr{Y}$ of $\mathscr{A}$. Recall from \cite{SSW08} that a subcategory $\mathscr{X}$ of $\mathscr{Y}$ is called a \textit{generator} (resp. \textit{cogenerator}) for $\mathscr{Y}$ if for any object $Y$ in $\mathscr{Y}$, there exists an exact sequence $0\ra Y'\ra X\ra Y\ra 0$ (resp. $0\ra Y\ra X\ra Y'\ra 0$) in $\mathscr{Y}$ with $X$ an object in $\mathscr{X}$.
We use gen $\mathscr{Y}$ (resp. cogen $\mathscr{Y}$) to denote a generator (resp. cogenerator) for a subcategory $\mathscr{Y}$.\vspace{0.1cm}

The following two results play a crucial role in the section, which give a generalization of \cite[Theorem 5.3]{Hu13}.

\begin{proposition}\label{4.3} {Let $\mathscr{X}$ be closed under extensions and $$\cdots\ra G_n\ra\cdots \ra G_1\ra G_0\ra M\ra 0\eqno{(4.1)}$$ be an exact sequence in $\mathscr{A}$ with all $G_i$ objects in $\mathscr{X}$.
Then we have the following

$\mathrm{(1)}$ There exists an exact sequence
$$\cdots\ra P_n\ra \cdots\ra P_1\ra P_0\ra M\ra 0 \eqno{(4.2)}$$ in $\mathscr{A}$ with all $P_i$ objects in gen $\mathscr{X}$.

$\mathrm{(2)}$ Let $\mathscr{A}=\Mod R$, and $D$ be an object in $\mathscr{A}$ such that any short exact sequence in $\mathscr{X}$ is $D\otimes_R-$ exact.
If {\rm (4.1)} is $D\otimes_R-$ exact,
then so is {\rm (4.2)}.

$\mathrm{(3)}$ Let $D$ be an object in $\mathscr{A}$ such that any short exact sequence in $\mathscr{X}$ is $\Hom_\mathscr{A}(D,-)$ exact {\rm (}resp. $\Hom_\mathscr{A}(-,D)$ exact{\rm )}.
If {\rm (4.1)} is  $\Hom_\mathscr{A}(D,-)$ exact {\rm (}resp. $\Hom_\mathscr{A}(-,D)$ exact{\rm )},
then so is {\rm (4.2)}. }
\end{proposition}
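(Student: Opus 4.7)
For part (1), the plan is to construct the resolution (4.2) inductively. Set $L_{-1}=M$ and $L_n=\ker(G_n\ra G_{n-1})$ for $n\geq 0$, and apply the generator property to pick $P_0\in\gen\mathscr{X}$ sitting in a short exact sequence $0\ra G_0'\ra P_0\ra G_0\ra 0$ with $G_0'\in\mathscr{X}$. The composition $P_0\twoheadrightarrow G_0\twoheadrightarrow M$ has kernel $K_0$ fitting into $0\ra G_0'\ra K_0\ra L_0\ra 0$ by the snake lemma. For the inductive step, form the pullback $T_n=K_n\times_{L_n}G_{n+1}$ along the surjection $G_{n+1}\twoheadrightarrow L_n$; the pullback square furnishes two short exact sequences $0\ra L_{n+1}\ra T_n\ra K_n\ra 0$ and $0\ra G_n'\ra T_n\ra G_{n+1}\ra 0$, whence $T_n\in\mathscr{X}$ by closure under extensions. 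Applying the generator property to $T_n$ yields $0\ra G_{n+1}'\ra P_{n+1}\ra T_n\ra 0$ with $P_{n+1}\in\gen\mathscr{X}$, and setting $P_{n+1}\ra K_n$ to be $P_{n+1}\twoheadrightarrow T_n\twoheadrightarrow K_n$ identifies its kernel $K_{n+1}$ as fitting into $0\ra G_{n+1}'\ra K_{n+1}\ra L_{n+1}\ra 0$, carrying the induction forward.

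For parts (2) and (3), write $F$ for the relevant (covariant or contravariant) functor and observe that $F$-exactness of (4.2) is equivalent to $F$-exactness of every short exact sequence $0\ra K_n\ra P_n\ra K_{n-1}\ra 0$. The plan is to prove these by induction on $n$, carried simultaneously with the auxiliary claim that $0\ra G_n'\ra K_n\ra L_n\ra 0$ is $F$-exact. The $F$-exact inputs are: (i) the SES's $0\ra G_n'\ra T_n\ra G_{n+1}\ra 0$ and $0\ra G_{n+1}'\ra P_{n+1}\ra T_n\ra 0$ lie in $\mathscr{X}$, so are $F$-exact by hypothesis; (ii) the splitting $0\ra L_{n+1}\ra G_{n+1}\ra L_n\ra 0$ of (4.1) is $F$-exact; (iii) the inductive hypothesis. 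Arranging (i), (ii) and (iii) as a first $3\times 3$ diagram whose outer columns are $0\ra G_n'=G_n'\ra 0$ and the SES of (ii), the classical $9$-lemma yields $F$-exactness of the middle column $0\ra L_{n+1}\ra T_n\ra K_n\ra 0$. A second $3\times 3$ diagram---rows being the SES for $K_{n+1}$, the SES for $P_{n+1}$ over $T_n$, and the trivial SES $0\ra 0\ra K_n=K_n\ra 0$, with right column the just-obtained SES---together with the right- or left-exactness of $F$ on the unknown top row, then forces via a short four-lemma style chase that both $0\ra G_{n+1}'\ra K_{n+1}\ra L_{n+1}\ra 0$ and $0\ra K_{n+1}\ra P_{n+1}\ra K_n\ra 0$ are $F$-exact. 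The base case $n=0$ runs identically with $M$ in place of $K_{-1}$, and the contravariant case in (3) is handled by reading the same two diagrams with all arrows reversed.

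The main obstacle I expect is the second $3\times 3$ diagram, in which only two rows and two columns are a priori $F$-exact, so that the classical $9$-lemma does not directly conclude; the remaining exactness must be extracted by a diagram chase that crucially uses either the right-exactness of $D\otimes_R-$ or the left-exactness of $\Hom_\mathscr{A}(D,-)$ and $\Hom_\mathscr{A}(-,D)$. Packaging the induction so that it carries the stronger statement---namely that $0\ra G_n'\ra K_n\ra L_n\ra 0$ is $F$-exact at every stage, and not merely $F$-exactness of (4.2) up to stage $n$---is the key bookkeeping point, because this auxiliary SES is precisely the piece of data fed as the bottom row of the first $3\times 3$ diagram at the next step.
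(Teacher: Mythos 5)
Your construction in part (1) is exactly the paper's: pull back the original syzygy sequence along the generator epimorphism (your $T_n$ is the paper's pulled-back module $G_1'$ at each stage), and your treatment of (2)--(3) tracks $F$-exactness through the same two pullback diagrams that the paper uses, so the proposal is correct and essentially the same argument, just with the paper's ``repeat the process'' packaged as an explicit induction carrying the auxiliary sequence $0\ra G_n'\ra K_n\ra L_n\ra 0$. The diagram chases you anticipate (nine-lemma for the middle column, then a chase using right-exactness of $D\otimes_R-$ or left-exactness of the Hom functors, together with the standard fact that $F$-exactness of (4.1) passes to its syzygy short exact sequences) all go through as planned.
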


\begin{proof} (1)\ Let $$\cdots\ra G_n\ra\cdots\ra G_1\ra G_0\ra M\ra 0$$ be an exact sequence in $\mathscr{A}$ with all $G_i\in\mathscr{X}$.
Put $K_1=\im(G_1\ra G_0)$. Then we get the following two exact sequences $$\cdots\ra G_n\ra\cdots\ra G_1\ra K_1\ra 0\ \ \mbox{ and }\ \  0\ra K_1\ra G_0\ra M\ra 0.$$
On the other hand, there exists a short exact sequence $$0\ra G \ra P_0\ra G_0\ra 0$$
 in $\mathscr{A}$ with $P_0$ an object in gen $\mathscr{X}$ and $G\in \mathscr{X}$. Consider the following pull-back diagram:
$$\xymatrix@R=16pt@C=16pt{& 0 \ar[d] & 0 \ar[d] & &\\
&  G \ar@{=}[r] \ar[d]& G \ar[d] & & \\
0 \ar[r] & N \ar[r] \ar[d] & P_0 \ar[r] \ar[d] & M \ar[r] \ar@{=}[d] & 0\\
0 \ar[r] & K_1 \ar[r] \ar[d] & G_0 \ar[r] \ar[d] & M \ar[r] & 0\\
& 0 & ~0. & &}$$
Let $K_2=\im(G_2\ra G_1)$. Then we have the following exact sequences
$$\cdots\ra G_n\ra\cdots\ra G_2\ra K_2\ra 0\ \ \mbox{ and }\ \  0\ra K_2\ra G_1\ra K_1\ra 0.$$
 Now consider the following pull-back diagram:
$$\xymatrix@R=16pt@C=16pt{& & 0 \ar[d] & 0 \ar[d]& &\\
& & G \ar@{=}[r] \ar[d] & G \ar[d]& &\\
0 \ar[r] & K_2 \ar@{=}[d] \ar[r] & G'_1 \ar[d] \ar[r] &N \ar[d] \ar[r] & 0\\
0 \ar[r] & K_2 \ar[r] & G_1 \ar[r] \ar[d] & K_1 \ar[d] \ar[r] & 0 &\\
& & 0 & ~0. & & }$$
In the sequence $0\ra G\ra G'_1\ra G_1\ra 0$, both $G$ and $G_1$ belong to $\mathscr{X}$, then so is $G'_1$.
Thus we obtain the following exact sequences $$0\ra N\ra P_0\ra M\ra 0$$ and $$\cdots\ra G_n\ra\cdots\ra G_2\ra G'_1\ra N\ra 0,\eqno{(4.3)}$$ where $P_0$ is an object in gen $\mathscr{X}$,
and $G'_1$ and $G_i\ (i=2,3,\cdots)$ belong to $\mathscr{X}$. By repeating the above step to (4.3) and so on, one gets the desired exact sequence (4.2).

(2) Let $D$ be an object in $\mathscr{A}$ such that any short exact sequence in $\mathscr{X}$ is $D\otimes_R-$ exact.
Then the middle columns in the above two diagrams are $D\otimes_R-$ exact. If $(4.1)$ is $D\otimes_R-$ exact, then both third rows in the above two diagrams are $D\otimes_R-$ exact. Thus both the middle rows in these two diagrams are also $D\otimes_R-$ exact. Hence the sequence
(4.3) is $D\otimes_R-$ exact. Continuing this process, one can easily deduce that (4.2) is $D\otimes_R-$ exact.

(3) The proof is similar to that of (2).
\end{proof}

Dually, we have the following

\begin{proposition}\label{4.4} {Let $\mathscr{X}$ be closed under extensions and let $n\geq 1$ and $$ 0\ra M\ra G^0\ra G^1\ra \cdots \ra G^n\ra\cdots\eqno{(4.4)}$$ be an exact sequence in $\mathscr{A}$ with all $G^i$ objects in $\mathscr{X}$.
Then we have the following

$\mathrm{(1)}$ There exists an exact sequence
$$ 0\ra M\ra I^0\ra I^1\ra \cdots \ra I^n\ra\cdots\eqno{(4.5)}$$ in $\mathscr{A}$ with all $I^i$ objects in cogen $\mathscr{X}$.

$\mathrm{(2)}$ Let $\mathscr{A}=\Mod R$, and  $D$ be an object in $\mathscr{A}$ such that any short exact sequence in $\mathscr{X}$ is $D\otimes_R-$ exact.
If {\rm (4.4)} is $D\otimes_R-$ exact, then so is {\rm (4.5)}.

$\mathrm{(3)}$ Let $D$ be an object in $\mathscr{A}$ such that any short exact sequence in $\mathscr{X}$ is $\Hom_\mathscr{A}(D,-)$ exact {\rm (}resp. $\Hom_\mathscr{A}(-,D)$ exact{\rm )}.
If {\rm (4.4)} is  $\Hom_\mathscr{A}(D,-)$ exact {\rm (}resp. $\Hom_\mathscr{A}(-,D)$ exact{\rm )},
then so is {\rm (4.5)}. }
\end{proposition}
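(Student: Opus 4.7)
The plan is to mirror the argument of Proposition \ref{4.3} by replacing kernels with cokernels and pullbacks with pushouts. By definition, any object in $\mathscr{X}$ fits into a short exact sequence $0 \to X \to I \to X' \to 0$ with $I \in \cogen \mathscr{X}$ and $X' \in \mathscr{X}$, which is exactly the dual of the generator-side input used in the proof of Proposition \ref{4.3}, so the argument transports across essentially verbatim.

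Concretely, I would set $Z^i = \ker(G^i \to G^{i+1})$ so that $Z^0 = M$ and the sequence (4.4) decomposes into short exact sequences $0 \to Z^i \to G^i \to Z^{i+1} \to 0$. Using the cogenerating property, choose $0 \to G^0 \to I^0 \to G \to 0$ with $I^0 \in \cogen \mathscr{X}$ and $G \in \mathscr{X}$, and push out $G^0 \to I^0$ along $G^0 \to Z^1$. This produces a $3 \times 3$ diagram whose middle row is $0 \to M \to I^0 \to N \to 0$ and whose right column is $0 \to Z^1 \to N \to G \to 0$. Next, push out $N \leftarrow Z^1 \to G^1$: the resulting diagram has bottom row $0 \to N \to H \to Z^2 \to 0$ and right column $0 \to G^1 \to H \to G \to 0$, whence $H \in \mathscr{X}$ by closure under extensions. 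Splicing with the tail $0 \to Z^2 \to G^2 \to G^3 \to \cdots$ of (4.4) yields a new coresolution $0 \to N \to H \to G^2 \to G^3 \to \cdots$ in which every term after $N$ lies in $\mathscr{X}$. Repeating the construction with $N$ in place of $M$ produces $I^1 \in \cogen \mathscr{X}$ and a fresh remainder, and iterating assembles (4.5).

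For parts (2) and (3), the vertical short exact sequences used in the pushouts all have outer terms in $\mathscr{X}$, so the standing hypothesis makes them $F$-acyclic for $F = D\otimes_R-$, $\Hom_\mathscr{A}(D,-)$ or $\Hom_\mathscr{A}(-,D)$. The top row of each $3 \times 3$ diagram is a piece of (4.4), hence $F$-acyclic by assumption; the $3 \times 3$/horseshoe argument then transfers acyclicity to the middle row. Iterating through every stage, and noting that the splicing with the $F$-acyclic tail of (4.4) is mediated by short exact sequences in $\mathscr{X}$ (whence also $F$-acyclic), one concludes that (4.5) is $F$-acyclic.

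The main obstacle is bookkeeping rather than mathematics: one must verify inductively that the newly produced cokernel $N$ continues to sit atop a coresolution whose later terms lie in $\mathscr{X}$, and that $F$-acyclicity is preserved through both pushouts and the splice with (4.4) at each stage. Once this administrative setup is fixed, the two pushout diagrams at each stage together with the extension-closure of $\mathscr{X}$ propagate all required properties uniformly, and the proof becomes a line-by-line dualization of Proposition \ref{4.3}.
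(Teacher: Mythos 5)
Your proposal is correct and is essentially the paper's own argument: the paper proves Proposition \ref{4.4} only by declaring it dual to Proposition \ref{4.3}, and your pushout construction (using $0\to G^0\to I^0\to G\to 0$ from the cogenerator, forming $N$ and then $H\in\mathscr{X}$ by extension-closure, splicing with the tail, and iterating) is exactly that dualization. Your treatment of (2) and (3) via $F$-acyclicity of the short exact sequences in $\mathscr{X}$ and of the pieces of (4.4) likewise mirrors the paper's argument for Proposition \ref{4.3}(2)--(3).
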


By Propositions \ref{4.3} and \ref{4.4}, we get immediately the following two results.

\begin{corollary}\label{4.5}  {$\mathrm{(1)}$  A left $R$-module $M$ has a projective resolution which is $C\otimes_R-$ exact if and only if
there exists a $C\otimes_R-$ exact exact sequence $\cdots\ra G_1\ra G_0\ra M\ra 0$ with each $G_i\in\mathcal{A}_C(R)$.

 $\mathrm{(2)}$  A left $S$-module $N$ has a $C$-projective resolution which is $\Hom_R(C,-)$ exact if and only if
there exists a $\Hom_R(C,-)$ exact exact sequence $\cdots\ra V_1\ra V_0\ra N\ra 0$ with each $V_i\in\mathcal{B}_C(S)$. }
\end{corollary}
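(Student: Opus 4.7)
The plan is to deduce both parts from Proposition \ref{4.3}, with Proposition \ref{4.4} not being directly needed since both statements concern left resolutions. In each part the ``only if'' direction is free: every projective left $R$-module lies in $\mathcal{A}_C(R)$ by \cite[Lemma 4.1]{HW07}, and every $C$-projective left $S$-module lies in $\mathcal{B}_C(S)$ via the Foxby equivalence, so the given projective (resp.\ $C$-projective) resolution already has the form and exactness required on the right-hand side of the biconditional.

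For the nontrivial direction of (1), I will apply Proposition \ref{4.3} with $\mathscr{A}=\Mod R$, $\mathscr{X}=\mathcal{A}_C(R)$, and $D=C$. The hypotheses to check are three: $\mathcal{A}_C(R)$ is closed under extensions (by \cite[Theorem 6.2]{HW07}); every short exact sequence inside $\mathcal{A}_C(R)$ is $C\otimes_R-$ exact, which is immediate from axiom (A1) giving $\Tor^R_1(C,G)=0$ for $G\in\mathcal{A}_C(R)$; and the class of projective left $R$-modules serves as a generator of $\mathcal{A}_C(R)$. The last point is a two-line argument: given $G\in\mathcal{A}_C(R)$, choose any projective surjection $P\twoheadrightarrow G$; then $P\in\mathcal{A}_C(R)$ by \cite[Lemma 4.1]{HW07} and the kernel is in $\mathcal{A}_C(R)$ by closure under kernels of epimorphisms \cite[Theorem 6.2]{HW07}. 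Plugging the given sequence into Proposition \ref{4.3}(1) produces an exact projective resolution of $M$, and part (2) of the same proposition with $D=C$ transfers the $C\otimes_R-$ exactness.

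Part (2) is proved by the symmetric argument with $\mathscr{A}=\Mod S$, $\mathscr{X}=\mathcal{B}_C(S)$, and $D=C$, this time invoking Proposition \ref{4.3}(3) to handle the $\Hom_S(C,-)$ version. The easy ingredients dualize: $\mathcal{B}_C(S)$ is closed under extensions and under kernels of epimorphisms by \cite[Theorem 6.2]{HW07}, and every short exact sequence in $\mathcal{B}_C(S)$ is $\Hom_S(C,-)$ exact since (B1) forces $\Ext^1_S(C,V)=0$ for $V\in\mathcal{B}_C(S)$. The step I expect to be the main obstacle is verifying that the class of $C$-projective $S$-modules is a generator of $\mathcal{B}_C(S)$, because this requires crossing the Foxby equivalence: given $V\in\mathcal{B}_C(S)$, I would pick a projective surjection $P\twoheadrightarrow\Hom_S(C,V)$ in $\Mod R$, apply the right-exact functor $C\otimes_R-$, and identify $C\otimes_R\Hom_S(C,V)$ with $V$ via the counit $\nu_V$ of condition (B3). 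This yields an epimorphism $C\otimes_R P\twoheadrightarrow V$ from a $C$-projective module, whose kernel again lies in $\mathcal{B}_C(S)$ by the closure property above. With this generator established, Proposition \ref{4.3}(1) produces an exact $C$-projective resolution of $N$ and part (3) preserves the $\Hom_S(C,-)$ exactness.
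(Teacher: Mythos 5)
Your proposal is correct and follows exactly the route the paper intends: the paper deduces Corollary \ref{4.5} directly from Proposition \ref{4.3} (with $\mathscr{X}=\mathcal{A}_C(R)$ resp.\ $\mathcal{B}_C(S)$, $D=C$, and the projectives resp.\ $C$-projectives as generator), and your verifications of the generator property, closure under extensions, and the $C\otimes_R-$ resp.\ $\Hom_S(C,-)$ exactness of short exact sequences in these classes are precisely the details the paper leaves implicit. The only remark worth adding is that the ``$\Hom_R(C,-)$'' in the statement of part (2) should read $\Hom_S(C,-)$, as you tacitly assumed.
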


\begin{corollary}\label{4.6}  {$\mathrm{(1)}$ A left $R$-module $M$ has a $C$-injective coresolution which is $C\otimes_R-$ exact if and only if
there exists a $C\otimes_R-$ exact exact sequence $0\ra M\ra G^0\ra G^1\ra \cdots$ with each $G^i\in\mathcal{A}_C(R)$.

 $\mathrm{(2)}$ A left $S$-module $N$ has an injective coresolution which is $\Hom_R(C,-)$ exact if and only if
there exists a $\Hom_R(C,-)$ exact exact sequence $0\ra N\ra V^0\ra V^1\ra \cdots$ with each $V^i\in\mathcal{B}_C(S)$. }
\end{corollary}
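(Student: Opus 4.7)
The plan is to reduce both equivalences to Proposition~\ref{4.4}. In each case the ``only if'' direction is immediate from Remark~\ref{4.2}, which places $\mathcal{I}_C(R)$ inside $\mathcal{A}_C(R)$ and every injective left $S$-module inside $\mathcal{B}_C(S)$; so a $C\otimes_R-$ exact $C$-injective coresolution (resp.\ a $\Hom_S(C,-)$ exact injective coresolution) already has the form demanded on the right-hand side.

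For the converse in part~(1), I would apply Proposition~\ref{4.4} with $\mathscr{A}=\Mod R$, $\mathscr{X}=\mathcal{A}_C(R)$, and $D=C$ viewed as a right $R$-module. Three hypotheses must be checked. First, $\mathcal{A}_C(R)$ is closed under extensions, which is standard. Second, every short exact sequence in $\mathcal{A}_C(R)$ is $C\otimes_R-$ exact, because $\Tor_1^R(C,-)$ vanishes on $\mathcal{A}_C(R)$. Third, $\mathcal{I}_C(R)$ is a cogenerator for $\mathcal{A}_C(R)$: given $M\in\mathcal{A}_C(R)$, Lemma~\ref{4.1} furnishes an exact sequence $\cdots\to P_1\to P_0\to U^0\to U^1\to\cdots$ with $M=\coker(P_1\to P_0)$ and each $U^i\in\mathcal{I}_C(R)$, whence $M\cong\ker(U^0\to U^1)$ embeds into $U^0$ with cokernel $\ker(U^1\to U^2)$, which Remark~\ref{4.2} places back in $\mathcal{A}_C(R)$. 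With these ingredients, Proposition~\ref{4.4} parts~(1) and~(2) together convert the given $C\otimes_R-$ exact sequence $0\to M\to G^0\to G^1\to\cdots$ with $G^i\in\mathcal{A}_C(R)$ into a $C\otimes_R-$ exact coresolution by objects of $\mathcal{I}_C(R)$, which is the required $C$-injective coresolution.

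Part~(2) is completely dual. I would invoke Proposition~\ref{4.4} parts~(1) and~(3) with $\mathscr{X}=\mathcal{B}_C(S)$ and $D=C$, using $\Hom_S(C,-)$ exactness. The three verifications become: $\mathcal{B}_C(S)$ is closed under extensions; short exact sequences in $\mathcal{B}_C(S)$ are $\Hom_S(C,-)$ exact (from the vanishing of $\Ext_S^1(C,-)$ on $\mathcal{B}_C(S)$); and the class of injective left $S$-modules cogenerates $\mathcal{B}_C(S)$. The cogenerator fact comes from the second half of Lemma~\ref{4.1}: any $N\in\mathcal{B}_C(S)$ embeds in an injective $I^0$ with cokernel $\ker(I^1\to I^2)$, which lies in $\mathcal{B}_C(S)$ by Remark~\ref{4.2}.

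The only step that requires any real argument is the cogenerator property in each case; once that is extracted from Lemma~\ref{4.1} and Remark~\ref{4.2}, the corollary is a direct consequence of Proposition~\ref{4.4}, so I expect this cogenerator verification to be the only obstacle of substance.
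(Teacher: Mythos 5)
Your argument is exactly the paper's (the paper derives Corollary \ref{4.6} directly from Proposition \ref{4.4}), and you supply correctly the only detail the paper leaves implicit, namely that $\mathcal{I}_C(R)$ cogenerates $\mathcal{A}_C(R)$ and the injectives cogenerate $\mathcal{B}_C(S)$, via Lemma \ref{4.1} and Remark \ref{4.2}. Your reading of the functor in part (2) as $\Hom_S(C,-)$ is the intended one (the statement's ``$\Hom_R(C,-)$'' is a slip), so the proposal is correct and essentially identical in route to the paper.
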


Set $[\mathcal{A}_C(R)]^{1}=\mathcal{A}_C(R)$, and inductively set $[\mathcal{A}_C(R)]^{n+1}=\{M\in \Mod R\mid$ there exists a $C\otimes_R-$ exact
exact sequence $\cdots\ra G_1\ra G_0\ra G^0\ra G^1\ra\cdots$ in $\Mod R$ with all $G_i$ and $G^i$ in
$[\mathcal{A}_C(R)]^n$ such that $M\cong\coker(G_1\ra G_0)\}$ for any $n\geq 1$.

\begin{theorem}\label{4.7}
{$[\mathcal{A}_C(R)]^n=\mathcal{A}_C(R)$ for any $n\geq 1$. }
\end{theorem}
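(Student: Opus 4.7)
The plan is to proceed by induction on $n$, with the base case $n=1$ holding trivially by definition. For the inductive step, assuming $[\mathcal{A}_C(R)]^n=\mathcal{A}_C(R)$, I aim to show both containments $[\mathcal{A}_C(R)]^{n+1}\supseteq\mathcal{A}_C(R)$ and $[\mathcal{A}_C(R)]^{n+1}\subseteq\mathcal{A}_C(R)$.

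For the inclusion $\mathcal{A}_C(R)\subseteq[\mathcal{A}_C(R)]^{n+1}$, take $M\in\mathcal{A}_C(R)$ and apply Lemma \ref{4.1} to obtain a $C\otimes_R-$ exact exact sequence $\cdots\ra P_1\ra P_0\ra U^0\ra U^1\ra\cdots$ with $P_i$ projective, $U^i$ $C$-injective, and $M\cong\coker(P_1\ra P_0)$. Since flat (hence projective) modules and $C$-injective modules all lie in $\mathcal{A}_C(R)$, the inductive hypothesis identifies them as members of $[\mathcal{A}_C(R)]^n$, so $M\in[\mathcal{A}_C(R)]^{n+1}$ by definition.

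For the reverse inclusion, take $M\in[\mathcal{A}_C(R)]^{n+1}$. The defining data, combined with the inductive hypothesis, yields a $C\otimes_R-$ exact exact sequence
$$\cdots\ra G_1\ra G_0\ra G^0\ra G^1\ra\cdots$$
with every $G_i,G^i\in\mathcal{A}_C(R)$ and $M\cong\coker(G_1\ra G_0)$. Splitting this at $M$ produces two exact sequences
$$\cdots\ra G_1\ra G_0\ra M\ra 0 \quad\text{and}\quad 0\ra M\ra G^0\ra G^1\ra\cdots,$$
each of which inherits $C\otimes_R-$ exactness from the spliced complex (the image/kernel at the splicing point is $M$, and the snake/degree count confirms all homology groups after tensoring with $C$ remain zero). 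Now Corollary \ref{4.5}(1) upgrades the first sequence to a $C\otimes_R-$ exact \emph{projective} resolution of $M$, while Corollary \ref{4.6}(1) upgrades the second to a $C\otimes_R-$ exact \emph{$C$-injective} coresolution of $M$. Splicing these produces a complex of the shape prescribed by Lemma \ref{4.1}, whence $M\in\mathcal{A}_C(R)$.

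The main obstacle is the verification that splitting the spliced sequence at $M$ preserves $C\otimes_R-$ exactness on both halves; this is what allows the two Corollaries to be applied and what drives the entire argument. Once this point is confirmed, everything reduces to appeals to Lemma \ref{4.1} and Corollaries \ref{4.5}--\ref{4.6}, which is precisely why Propositions \ref{4.3} and \ref{4.4} were set up earlier in the section. The proof for the Bass class version (Theorem \ref{4.8}) will be entirely dual, using the second parts of Lemma \ref{4.1} and Corollaries \ref{4.5}--\ref{4.6}.
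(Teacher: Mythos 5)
Your proposal is correct and follows essentially the same route as the paper: induction on $n$, with the inclusion $\mathcal{A}_C(R)\subseteq[\mathcal{A}_C(R)]^{n+1}$ being immediate and the reverse inclusion obtained by splitting the defining sequence at $M$ and invoking Corollaries \ref{4.5} and \ref{4.6} together with Lemma \ref{4.1}. The only point you flag as an obstacle---that both truncations stay $C\otimes_R-$ exact, which follows from right exactness of $C\otimes_R-$ and exactness of the full tensored complex---is left implicit in the paper's proof, so your write-up is if anything slightly more careful.
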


\begin{proof} It is easy to see that $\mathcal{A}_C(R)\subseteq[\mathcal{A}_C(R)]^2\subseteq[\mathcal{A}_C(R)]^3\subseteq\cdots$ is an ascending chain of subcategories of $\Mod R$.

Let $A\in[\mathcal{A}_C(R)]^2$. Then there exists a $C\otimes_R-$ exact exact sequence $$\cdots\ra G_1\ra G_0\ra G^0\ra G^1\ra\cdots$$ in $\Mod R$ with all $G_i$ and $G^i$ in
$\mathcal{A}_C(R)$ such that $A\cong\coker(G_1\ra G_0)$. It follows from Corollary \ref{4.5} that $A$ has a projective resolution which is $C\otimes_R-$ exact. On the other hand, $A$ has a $C$-injective coresolution which is $C\otimes_R-$ exact by Corollary \ref{4.6}. Hence $A\in\mathcal{A}_C(R)$ by Lemma \ref{4.1}, and so $[\mathcal{A}_C(R)]^2\subseteq\mathcal{A}_C(R)$. Thus we have that $[\mathcal{A}_C(R)]^2=\mathcal{A}_C(R)$. By using induction on $n$ we can easily get the assertion. \end{proof}

Set $[\mathcal{B}_C(S)]^{1}=\mathcal{B}_C(S)$, and inductively set $[\mathcal{B}_C(S)]^{n+1}=\{M\in \Mod S\mid$ there exists a $\Hom_S(C,-)$ exact
exact sequence $\cdots\ra V_1\ra V_0\ra V^0\ra V^1\ra\cdots$ in $\Mod S$ with all $V_i$ and $V^i$ in
$[\mathcal{B}_C(S)]^n$ such that $M\cong\ker(V^0\ra V^1)\}$ for any $n\geq 1$. Dual to Theorem \ref{4.7}, one easily gets the following result.

\begin{theorem}\label{4.8}
{$[\mathcal{B}_C(S)]^n=\mathcal{B}_C(S)$ for any $n\geq 1$. }
\end{theorem}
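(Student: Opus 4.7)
The plan is to dualize the argument for Theorem \ref{4.7} throughout, exchanging projective resolutions with injective coresolutions, $C$-injective coresolutions with $C$-projective resolutions, and $C\otimes_R-$ exactness with $\Hom_S(C,-)$ exactness.

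First I would verify the easy inclusions $\mathcal{B}_C(S) \subseteq [\mathcal{B}_C(S)]^2 \subseteq [\mathcal{B}_C(S)]^3 \subseteq \cdots$, which follow directly from the second characterization in Lemma \ref{4.1}: any $N \in \mathcal{B}_C(S)$ sits as $\ker(I^0 \to I^1)$ inside a $\Hom_S(C,-)$-exact two-sided complex whose terms, being injective or $C$-projective, already lie in $\mathcal{B}_C(S)$ by Remark \ref{4.2}.

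For the nontrivial direction I would induct on $n$, starting with $n=2$. Let $N \in [\mathcal{B}_C(S)]^2$; by definition there is a $\Hom_S(C,-)$-exact exact sequence
$$\cdots \lra V_1 \lra V_0 \lra V^0 \lra V^1 \lra \cdots$$
in $\Mod S$ with every $V_i, V^i \in \mathcal{B}_C(S)$ and $N \cong \ker(V^0 \to V^1)$. Breaking the sequence at $N$ yields a $\Hom_S(C,-)$-exact resolution $\cdots \to V_1 \to V_0 \to N \to 0$ and a $\Hom_S(C,-)$-exact coresolution $0 \to N \to V^0 \to V^1 \to \cdots$, both with terms in $\mathcal{B}_C(S)$. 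Corollary \ref{4.5}(2) then promotes the former to a $C$-projective resolution of $N$ which remains $\Hom_S(C,-)$-exact, and Corollary \ref{4.6}(2) promotes the latter to an injective coresolution of $N$ which remains $\Hom_S(C,-)$-exact. Splicing along $N$ produces a two-sided exact complex $\cdots \to W_1 \to W_0 \to I^0 \to I^1 \to \cdots$ with each $W_i$ $C$-projective, each $I^i$ injective, $N \cong \ker(I^0 \to I^1)$, and the whole complex $\Hom_S(C,-)$-exact. The second clause of Lemma \ref{4.1} then delivers $N \in \mathcal{B}_C(S)$, so $[\mathcal{B}_C(S)]^2 \subseteq \mathcal{B}_C(S)$.

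The inductive step from $[\mathcal{B}_C(S)]^n = \mathcal{B}_C(S)$ to $[\mathcal{B}_C(S)]^{n+1} = \mathcal{B}_C(S)$ is formally identical, since the inductive hypothesis permits me to treat the layer-$n$ terms in the defining complex as genuine objects of $\mathcal{B}_C(S)$ and thereby re-enter the base case. The only subtlety I anticipate is keeping orientation straight: in the Bass-side picture $N$ appears as a kernel rather than a cokernel, so Corollary \ref{4.5}(2) must attach on the left of $N$ and Corollary \ref{4.6}(2) on the right. Beyond this bookkeeping, I do not expect a genuine obstacle, because the splicing machinery of Propositions \ref{4.3} and \ref{4.4} (as packaged in these two corollaries) was built to be exactly symmetric under the dualization we are applying here.
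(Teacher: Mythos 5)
Your proposal is correct and is exactly the dualization the paper intends: the paper proves Theorem \ref{4.7} via Corollaries \ref{4.5} and \ref{4.6} together with Lemma \ref{4.1}, and disposes of Theorem \ref{4.8} by declaring it dual, which is precisely the argument you carry out (using Corollary \ref{4.5}(2), Corollary \ref{4.6}(2), the second half of Lemma \ref{4.1}, and induction on $n$). So you have essentially reproduced the paper's proof, just written out in full.
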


As applications of Theorems \ref{4.7} and \ref{4.8}, we can immediately obtain the following two results, which give some equivalent characterizations of the modules in the Auslander and Bass classes in terms of weak flat, weak injective, $C$-weak injective and $C$-weak flat modules. For a non-negative integer $n$,  we use $\mathcal{P}(R)_{\leq n}$ (resp., $\mathcal{F}(R)_{\leq n}$, $\mathcal{I}(S)_{\leq n}$ and $\mathcal{FI}(S)_{\leq n}$) to denote the class of left $R$- (or left $S$-) modules of projective
(resp., flat, injective and FP-injective) dimension at most $n$.

\begin{corollary}\label{4.9}  {The following statements are equivalent for a module $M\in\Mod R$.

$\mathrm{(1)}$ $M\in \mathcal{A}_C(R)$.

$\mathrm{(2)}$  There exists a $C\otimes_R-$ exact
exact sequence $$\cdots\ra G_1\ra G_0\ra G_{-1}\ra \cdots$$ in $\Mod R$ with all $G_i\in
\mathcal{P}(R)_{\leq n}$ or $\mathcal{I}_C(R)_{\leq n}$ such that $M\cong\coker(G_1\ra G_0)$.

$\mathrm{(3)}$  There exists a $C\otimes_R-$ exact
exact sequence $$\cdots\ra G_1\ra G_0\ra G_{-1}\ra\cdots$$ in $\Mod R$ with all $G_i\in\mathcal{F}(R)_{\leq n}$ or $\mathcal{I}_C(R)_{\leq n}$ such that $M\cong\coker(G_1\ra G_0)$.

$\mathrm{(4)}$  There exists a $C\otimes_R-$ exact
exact sequence $$\cdots\ra G_1\ra G_0\ra G_{-1}\ra\cdots$$ in $\Mod R$ with all $G_i\in\mathcal{F}(R)_{\leq n}$ or $\mathcal{FI}_C(R)_{\leq n}$ such that $M\cong\coker(G_1\ra G_0)$.

$\mathrm{(5)}$  There exists a $C\otimes_R-$ exact
exact sequence $$\cdots\ra G_1\ra G_0\ra G_{-1}\ra\cdots$$ in $\Mod R$ with all $G_i\in\mathcal{WF}(R)_{\leq n}$ or $\mathcal{WI}_C(R)_{\leq n}$ such that $M\cong\coker(G_1\ra G_0)$.}
\end{corollary}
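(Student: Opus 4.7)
The plan is to establish the cycle $(1)\Rightarrow(2)\Rightarrow(3)\Rightarrow(4)\Rightarrow(5)\Rightarrow(1)$, where the middle three implications are free and the outer two are short reductions to results already in hand.

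First, I would dispatch $(2)\Rightarrow(3)\Rightarrow(4)\Rightarrow(5)$ by nothing more than containment of classes: for each fixed $n\geq 0$ we have
\[
\mathcal{P}(R)_{\leq n}\subseteq\mathcal{F}(R)_{\leq n}\subseteq\mathcal{WF}(R)_{\leq n},\qquad \mathcal{I}_C(R)_{\leq n}\subseteq\mathcal{FI}_C(R)_{\leq n}\subseteq\mathcal{WI}_C(R)_{\leq n},
\]
so any sequence witnessing a stronger condition automatically witnesses each of the weaker conditions, keeping both $C\otimes_R-$ exactness and the cokernel identification intact.

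For $(1)\Rightarrow(2)$, I would quote Lemma \ref{4.1} directly: a module $M\in\mathcal{A}_C(R)$ admits a $C\otimes_R-$ exact sequence $\cdots\ra P_1\ra P_0\ra U^0\ra U^1\ra\cdots$ with each $P_i$ projective and each $U^i$ a $C$-injective module, and with $M\cong\coker(P_1\ra P_0)$. Since projective modules lie in $\mathcal{P}(R)_{\leq n}$ and $C$-injective modules lie in $\mathcal{I}_C(R)_{\leq n}$ for every $n\geq 0$, this is already a sequence of the type demanded in (2).

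The only implication with real content is $(5)\Rightarrow(1)$, and here the idea is to recognize $M$ as sitting in $[\mathcal{A}_C(R)]^2$ and invoke Theorem \ref{4.7}. Corollary \ref{2.3} shows that every module in $\mathcal{WF}(R)_{\leq n}$ lies in $\mathcal{A}_C(R)$, and Proposition \ref{3.3}(2) shows the same for $\mathcal{WI}_C(R)_{\leq n}$. Consequently each term $G_i$ of the given $C\otimes_R-$ exact sequence in (5) lies in $\mathcal{A}_C(R)$, and the identification $M\cong\coker(G_1\ra G_0)$ places $M$ in $[\mathcal{A}_C(R)]^2$ by definition. Theorem \ref{4.7} then yields $M\in[\mathcal{A}_C(R)]^2=\mathcal{A}_C(R)$, closing the cycle.

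The argument is essentially a reduction, so there is no substantial obstacle; the only point requiring care is to confirm that the two ingredients feeding into (5)$\Rightarrow$(1), namely $\mathcal{WF}(R)_{\leq n}\subseteq\mathcal{A}_C(R)$ and $\mathcal{WI}_C(R)_{\leq n}\subseteq\mathcal{A}_C(R)$, are indeed available (they are supplied by Corollary \ref{2.3} and Proposition \ref{3.3}(2) respectively), and to check that the hypothesized $C\otimes_R-$ exactness in (5) is exactly what the definition of $[\mathcal{A}_C(R)]^2$ requires, so that Theorem \ref{4.7} applies without further work.
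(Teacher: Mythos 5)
Your proposal is correct and follows exactly the route the paper intends: the paper presents this corollary as an immediate application of Theorem \ref{4.7}, with (1)$\Rightarrow$(2) supplied by Lemma \ref{4.1}, the middle implications by class containments, and the return to (1) by observing that all terms lie in $\mathcal{A}_C(R)$ (via Corollary \ref{2.3} and Proposition \ref{3.3}(2)), so that $M\in[\mathcal{A}_C(R)]^2=\mathcal{A}_C(R)$. Your write-up just makes explicit what the paper leaves implicit, with the same key ingredients.
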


\begin{corollary}\label{4.10}  {The following statements are equivalent for a module $N\in\Mod S$.

$\mathrm{(1)}$ $N\in \mathcal{B}_C(S)$.

$\mathrm{(2)}$  There exists a $\Hom_S(C,-)$ exact
exact sequence $$\cdots\ra V_1\ra V_0\ra V_{-1}\ra \cdots$$ in $\Mod S$ with all $V_i\in
\mathcal{I}(S)_{\leq n}$ or $\mathcal{P}_C(S)_{\leq n}$ such that $N\cong\coker(V_1\ra V_0)$.

$\mathrm{(3)}$  There exists a $\Hom_S(C,-)$ exact
exact sequence $$\cdots\ra V_1\ra V_0\ra V_{-1}\ra\cdots$$ in $\Mod R$ with all $V_i\in\mathcal{I}(S)_{\leq n}$ or $\mathcal{F}_C(S)_{\leq n}$ such that $N\cong\coker(V_1\ra V_0)$.

$\mathrm{(4)}$  There exists a $\Hom_S(C,-)$ exact
exact sequence $$\cdots\ra V_1\ra V_0\ra V_{-1}\ra\cdots$$ in $\Mod R$ with all $V_i\in\mathcal{FI}(S)_{\leq n}$ or $\mathcal{F}_C(S)_{\leq n}$ such that $N\cong\coker(V_1\ra V_0)$.

$\mathrm{(5)}$  There exists a $\Hom_S(C,-)$ exact
exact sequence $$\cdots\ra V_1\ra V_0\ra V_{-1}\ra\cdots$$ in $\Mod S$ with all $V_i\in\mathcal{WI}(S)_{\leq n}$ or $\mathcal{WF}_C(S)_{\leq n}$ such that $N\cong\coker(V_1\ra V_0)$.}
\end{corollary}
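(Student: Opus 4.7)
The plan is to parallel the proof of Corollary 4.9, reducing everything to Theorem 4.8, which identifies every iterate $[\mathcal{B}_C(S)]^n$ with $\mathcal{B}_C(S)$. The work splits into producing the required sequence when $N\in\mathcal{B}_C(S)$ and, conversely, extracting Bass-class membership from any such sequence; the two halves are joined through $[\mathcal{B}_C(S)]^2$.

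The enabling ingredient is the chain of inclusions
\[
\mathcal{I}(S)_{\leq n},\ \mathcal{FI}(S)_{\leq n},\ \mathcal{WI}(S)_{\leq n},\ \mathcal{P}_C(S)_{\leq n},\ \mathcal{F}_C(S)_{\leq n},\ \mathcal{WF}_C(S)_{\leq n}\ \subseteq\ \mathcal{B}_C(S).
\]
For the injective side, the chain $\mathcal{I}(S)\subseteq\mathcal{FI}(S)\subseteq\mathcal{WI}(S)$ together with Theorem \ref{2.2} places every weak injective module in $\mathcal{B}_C(S)$, and Corollary \ref{2.3} lifts this to finite weak injective dimension. For the flat side, $\mathcal{P}_C(S)\subseteq\mathcal{F}_C(S)\subseteq\mathcal{WF}_C(S)$ together with Foxby equivalence and Proposition \ref{2.4}(1) places the base classes in $\mathcal{B}_C(S)$, and Proposition \ref{3.3}(1) extends this to finite $C$-weak flat dimension.

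For (1)$\Rightarrow$(2), Lemma \ref{4.1} directly furnishes a $\Hom_S(C,-)$ exact sequence $\cdots\to W_1\to W_0\to I^0\to I^1\to\cdots$ with $W_i$ $C$-projective, $I^i$ injective, and $N\cong\coker(W_1\to W_0)$; since $\mathcal{P}_C(S)\subseteq\mathcal{P}_C(S)_{\leq n}$ and $\mathcal{I}(S)\subseteq\mathcal{I}(S)_{\leq n}$, this is already (2). The implications (2)$\Rightarrow$(3)$\Rightarrow$(4)$\Rightarrow$(5) are then immediate from the two chains of inclusions recorded above, applied degreewise to the dimension classes.

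For the converse, since the classes appearing in (2), (3), (4) are subsumed by those in (5), it suffices to prove (5)$\Rightarrow$(1). Given the $\Hom_S(C,-)$ exact sequence $\cdots\to V_1\to V_0\to V_{-1}\to\cdots$, relabel the right-hand tail by $V^i:=V_{-1-i}$, so that $\coker(V_1\to V_0)=\im(V_0\to V^0)=\ker(V^0\to V^1)$ by exactness. The preliminary step ensures every $V_i$ and every $V^i$ lies in $\mathcal{B}_C(S)=[\mathcal{B}_C(S)]^1$, so the definition of $[\mathcal{B}_C(S)]^2$ gives $N\in[\mathcal{B}_C(S)]^2$, and Theorem \ref{4.8} collapses this to $N\in\mathcal{B}_C(S)$. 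The only substantive step is the class-inclusion lemma, and this is essentially already collected in Propositions \ref{2.4} and \ref{3.3} and Corollary \ref{2.3}; the remainder of the argument is purely structural.
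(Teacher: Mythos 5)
Your proposal is correct and follows essentially the same route the paper intends: the paper derives Corollary \ref{4.10} ``immediately'' from Theorem \ref{4.8} (stability of $\mathcal{B}_C(S)$), with Lemma \ref{4.1} supplying the sequence for (1)$\Rightarrow$(2) and the earlier membership results (Theorem \ref{2.2}, Corollary \ref{2.3}, Propositions \ref{2.4} and \ref{3.3}) placing all the dimension classes inside $\mathcal{B}_C(S)$, exactly as you spell out.
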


We round off the paper by giving a partial answer to Question 2, which may be viewed as an
illustration of the usefulness of the stability of the Auslander and Bass classes. Before that, recall from \cite{GW12} that a left $R$-module $M$ is called
\textit{Gorenstein FP-injective} if there exists an exact sequence of FP-injective left $R$-modules $\cdots\ra E_1\ra E_0\ra E^0\ra E^1\ra \cdots$ with $M=\ker(E^0\ra E^1)$ such that $\Hom_R(P,-)$ leaves this sequence exact whenever $P$ is a finitely presented left $R$-module of finite projective dimension. \vspace{0.1cm}

\begin{theorem}\label{4.11}    {If ${}_SC_R$ is a faithfully semidualizing bimodule with finite $S$-projective dimension, then every Gorenstein injective modules in $\Mod S$ is in $\mathcal{B}_C(S)$. }
\end{theorem}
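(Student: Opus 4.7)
The plan is to apply the stability theorem (Theorem \ref{4.8}) to the totally acyclic complex of injectives that witnesses a Gorenstein injective module. Let $M\in\Mod S$ be Gorenstein injective, so there exists an exact complex
$$\mathbb{I}:\ \cdots\lra I_1\lra I_0\lra I^0\lra I^1\lra\cdots$$
of injective $S$-modules with $M\cong\ker(I^0\to I^1)$, which remains exact after applying $\Hom_S(E,-)$ for every injective $S$-module $E$. Since $\mathcal{B}_C(S)$ contains all injective $S$-modules (\cite[Lemma 4.1]{HW07}), every term of $\mathbb{I}$ lies in $\mathcal{B}_C(S)=[\mathcal{B}_C(S)]^1$. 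By Theorem \ref{4.8} we have $[\mathcal{B}_C(S)]^2=\mathcal{B}_C(S)$; hence it suffices to prove that $\mathbb{I}$ is $\Hom_S(C,-)$-exact, for then $M\in[\mathcal{B}_C(S)]^2=\mathcal{B}_C(S)$.

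Let $d=\pd_S C<\infty$, and for every $k$ let $N_k$ denote the corresponding cycle of $\mathbb{I}$, yielding short exact sequences
$$0\lra N_{k+1}\lra X_k\lra N_k\lra 0,$$
where $X_k$ is the $k$-th term of $\mathbb{I}$. Since shifts of a totally acyclic complex of injectives are again totally acyclic, each $N_k$ is itself Gorenstein injective. The key claim is that $\Ext^i_S(C,N_k)=0$ for all $i\geq 1$ and all $k$. Granting this, applying $\Hom_S(C,-)$ to each of the above short exact sequences gives again a short exact sequence, and splicing these together recovers the exactness of $\Hom_S(C,\mathbb{I})$.

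The claim is proved by dimension shifting along the bi-infinite complex. Since $X_k$ is injective, $\Ext^{\geq 1}_S(C,X_k)=0$, so the Ext long exact sequence of the displayed short exact sequence yields isomorphisms
$$\Ext^i_S(C,N_k)\cong \Ext^{i+1}_S(C,N_{k+1})\quad\text{for all } i\geq 1.$$
Iterating $d$ times one obtains
$$\Ext^i_S(C,N_k)\cong \Ext^{i+d}_S(C,N_{k+d})=0,$$
because $\pd_S C=d$ forces $\Ext^{>d}_S(C,-)=0$.

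The main obstacle is making sure that this shifting really is available at every step. This relies squarely on the bi-infinite, totally acyclic structure of $\mathbb{I}$: the complex continues forever to the left, every intermediate term is injective, and each syzygy remains Gorenstein injective, so the inductive shift never runs out. Once this is in place, the finite $S$-projective dimension of $C$ supplies the needed vanishing, splicing produces the $\Hom_S(C,-)$-exactness of $\mathbb{I}$, and Theorem \ref{4.8} delivers $M\in\mathcal{B}_C(S)$.
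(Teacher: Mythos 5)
Your proof is correct, but it takes a different route from the paper's. The paper's argument is a two-line reduction: since $C$ has finite $S$-projective dimension and is (super) finitely presented, the defining complex of any Gorenstein FP-injective module (in the sense of \cite{GW12}) is $\Hom_S(C,-)$-exact with terms of finite FP-injective dimension, so Corollary \ref{4.10} puts such modules in $\mathcal{B}_C(S)$; then \cite[Proposition 2.5]{GW12} is quoted to say every Gorenstein injective $S$-module is Gorenstein FP-injective. You instead work directly with the complete injective resolution $\mathbb{I}$ of $M$: all terms are injective, hence in $\mathcal{B}_C(S)$ by \cite[Lemma 4.1]{HW07}, and you prove the $\Hom_S(C,-)$-exactness of $\mathbb{I}$ yourself by dimension shifting along the cycles, using $\Ext^{>d}_S(C,-)=0$ with $d=\pd_S C$, before invoking the stability Theorem \ref{4.8}. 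Both proofs ultimately rest on the stability of $\mathcal{B}_C(S)$ (the paper via Corollary \ref{4.10}, you via Theorem \ref{4.8} directly), but yours is self-contained: it inlines, in the special case of the module $C$, exactly the content the paper outsources to \cite[Proposition 2.5]{GW12}, and it makes transparent where the hypothesis $\pd_S C<\infty$ enters. A small remark: your argument never actually uses the total acyclicity of $\mathbb{I}$ (the $\Hom_S(E,-)$-exactness for injective $E$), nor that the cycles are Gorenstein injective — exactness of $\mathbb{I}$, injectivity of its terms, and the bi-infinite extension to the left already suffice — so those remarks can be dropped without loss.
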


\begin{proof}  Let ${}_SC_R$ be faithfully semidualizing with finite $S$-projective dimension. Then it is (super) finitely presented, and so every Gorenstein FP-injective module in the sense of \cite{GW12} belongs to $\mathcal{B}_C(S)$ by Corollary \ref{4.10}.
It follows that all Gorenstein injective modules in $\Mod S$ are in $\mathcal{B}_C(S)$ by \cite[Proposition 2.5]{GW12}.\end{proof}
\vspace{0.4cm}


\begin{thebibliography}{99}

\bibitem{ATY05}T. Araya, R. Takahashi, and Y. Yoshino, Homological invariants associated to semi-dualizing bimodules, J. Math. Kyoto Univ. {\bf 45}(2005), 287--306.

\bibitem{BGO16a}D. Bennis, J. R. Garc\'{i}a Rozas, and L. Oyonarte,
Relative Gorenstein dimensions, Mediterr. J. Math. {\bf 13}(2016),
65--91.

\bibitem{BGO16b}D. Bennis, J. R. Garc\'{i}a Rozas, and L. Oyonarte,   Relative
projective and injective dimensions, Comm. Algebra {\bf 44}(2016),
3383--3396.

\bibitem{BGO16c}D. Bennis, J. R. Garc\'{i}a Rozas, and L. Oyonarte,   When do Foxby
classes coincide with modules of finite Gorenstein dimension?,
Kyoto J. Math. {\bf 56}(2016), 785--802.

\bibitem{BM07}D. Bennis and N. Mahdou,  Strongly Gorenstein projective, injective, and flat modules,  J. Pure Appl. Algebra, {\bf 210}(2007), 437--445.

\bibitem{BGH14}D. Bravo, J. Gillespie, and M. Hovey,  The stable modules category of a general ring, arXiv:1405.5768v1.

\bibitem{Br82} K. S. Brown,  Cohomology of Groups, Springer-Verlag, New York, 1982.

\bibitem{Ch01}L. W. Christensen,  Semi-dualizing complexes and their Auslander categories, Trans.
Amer. Math. Soc. {\bf 353}(2001), 1839--1883.

\bibitem{EH09}E. E. Enochs and H. Holm, Cotorsion pairs asociated with Auslander categories, Israel J. Math. {\bf 174} (2009), 253--268.

\bibitem{EJ00}E. E. Enochs and O. M. G. Jenda,  Relative Homological
Algebra,  de Gruyter Expositions in Math. {\bf 30}, Walter de
Gruyter GmbH \& Co. KG, Berlin, 2000.

\bibitem{EJL05}E. E. Enochs, O. M. G. Jenda, and J. A. L\'{o}pez-Ramos,  Dualizing modules and $n$-perfect rings, Proc. Edinb. Math. Soc. {\bf 48}(2005), 75--90.

\bibitem{EL02}E. E. Enochs and J. A. L\'{o}pez-Ramos,  Kaplansky classes, Rend. Sem. Mat. Univ.
Padova {\bf 107}(2002), 67--79.

\bibitem{Fo73}H.-B. Foxby,   Gorenstein modules and related modules, Math. Scand. {\bf 31}(1973), 267--284.

\bibitem{GH15a}Z. H. Gao and Z. Y. Huang,  Weak injective covers and dimension of modules, Acta Math. Hungar. {\bf 147}(2015), 135--157.

\bibitem{GW12}Z. H. Gao and F. G. Wang, Coherent rings and Gorenstein FP-injective modules, Comm. Algebra {\bf 40}(2012), 1669--1679.

\bibitem{GW15}Z. H. Gao and F. G. Wang,  Weak injective and weak flat modules, Comm. Algebra {\bf 43}(2015), 3857--3868.

\bibitem{Go84}E. S. Golod,   G-dimension and generalized perfect ideals, Trudy Mat. Inst. Steklov {\bf 165}(1984), 62--66.

\bibitem{GT12}R. G\"{o}bel and J. Trlifaj,  Approximations and Endomorphism Algebras of Modules, de Gruyter
Expositions in Math. {\bf 41}, 2nd revised and extended edition,
Walter de Gruyter GmbH \& Co. KG, Berlin-Boston 2012.

\bibitem{HJ06}H. Holm and P. J{\o}rgensen,  Semidualizing modules and related Gorenstein
homological dimensions. J. Pure Appl. Algebra {\bf 205}(2006),
423--445.

\bibitem{HJ08}H. Holm and P. J{\o}rgensen,  Covers, precovers and purity,  Illinois J. Math. {\bf 52}(2008), 691--703.

\bibitem{HW07}H. Holm and D. White,  Foxby equivalence over associative rings, J. Math. Kyoto Univ. {\bf 47}(2007), 781--808.

\bibitem{Hu13}Z. Y. Huang,  Proper resolutions and Gorenstein categories, J. Algebra {\bf 393}(2013), 142--169.

\bibitem{HM09}L. Hummel and T. Marley,  The Auslander-Bridger formula and the Gorenstein property for coherent rings. J. Commut. Algebra {\bf
1}(2009), 283-314.

\bibitem{Ro79}J. J. Rotman,   An Introduction to Homological Algebra,
Academic Press, New York, 1979.

\bibitem{SSW}S. Sather-Wagstaff,  {Semidualizing Modules,} \url{https://www.ndsu.edu/pubweb/~ssatherw/DOCS/survey.pdf}.

\bibitem{SSW08}S. Sather-Wagstaff, T. Sharif, and D. White,  Stability of Gorenstein categories, J. London Math. Soc. {\bf 77}(2008), 481--502.

\bibitem{TW10}R. Takahashi and D. White,  Homological aspects of semidualizing modules,  Math. Scand. {\bf 106}(2010), 5--22.

\bibitem{Va74}W. V. Vasconcelos,  Divisor Theory in Module Categories, North-Holland Mathematics
Studies, Vol. 14, North-Holland Publ. Co., Amsterdam, 1974.
\end{thebibliography}
\end{document}